\title{The $p$-Laplacian in thin channels with \\ locally periodic rough boundaries}
\newtheorem{theorem}{Theorem}[section]
\newtheorem{corollary}{Corollary}[theorem]
\newtheorem{lemma}[theorem]{Lemma}
\newtheorem{definition}[theorem]{Definition}
\newtheorem{proposition}[theorem]{Proposition}
\newtheorem{remark}{Remark}[section]
\numberwithin{equation}{section}
\author[1]{Jean Carlos Nakasato\thanks{e-mail: nakasato@ime.usp.br. Partially supported by CNPq scolarship (Brazil).}}
\author[1]{Marcone Corr\^ea Pereira\thanks{e-mail: marcone@ime.usp.br. Partially supported by CNPq 303253/2017-7 and FAPESP 2019/06221-5 (Brazil).}}
\affil[1]{Depto. Matem\'atica Aplicada, IME, Universidade de S\~ao Paulo,
	Rua do Mat\~ao 1010, S\~ao Paulo - SP, Brazil}
\begin{document}
	\maketitle
	\begin{abstract}
		In this work we analyze the asymptotic behavior of the solutions of the $p$-Laplacian equation with homogeneous Neumann boundary conditions set in bounded thin domains as 
		$$R^\varepsilon=\left\lbrace(x,y)\in\mathbb{R}^2:x\in(0,1)\mbox{ and }0<y<\varepsilon G\left(x,{x}/{\varepsilon}\right)\right\rbrace.$$
		
		We take a smooth function $G:(0,1)\times\mathbb{R} \mapsto \mathbb{R}$, $L$-periodic in the second variable, which allows us to consider locally periodic oscillations at the upper boundary. The thin domain situation is established passing to the limit in the solutions as the positive parameter $\varepsilon$ goes to zero. 
	\end{abstract}

	\noindent \emph{Keywords:} $p$-Laplacian, Neumann conditions, Thin domains, Rough boundary, Homogenization. \\
	\noindent 2010 \emph{Mathematics Subject Classification.} 35B25, 35B40, 35J92.
	
	\section{Introduction}
		
	Let $G$ be a function 
	
	\begin{mdframed}[frametitlefont=\bfseries\large,frametitle={(H)},frametitlealignment=\centering]
		$G:(0,1)\times\mathbb{R} \mapsto \mathbb{R}$ satisfying that there exist a finite number of points 
		$$0=\xi_0<\xi_1<\cdots<\xi_{N-1}<\xi_N=1$$ 
		such that $G:(\xi_{i-1},\xi_i)\times\mathbb{R}\to (0,\infty)$ is $C^1$ and such that $G$, $\partial_x G$ and 
		$\partial_y G$ are uniformly bounded in $(\xi_{i-1},\xi_i)\times\mathbb{R}$ getting limits when we approach $\xi_{i-1}$ and $\xi_i$.
		Further, we assume there exist two constants $G_0$ and $G_1$ such that 
		\begin{equation*}  \label{g0}
		0<G_0 \leq G(x,y)\leq G_1, \quad \forall (x,y) \in (0,1)\times\mathbb{R},
		\end{equation*}
		and a real number $L>0$ such that $G(x,y+L)=G(x,y)$ for all $(x,y)\in (0,1)\times\mathbb{R}$.\footnote{$G(x,\cdot)$ is a $L$-periodic function for each $x \in (0,1)$.}
	\end{mdframed}

	We denote by $R^\varepsilon$ the following family of open bounded sets 
	\begin{equation} \label{TDintro}
	R^\varepsilon=\left\lbrace(x,y)\in\mathbb{R}^2:x\in(0,1)\mbox{ and }0<y<\varepsilon G_\varepsilon\left(x\right)\right\rbrace \quad \textrm{ for } \varepsilon>0,
	\end{equation}
	where 
	$$
	G_\varepsilon(x)=G\left(x,\frac{x}{\varepsilon}\right).
	$$

	In this note, we are interested in analyzing the asymptotic behavior of the solutions of a 
	$p$-Laplacian equation posed in the thin domain $R^\varepsilon$ with rough boundary.
	We consider 
	\begin{equation}\label{problem}
	\left\lbrace \begin{gathered}
	-\Delta_p u_\varepsilon +|u_\varepsilon|^{p-2}u_\varepsilon=f^\varepsilon\mbox{ in }R^\varepsilon \\
	|\nabla u_\varepsilon|^{p-2}\nabla u_\varepsilon \eta_\varepsilon =0\mbox{ on }\partial R^\varepsilon
	\end{gathered}\right.
	\end{equation}
	where $\eta_\varepsilon$ is the unit outward normal vector to the boundary $\partial R^\varepsilon$, $1<p<\infty$ with $p^{-1}+p'^{-1}=1$, and 
	$$
	\Delta_p\cdot = \mbox{div }\left(|\nabla \cdot|^{p-2}\nabla\cdot\right)
	$$
	denotes the $p$-Laplacian differential operator. We also assume $f^\varepsilon\in L^{p'}(R^\varepsilon)$ uniformly bounded.

	The variational formulation of \eqref{problem} is given by
	\begin{equation}\label{variationalproblem}
	\int_{R^\varepsilon} \left\{ |\nabla u_\varepsilon|^{p-2}\nabla u_\varepsilon\nabla \varphi +|u_\varepsilon|^{p-2}u_\varepsilon\varphi \right\}dxdy=\int_{R^\varepsilon} f^\varepsilon\varphi \, dxdy, \qquad \varphi \in W^{1,p}(R^\varepsilon).
	\end{equation}
	Existence and uniqueness of the solutions are guaranteed by Minty-Browder's Theorem setting a family of solutions $u_\varepsilon$. 
	We study the asymptotic behavior of $u_\varepsilon$ as $\varepsilon \to 0$, as the domain $R^\varepsilon$ becomes thinner and thinner, also exhibiting a high oscillating boundary at the top due to $L$-periodicity of $G(x,\cdot)$ as illustrated at Figure \ref{fig1}.
	\begin{figure}[htp] 
\centering \scalebox{0.6}{\includegraphics{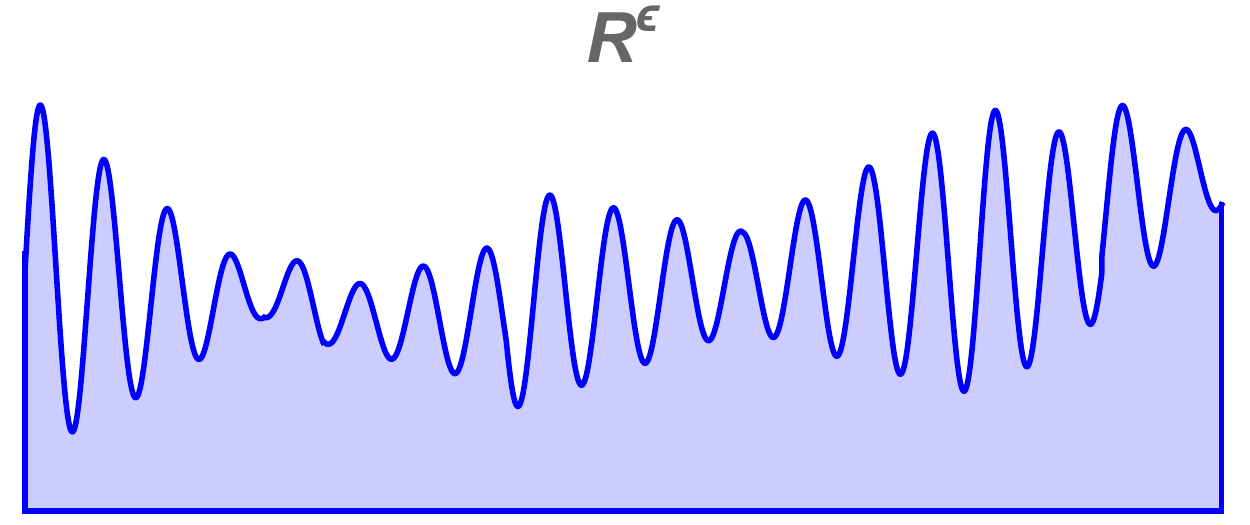}}
\caption{A locally periodic thin channel with rough boundary.}
\label{fig1} 
\end{figure}

	According to \cite{arrieta2011} and references there in, it is expected that the sequence $u_\varepsilon$ will converge to a function of just one variable $x \in (0,1)$ satisfying a one-dimensional equation of the same type.
	Combining boundary perturbation techniques \cite{AP10, AM,AM2} and monotone operator analysis \cite{lindqvist}, we identify the effective limit model of \eqref{variationalproblem} at $\varepsilon=0$. 	
	
	As we will see, the homogenized limit equation is a one-dimensional $p$-Laplacian equation with variable coefficients $q(x)$ and $r(x)$. It assumes the following form
	\begin{equation*} \label{limeq}
	\left\lbrace \begin{gathered}
	- \left(q(x) |u'|^{p-2}u'\right)' +r(x) |u|^{p-2}u=\hat{f} \quad \mbox{ in }(0,1),\\
	u'(0)=u'(1)=0,
	\end{gathered}\right. 
	\end{equation*}
	where
	\begin{equation}\label{coeq}
	\begin{gathered}
	q(x)=\dfrac{1}{L}\int_{Y^*(x)}|\nabla v|^{p-2}\partial_{y_1} v \, dy_1dy_2\\
	r(x)= \dfrac{|Y^*(x)|}{L}
	\end{gathered}
	\end{equation}
	and $|Y^*(x)|$ denotes the Lebesgue measure of the representative cell 
	$$
	Y^*(x)=\left\{(y_1,y_2):0<y_1<L,0<y_2<G(x,y_1) \right\}
	$$
	which also depends on variable $x \in (0,1)$. 
	The function $v$ used to set the homogenized coefficient $q(x)$ in \eqref{coeq} is an auxiliar function which is the unique solution of the problem  
	\begin{equation}\label{auxiliarproblemi}
	\begin{gathered}
	\displaystyle\int_{Y^*(x)}\left|\nabla v\right|^{p-2}\nabla v\nabla \varphi \, dy_1dy_2=0 \quad \forall\varphi\in W^{1,p}_{\#}(Y^*(x)), 
	\qquad \left\langle \varphi \right\rangle_{Y^*(x)} = 0, \\
	(v-y_1)\in  W^{1,p}_{\#}(Y^*(x)) \qquad \textrm{ with } \quad \left\langle (v-y_1) \right\rangle_{Y^*(x)} = 0,
	\end{gathered}
	\end{equation}
	where  
	$$
	W_{\#}^{1,p}(Y^*(x)) = \{ \varphi\in W^{1,p}(Y^*(x)) \, : \,  \varphi |_{\partial_{left} Y^*(x)} = \varphi|_{\partial_{right} Y^*(x)} \}
	$$
	is the space of periodic functions on the horizontal variable $y_{1}$, and 	$\left\langle \varphi\right\rangle_{\mathcal{O}}$ denotes the average of any function $\varphi \in L^1_{loc}(\mathbb{R}^N)$ on measurable sets $\mathcal{O} \subset \mathbb{R}^N$. 
	
	It is worth noticing that problem \eqref{auxiliarproblemi} is well posed for each $x\in(0,1)$, due to Minty-Browder's Theorem, and then, the coefficient $q(x)$ is also well defined. Further, $q(x)$ is a positive function setting a well posed homogenized equation. Indeed, since $v$ is the solution of \eqref{auxiliarproblemi}, there exists $\psi\in W_{\#}^{1,p}(Y^*(x)) $ with $\left\langle \psi\right\rangle_{Y^*(x)}=0$ for each $x \in (0,1)$ such that $v=y_{1}+\psi$ implying 
	$$
	\begin{gathered}
	0<\int_{Y^*(x)}|\nabla v|^{p}\, dy_1dy_2=\int_{Y^*(x)}|\nabla v|^{p-2}\nabla v \nabla (y_{1}+\psi)\, dy_1dy_2\\ 
	=\int_{Y^*(x)}|\nabla v|^{p-2}\partial_{y_1} v \, dy_1dy_2=L \, q(x).
	\end{gathered}
	$$
	
	Several are the works in the literature dealing with issues related to the effect of thickness and rough on the feature of the solutions of partial differential equations. 	
	Indeed, thin structures with oscillating boundaries appear in many fields of science: fluid dynamics (lubrication), solid mechanics (thin rods, plates or shells) or even physiology (blood circulation). Therefore, analyzing the asymptotic behavior of models set on thin structures understanding how the geometry and the roughness affect the problem is a very relevant issues in applied science. 
	In these direction, see for instance \cite{BPazanin,GH,Jimbo,AJM,MZAMP} and references therein. 
	
	Here, we are improving results from \cite{AP10, AM} where the Laplacian operator in locally periodic thin domains were considered dealing with the $p$-Laplacian equation for any $p \in (1, \infty)$. 
	It is worth noticing that the techniques developed in \cite{AP10, AM} can not be directly applied in or case. On one side, the results obtained in \cite{AM} do not guarantee strong convergence in $L^{p}$ for the unfolding operator applied on the solutions of the quasilinear operators. On the other side, the analysis performed in \cite{AP10}  just work on $L^2$-spaces. Our goal here is to overcome this situation. We discretize the oscillating region passing to the limit using uniform estimates on two parameters: one associated to the roughness, and other given by the variable profile of the thin domain. In this way, a continuous dependence property for the solutions with respect to $G$ in $L^p$-norms is crucial.

%	Recently, we have improved results from \cite{arrieta2011, AM2, MRi2} where the Laplacian operator in thin domains were considered. Taking $G(x,y)=g(y)$ for some periodic function $g$, we dealt with the $p$-Laplacian equation in \cite{nakasato1} on purely periodic thin domains set by $G_\varepsilon(x) = g(x/\varepsilon^\alpha)$ with $\alpha>0$. Using the unfolding method for periodic perturbations \cite{AM2}, we have studied different orders of boundary oscillations getting distinguished homogenized equations for weak, resonant and high oscillatory behavior.   
%	
%	
%	
%	In \cite{AM} the authors have adapted the unfolding method for locally periodic thin domains as \eqref{TDintro}. They have improved \cite{AP10} studying the Laplace operator in such kind of open regions also allowing variable period under appropriated conditions.
%	
%	Since we are concerning here to the $p$-Laplacian equation, a quasilinear operator, it is not possible to apply the techniques developed in \cite{AM} directly. The results obtained there do not guarantee strong convergence in $L^{p}$ for the unfolding operator applied on solutions $u_\varepsilon$ of quasilinear operators. Our goal here is to overcome this situation. We follow \cite{AP10} discretizing the oscillating region and passing to the limit using uniform estimates. In this way, a continuous dependence property for the solutions with respect to $G$ in $L^p$-norms is crucial. 
	
	The main result of the paper is the following.
	\begin{theorem}\label{mainthm}
		Let $u_{\varepsilon}$ be the solution of \eqref{problem} with $f^\varepsilon\in L^{p'}(R^\varepsilon)$ uniformly bounded. Suppose that 
		$$
		\hat{f}^\varepsilon(x)=\frac{1}{\varepsilon}\int_0^{\varepsilon G\left(x,\frac{x}{\varepsilon}\right)}f^\varepsilon(x,y)dy
		$$
		satisfies
		$
		\hat{f}^\varepsilon\rightharpoonup\hat{f}\mbox{ weakly in }L^{p'}(0,1).
		$
		
		Then, there exists $u\in W^{1,p}(0,1)$ such that
		$$
		\dfrac{L}{|Y^*(x)|\varepsilon}\int_0^{\varepsilon G\left(x,\frac{x}{\varepsilon}\right)}u_{\varepsilon}(x,y) dy \rightharpoonup u 
		\mbox{ weakly in }L^{p}\left(0,1\right), \quad \textrm{ as } \varepsilon \to 0, 
		$$
		with $u$ satisfying the homogenized equation
		$$
		\int_0^1 \left\lbrace q(x)|u'|^{p-2}u'\varphi' +r(x)\,|u|^{p-2}u\varphi \right\rbrace dx=\int_0^1\hat{f}\varphi dx, \quad \forall \varphi\in W^{1,p}(0,1),
		$$
		where the homogenized coefficients $q(x)$ and $r(x)$ are given by \eqref{coeq}.
	\end{theorem}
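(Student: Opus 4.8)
The plan is to follow the standard homogenization-plus-dimension-reduction scheme, but keeping track of the two scales via a discretization of the $x$-variable so that we can rely on continuous dependence in $L^p$ of the cell problems with respect to the slowly varying profile $G(x,\cdot)$. First I would rescale the thin domain to a fixed reference channel: set $R^\varepsilon \to \Omega^\varepsilon = \{(x,z): x\in(0,1),\ 0<z<G_\varepsilon(x)\}$ via $z=y/\varepsilon$, so that the energy becomes $\int_{\Omega^\varepsilon}\{(|\partial_x u_\varepsilon|^2 + \varepsilon^{-2}|\partial_z u_\varepsilon|^2)^{(p-2)/2}(\dots) + |u_\varepsilon|^{p-2}u_\varepsilon\varphi\}\,\varepsilon\,dx\,dz = \int_{\Omega^\varepsilon} \hat{f}^\varepsilon_{\text{resc}}\varphi\,\varepsilon\,dx\,dz$. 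Testing with $\varphi=u_\varepsilon$ and using the uniform $L^{p'}$ bound on $\hat f^\varepsilon$ gives the a priori estimates $\|u_\varepsilon\|_{L^p(\Omega^\varepsilon)}\le C$, $\|\partial_x u_\varepsilon\|_{L^p(\Omega^\varepsilon)}\le C$, and crucially $\|\varepsilon^{-1}\partial_z u_\varepsilon\|_{L^p(\Omega^\varepsilon)}\le C$, the last forcing the limit to be $z$-independent.

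Next I would introduce the unfolding/oscillation operator adapted to the oscillating boundary (as in \cite{AM,AM2}): partition $(0,1)$ into cells of size $\varepsilon L$, unfold in the fast variable $y_1 = x/\varepsilon \bmod L$, and work on the family of cells $Y^*(x)$. From the a priori bounds one extracts, along a subsequence, a two-scale-type limit: the unfolded gradients converge weakly in $L^p$ to $(\,\partial_x u + \partial_{y_1}\hat u,\ \partial_{y_2}\hat u\,)$ for some $u\in W^{1,p}(0,1)$ and a corrector $\hat u$ periodic in $y_1$ on $Y^*(x)$. The limit variational identity, obtained by unfolding \eqref{variationalproblem} and passing to the limit using the monotonicity/pseudomonotonicity of the $p$-Laplacian (Minty's trick for the nonlinear flux term), splits into: (i) the cell equation, which for a.e.\ $x$ says $w(x,\cdot):=u'(x)\,y_1 + \hat u(x,\cdot)$ solves \eqref{auxiliarproblemi} up to the scaling $v = w/u'(x)$ when $u'(x)\neq 0$ (the case $u'(x)=0$ giving $\hat u=$const by uniqueness), and (ii) the macroscopic equation $\int_0^1\{q(x)|u'|^{p-2}u'\varphi' + r(x)|u|^{p-2}u\varphi\} = \int_0^1 \hat f\varphi$, where $q(x)$ emerges exactly as $\tfrac1L\int_{Y^*(x)}|\nabla v|^{p-2}\partial_{y_1}v$ by homogeneity of the flux ($|\nabla w|^{p-2}\partial_{y_1}w = |u'(x)|^{p-2}u'(x)\,|\nabla v|^{p-2}\partial_{y_1}v$) and $r(x) = |Y^*(x)|/L$ comes from the lower-order term. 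Since the limit problem has a unique solution by Minty--Browder, the whole sequence converges, and one reads off the stated weak convergence of the rescaled vertical averages by noting that the average of $u_\varepsilon$ over the fiber converges to the average of the two-scale limit $u$, weighted by $|Y^*(x)|/L$, which is why the normalizing factor $L/(|Y^*(x)|\varepsilon)$ appears.

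The main obstacle, and the point the paper emphasizes cannot be lifted directly from \cite{AP10,AM}, is justifying the passage to the limit in the nonlinear principal term $|\nabla u_\varepsilon|^{p-2}\nabla u_\varepsilon$: unlike the linear case one does not automatically get strong $L^p$ convergence of the unfolded gradients, so Minty's monotonicity argument must be run carefully at the two-scale level, and one needs the energy convergence $\limsup_\varepsilon \int |\nabla u_\varepsilon|^p \le \int\int |\nabla w|^p$ to upgrade weak to strong convergence of the correctors. I expect this is handled by the announced \emph{continuous dependence of the cell solutions $v=v(x)$ on $G(x,\cdot)$ in $L^p$-norm}: one freezes $G$ on each discretization cell (replacing $G(x,x/\varepsilon)$ by $G(\xi_k^\varepsilon, x/\varepsilon)$ on the $k$-th macro-interval), solves the genuinely periodic problem there, controls the discretization error uniformly in both the roughness scale and the profile-variation scale, and then sends first $\varepsilon\to0$ and then the macro-mesh to $0$. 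The regularity hypotheses in (H) — piecewise $C^1$ with uniform bounds on $G,\partial_xG,\partial_yG$ and finitely many jump points $\xi_i$ — are precisely what make this freezing argument and the continuity estimate go through, so I would state and prove that continuity lemma first and then feed it into the compactness/Minty step.
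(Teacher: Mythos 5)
Your overall architecture --- freeze $G$ in the slow variable on a fine macro-mesh, homogenize the resulting piecewise periodic problems by unfolding plus Minty's monotonicity trick, then remove the discretization by a continuity estimate, sending $\varepsilon\to0$ first and the mesh size second --- is exactly the paper's strategy (Sections \ref{secpiecewise} and \ref{secgencase}), and your identification of the cell problem and of $q(x)$ via homogeneity of the flux matches the proof of Theorem \ref{thmpiecewise}. You are also right that a direct two-scale argument in the genuinely locally periodic setting is unavailable: the locally periodic unfolding operator only gives weak $L^p$ convergence of $T^{lp}_\varepsilon u_\varepsilon$ with no usable gradient information (Theorem \ref{locallyperiodicconvergence} and the remark after it), which is why everything must be routed through the piecewise periodic case.

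The gap is in which continuity lemma you propose to close the diagonal argument with. Continuous dependence of the \emph{cell solutions} $v$ on $G$ (the paper's Appendix, Lemma \ref{lemmaappendix}) only yields uniform convergence of the homogenized coefficients $q^\delta\to q$, hence convergence of the one-dimensional limits $u^\delta\to u^*$. It does not control the quantity the diagonal argument actually requires, namely $|||u_\varepsilon-u_{\varepsilon,\delta}|||_{L^p}$ \emph{uniformly in} $\varepsilon$, where $u_{\varepsilon,\delta}$ solves the problem in the piecewise periodic domain $R^{\varepsilon,\delta}$: without that, knowing $u_{\varepsilon,\delta}\to u^\delta$ as $\varepsilon\to0$ and $u^\delta\to u^*$ as $\delta\to0$ says nothing about $u_\varepsilon$ itself. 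Comparing $u_\varepsilon$ and $u_{\varepsilon,\delta}$ is a domain-perturbation problem for the $p$-Laplacian on two \emph{different} thin oscillating domains, and it is the paper's main technical contribution (Theorem \ref{thmapprox} and Remark \ref{corollarycomparison}); it is proved not by any cell-problem argument but by comparing the energy functionals of the two minimizers through the vertical stretching operators $P_{1+\eta}$ of \eqref{operatorcomparison}, with the strong monotonicity inequalities of Proposition \ref{proposicaoplaplacian01} converting the energy gap into quantitative $W^{1,p}$ closeness, and with a separate, more delicate treatment for $1<p<2$ where only a weighted quadratic quantity is directly controlled. As written, your plan substitutes the easy continuity lemma for the hard one; you would need to state and prove this uniform-in-$\varepsilon$ domain-dependence estimate as a separate ingredient before the discretization scheme can be completed.
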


		Notice that our paper also goes a step beyond \cite{MRi2} where the $p$-Laplacian operator is studied in standard thin domains with no oscillatory boundary (as those ones introduced in \cite{HaleRaugel}), and the recent one \cite{nakasato1} where purely periodically thin domains in oscillating boundary has been considered. 

	The paper is organized as follows. In Section \ref{secnot}, we introduce some notations and state some basic results which will be needed in the sequel. In Section \ref{secdomdep}, we prove the continuous dependence of the solutions in $L^p$ spaces with respect to the function $G$ uniformly in $\varepsilon>0$. In Section \ref{secpiecewise}, we perform the asymptotic analysis of \eqref{problem} in piecewise periodic thin domains (that is, in thin domains set by functions $G$ which are piecewise constants in the first variable $x$, and $L$-periodic in the second one). See Figure \ref{figpw} below which illustrates piecewise periodic open sets.
	\begin{figure}[!h]
		\centering
		\includegraphics[width=.45\columnwidth]{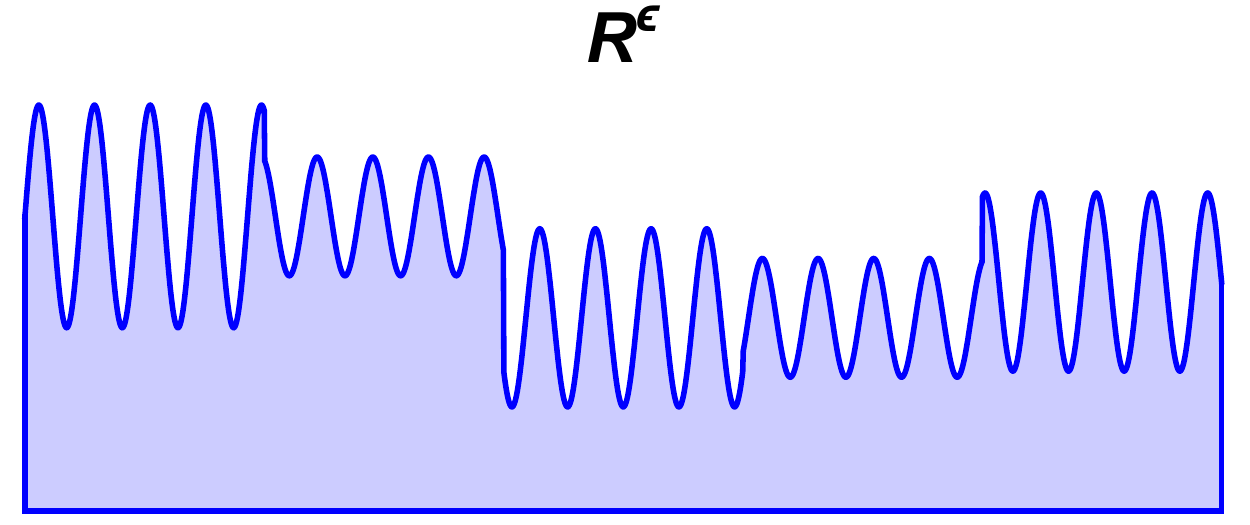} 
		\caption{A piecewise periodic thin domain.}
		\label{figpw} 
	\end{figure} 
	Finally, we provide a proof of the main result in Section \ref{secgencase}, namely Theorem \ref{mainthm}, as a consequence of the analysis performed in the previous sections. Furthermore, we include an Appendix where the dependence of the auxiliary solution $v$ on admissible functions $G$ is analysed.

	\section{Preliminaries}\label{secnot}
	
	In this section, we introduce some basic facts, definitions and results concerning to the unfolding method making some straightforward  adaptations to our propose. First, let us just recall some basic properties to the $p$-Laplacian which can be found for instance in \cite{lindqvist}. 
	\begin{proposition}\label{proposicaoplaplaciano}
		Let $x,y\in\mathbb{R}^n$.
		\begin{itemize}
			\item If $p\geq 2$, then
			\begin{equation*}\label{eq1}
			\langle|x|^{p-2}x-|y|^{p-2}y,x-y\rangle \geq c_p|x-y|^p.
			\end{equation*}
			\item If $1<p<2$, then
			\begin{eqnarray*}\label{eq2}
				\langle|x|^{p-2}x-|y|^{p-2}y,x-y\rangle&\geq&c_p|x-y|^2(|x|+|y|)^{p-2}\nonumber\\ &\geq& c_p|x-y|^2(1+|x|+|y|)^{p-2}.
			\end{eqnarray*}
		\end{itemize}
	\end{proposition}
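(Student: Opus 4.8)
The plan is to handle both cases at once via the mean-value representation of the monotone field $a(\xi):=|\xi|^{p-2}\xi$ along the segment joining the two points, combined with elementary spectral bounds for its Jacobian. Since $a$ is the gradient of the $C^1$ convex potential $\xi\mapsto\frac1p|\xi|^p$, one computes for $\xi\neq0$ that
\[
\langle Da(\xi)h,h\rangle=|\xi|^{p-2}|h|^2+(p-2)|\xi|^{p-4}\langle\xi,h\rangle^2 ,
\]
so that, using $0\le\langle\xi,h\rangle^2\le|\xi|^2|h|^2$, the quadratic form satisfies $\langle Da(\xi)h,h\rangle\ge|\xi|^{p-2}|h|^2$ when $p\ge2$ and $\langle Da(\xi)h,h\rangle\ge(p-1)|\xi|^{p-2}|h|^2$ when $1<p<2$.

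Next I would set $z(t):=(1-t)y+tx$ and write $a(x)-a(y)=\int_0^1 Da(z(t))(x-y)\,dt$. This identity is valid because $|a(\xi)|=|\xi|^{p-1}$ with $p-1>0$, so $t\mapsto a(z(t))$ is absolutely continuous on $[0,1]$ even when the segment meets the origin (where $Da$ is unbounded for $p<2$); the resulting integrand is then absolutely integrable since $|z(t)|^{p-2}$ has at worst an integrable singularity. Pairing with $x-y$ and inserting the bounds above reduces \emph{both} assertions to a lower estimate for the scalar quantity $\int_0^1|z(t)|^{p-2}\,dt$: one gets $\langle a(x)-a(y),x-y\rangle\ge|x-y|^2\int_0^1|z(t)|^{p-2}\,dt$ for $p\ge2$, and the same with an extra factor $(p-1)$ for $1<p<2$.

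For $1<p<2$ this last estimate is immediate. Since $s\mapsto s^{p-2}$ is decreasing on $(0,\infty)$ and $|z(t)|\le(1-t)|y|+t|x|\le|x|+|y|$, we have $|z(t)|^{p-2}\ge(|x|+|y|)^{p-2}$ pointwise, hence $\int_0^1|z(t)|^{p-2}\,dt\ge(|x|+|y|)^{p-2}$. This yields the first inequality of the second bullet with $c_p=p-1$, and the second inequality then follows from $|x|+|y|\le1+|x|+|y|$ together with $p-2<0$ (the degenerate case $x=y$ being trivial).

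For $p\ge2$ one cannot bound $|z(t)|$ from below pointwise, so I would instead project onto $u:=(x-y)/|x-y|$: then $|z(t)|\ge|\langle z(t),u\rangle|$ and $t\mapsto\langle z(t),u\rangle$ is affine, with $\langle z(1),u\rangle-\langle z(0),u\rangle=|x-y|$. A change of variables turns $\int_0^1|z(t)|^{p-2}\,dt$ into $\frac1{|x-y|}\int_I|s|^{p-2}\,ds$, where $I$ is an interval of length $|x-y|$. Since $|s|^{p-2}$ is even and nondecreasing on $[0,\infty)$, this integral is smallest when $I$ is centred at the origin, giving $\int_I|s|^{p-2}\,ds\ge\frac{2}{p-1}(|x-y|/2)^{p-1}$ and therefore $\int_0^1|z(t)|^{p-2}\,dt\ge\frac{2^{2-p}}{p-1}|x-y|^{p-2}$; this proves the first bullet with $c_p=2^{2-p}(p-1)^{-1}$. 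The one genuinely non-mechanical step is this interval minimization (an easy rearrangement: the derivative of $\int_a^{a+d}|s|^{p-2}\,ds$ in $a$ vanishes exactly at $a=-d/2$), and the only mildly technical point is the justification of the mean-value representation through the origin; both are routine, so I expect no serious obstacle.
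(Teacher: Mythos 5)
Your argument is correct: the Hessian identity $\langle Da(\xi)h,h\rangle=|\xi|^{p-2}|h|^2+(p-2)|\xi|^{p-4}\langle\xi,h\rangle^2$ gives the lower bounds $|\xi|^{p-2}|h|^2$ (for $p\geq 2$) and $(p-1)|\xi|^{p-2}|h|^2$ (for $1<p<2$), the integral representation along the segment is legitimately justified (the only possible singularity of $|z(t)|^{p-2}$ is at a single parameter value and is integrable since $p-2>-1$), the pointwise bound $|z(t)|^{p-2}\geq(|x|+|y|)^{p-2}$ settles the case $1<p<2$ with $c_p=p-1$, and the projection-plus-rearrangement step yields $\int_0^1|z(t)|^{p-2}dt\geq 2^{2-p}(p-1)^{-1}|x-y|^{p-2}$, hence the case $p\geq2$ with $c_p=2^{2-p}(p-1)^{-1}$. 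The paper itself gives no proof of this proposition, referring instead to Lindqvist's notes, and your proof is essentially the standard argument found there (mean-value representation of $\xi\mapsto|\xi|^{p-2}\xi$ combined with spectral bounds on its Jacobian), so there is nothing to reconcile beyond the routine justification you already supplied.
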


	\begin{corollary}\label{corolarioplaplaciano}
		Let $a_{p}:\mathbb{R}^n\rightarrow \mathbb{R}^n$ such that $a_{p}(s)=|s|^{p-2}s$, $\frac{1}{p}+\frac{1}{p'}=1$. Then, $a_{p}$ is the inverse of $a_{p'}$. Moreover,
		\begin{itemize}
			\item If $1<p'< 2$ (i.e, $p\geq 2$), then
			\begin{equation*}
			\left||u|^{p'-2}u-|v|^{p'-2}v\right|\leq c|u-v|^{p'-1}.
			\end{equation*}
			\item If $p'\geq 2$ (i.e, $1<p<2$), then
			\begin{eqnarray*}
				\left||u|^{p'-2}u-|v|^{p'-2}v\right|&\leq& c|u-v|(|u|+|v|)^{p'-2}\nonumber\\
				&\leq& c|u-v|(1+|u|+|v|)^{p'-2}.
			\end{eqnarray*}
		\end{itemize}
	\end{corollary}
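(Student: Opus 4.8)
The plan is to derive the whole statement from Proposition~\ref{proposicaoplaplaciano} applied with $p$ replaced by $p'$, the only arithmetic input being the conjugacy identity $(p-1)(p'-1)=1$, which follows from $\frac1p+\frac1{p'}=1$ and which I will use repeatedly.

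\emph{Step 1 (the inverse claim).} For $s\neq 0$ I would compute directly
$a_p(a_{p'}(s))=\bigl||s|^{p'-2}s\bigr|^{p-2}|s|^{p'-2}s=|s|^{(p'-1)(p-2)+(p'-2)}\,s$, and observe that the exponent equals $(p'-1)(p-1)-1=0$. Together with $a_p(0)=a_{p'}(0)=0$ this gives $a_p\circ a_{p'}=\mathrm{id}$, and interchanging the roles of $p$ and $p'$ gives $a_{p'}\circ a_p=\mathrm{id}$.

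\emph{Step 2 (case $p'\geq 2$, i.e. $1<p\leq 2$).} Here $a_{p'}$ is $C^1$ off the origin with Jacobian $Da_{p'}(s)=|s|^{p'-2}I+(p'-2)|s|^{p'-4}\,s\otimes s$, whose operator norm is at most $(p'-1)|s|^{p'-2}$. Writing $a_{p'}(u)-a_{p'}(v)=\int_0^1 Da_{p'}\bigl(v+t(u-v)\bigr)(u-v)\,dt$, bounding $|v+t(u-v)|\leq (1-t)|v|+t|u|\leq |u|+|v|$ and using $p'-2\geq 0$ to monotonize the power, I obtain $\left||u|^{p'-2}u-|v|^{p'-2}v\right|\leq (p'-1)|u-v|\,(|u|+|v|)^{p'-2}$. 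The final inequality in the statement is then immediate since $t\mapsto t^{p'-2}$ is nondecreasing on $[0,\infty)$, so $(|u|+|v|)^{p'-2}\leq (1+|u|+|v|)^{p'-2}$.

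\emph{Step 3 (case $1<p'<2$, i.e. $p\geq 2$).} Since $a_{p'}$ is only H\"older near the origin, the mean-value argument of Step 2 fails, so I would use duality instead: put $x=a_{p'}(u)$, $y=a_{p'}(v)$, so that $a_p(x)=u$ and $a_p(y)=v$ by Step 1. Applying the first bullet of Proposition~\ref{proposicaoplaplaciano} (valid because $p\geq 2$) to $x$ and $y$ gives $\langle u-v,\,|u|^{p'-2}u-|v|^{p'-2}v\rangle=\langle a_p(x)-a_p(y),\,x-y\rangle\geq c_p|x-y|^p=c_p\left||u|^{p'-2}u-|v|^{p'-2}v\right|^{p}$. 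Cauchy--Schwarz on the left-hand side yields $|u-v|\,\left||u|^{p'-2}u-|v|^{p'-2}v\right|\geq c_p\left||u|^{p'-2}u-|v|^{p'-2}v\right|^{p}$; when the right side is nonzero I divide and raise to the power $\frac1{p-1}=p'-1$ to conclude, the case $|u|^{p'-2}u=|v|^{p'-2}v$ being trivial.

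The main obstacle is Step~3: one must notice that naive differentiation does not work in the range $1<p'<2$ and that the correct move is to reverse the roles via $a_{p'}=a_p^{-1}$ and feed the points $a_{p'}(u),a_{p'}(v)$ into the monotonicity estimate for $a_p$. Everything else is routine bookkeeping of which conjugate exponent appears where, governed by the single identity $(p-1)(p'-1)=1$.
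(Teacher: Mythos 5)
Your proof is correct. Note, however, that the paper does not prove this corollary at all: it is stated in the preliminaries as a known fact, with the reader referred to \cite{lindqvist}, so there is no internal argument to compare with. Your derivation is a clean, self-contained version of the standard one: the exponent bookkeeping in Step 1 via $(p-1)(p'-1)=1$ is exactly right; Step 2 is the usual mean-value estimate, where it is worth remarking explicitly that for $p'\geq 2$ the map $a_{p'}$ is $C^1$ on all of $\mathbb{R}^n$ (the Jacobian formula extends continuously, with value $0$ at the origin when $p'>2$ and $I$ when $p'=2$), so the segment joining $u$ and $v$ possibly passing through the origin causes no trouble; and Step 3 is the key point, where you correctly observe that differentiation fails for $1<p'<2$ and instead reduce to the strong monotonicity inequality of Proposition \ref{proposicaoplaplaciano} for the conjugate exponent $p\geq 2$ applied at $x=a_{p'}(u)$, $y=a_{p'}(v)$, followed by Cauchy--Schwarz and the identity $\frac{1}{p-1}=p'-1$. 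This duality trick is precisely how the H\"older-type estimate for the singular range is usually obtained in the literature the paper cites, so your route buys a complete proof of a statement the paper leaves as a black box, at no extra cost in generality.
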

	
	\begin{proposition}\label{proposicaoplaplacian01}
		Let $x,y\in\mathbb{R}^n$ and $p\geq1$. Then.
		\begin{equation*}
		|y|^p\geq |x|^p+p|x|^{p-2}x\cdot(y-x)
		\end{equation*}
		Moreover, 
		\begin{eqnarray*}
			&&|y|^p\geq |x|^p+p|x|^{p-2}x\cdot(y-x)+c_p|y-x|^p\mbox{ if }p\geq 2,\\
			&&|y|^p\geq |x|^p+p|x|^{p-2}x\cdot(y-x)+c_p|x-y|^2(1+|x|+|y|)^{p-2}\mbox{ if }1<p<2.
		\end{eqnarray*}
	\end{proposition}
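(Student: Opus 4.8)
The plan is to derive all three inequalities from a single identity obtained by integrating along the segment joining $x$ to $y$, combined with the monotonicity and strong‑monotonicity estimates for the vector field $a_p(z)=|z|^{p-2}z$ recorded in Proposition~\ref{proposicaoplaplaciano}.

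First I would recall that $\phi(z):=|z|^p$ is convex on $\mathbb{R}^n$ for every $p\ge 1$ (it is the composition of the nondecreasing convex map $s\mapsto s^p$ on $[0,\infty)$ with the convex map $z\mapsto|z|$), and that for $p>1$ it is continuously differentiable with $\nabla\phi(z)=p\,|z|^{p-2}z$ and $\nabla\phi(0)=0$. Hence the first inequality $|y|^p\ge |x|^p+p|x|^{p-2}x\cdot(y-x)$ is precisely the gradient inequality for a differentiable convex function; for $p=1$ and $x\ne 0$ it is the subgradient inequality, equivalently a direct consequence of Cauchy--Schwarz, since $\tfrac{x}{|x|}\cdot y\le |y|$ and $\tfrac{x}{|x|}\cdot x=|x|$.

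For the refined inequalities I would argue quantitatively. Fix $x,y$, set $x_t:=x+t(y-x)$ for $t\in[0,1]$ and $h(t):=|x_t|^p$. Then $h'(t)=p\,|x_t|^{p-2}x_t\cdot(y-x)$, so the fundamental theorem of calculus gives
$$
|y|^p-|x|^p-p|x|^{p-2}x\cdot(y-x)=p\int_0^1\Big(|x_t|^{p-2}x_t-|x|^{p-2}x\Big)\cdot(y-x)\,dt .
$$
Since $x_t-x=t(y-x)$, for $t>0$ the integrand equals $\tfrac{1}{t}\big(|x_t|^{p-2}x_t-|x|^{p-2}x\big)\cdot(x_t-x)$, which I would bound below using Proposition~\ref{proposicaoplaplaciano}. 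When $p\ge 2$ the first bullet yields an integrand $\ge c_p\,t^{p-1}|y-x|^p$, and integrating in $t$ gives $|y|^p-|x|^p-p|x|^{p-2}x\cdot(y-x)\ge c_p|y-x|^p$ after relabelling the constant. When $1<p<2$ the second bullet gives the lower bound $c_p\,t^2|y-x|^2(1+|x_t|+|x|)^{p-2}$; I would then replace $|x_t|$ using $|x_t|\le\max\{|x|,|y|\}\le |x|+|y|$, so that $1+|x_t|+|x|\le 3(1+|x|+|y|)$ and, since $p-2<0$, $(1+|x_t|+|x|)^{p-2}\ge 3^{p-2}(1+|x|+|y|)^{p-2}$; dividing by $t$ and integrating $\int_0^1 t\,dt=\tfrac12$ produces the claimed bound with a new constant depending only on $p$.

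The argument is elementary, so there is no real obstacle; the only point requiring a little care is the case $1<p<2$, where the weight $(1+|x_t|+|x|)^{p-2}$ must be controlled along the whole segment in terms of $|x|$ and $|y|$ only, which is handled by the crude estimate $|x_t|\le|x|+|y|$ above. One should also note the harmless degeneracies: for $p>1$ the convention $|0|^{p-2}0=0$ makes every inequality valid at $x=0$ (the refined constants satisfying $c_p\le 1$), while the case $p=1$, $x=0$ is excluded since $|x|^{p-2}x$ is then undefined.
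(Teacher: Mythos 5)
Your argument is correct. Note, however, that the paper does not prove this proposition at all: it is listed among the preliminary facts on the $p$-Laplacian and implicitly referred to the literature (Lindqvist's notes), so there is no ``paper proof'' to match. What you supply is a self-contained derivation from Proposition \ref{proposicaoplaplaciano}, which is itself stated in the paper: the first inequality is the gradient (resp.\ subgradient) inequality for the convex function $z\mapsto|z|^p$, and the refined versions follow from the identity
\begin{equation*}
|y|^p-|x|^p-p|x|^{p-2}x\cdot(y-x)=p\int_0^1\bigl(|x_t|^{p-2}x_t-|x|^{p-2}x\bigr)\cdot(y-x)\,dt,\qquad x_t=x+t(y-x),
\end{equation*}
into which you insert the strong-monotonicity bounds of Proposition \ref{proposicaoplaplaciano} with the pair $(x_t,x)$; the scaling $x_t-x=t(y-x)$ then produces the factor $t^{p-1}$ (case $p\ge2$) or $t$ (case $1<p<2$), and the elementary bound $|x_t|\le|x|+|y|$ together with $p-2<0$ controls the weight $(1+|x_t|+|x|)^{p-2}$ from below by a constant multiple of $(1+|x|+|y|)^{p-2}$. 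All steps check out, including the use of the fundamental theorem of calculus (legitimate since $|z|^p$ is $C^1$ for $p>1$, with gradient vanishing at the origin) and your remarks on the degenerate cases $x=0$ and $p=1$. This route buys a proof entirely internal to the paper's stated tools, at the cost of constants $c_p$ that differ from (but are comparable to) the ones obtainable from the standard references.
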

	
	From now on, we use the following rescaled norms 
	\begin{equation*}\label{normslp}
		\begin{gathered}
		\left|\left|\left|\varphi\right|\right|\right|_{L^p(R^\varepsilon)}=\varepsilon^{-1/p}\left|\left|\varphi\right|\right|_{L^p(R^\varepsilon)} \forall\varphi\in L^p(R^\varepsilon), 1\leq p< \infty,\\
		\left|\left|\left|\varphi\right|\right|\right|_{W^{1,p}(R^\varepsilon)}=\varepsilon^{-1/p}\left|\left|\varphi\right|\right|_{W^{1,p}(R^\varepsilon)}\forall\varphi\in W^{1,p}(R^\varepsilon), 1\leq p< \infty.
		\end{gathered}
	\end{equation*} 
	For completeness we may denote $\left|\left|\left|\varphi\right|\right|\right|_{L^\infty(R^\varepsilon)}=\left|\left|\varphi\right|\right|_{L^\infty(R^\varepsilon)}$. 
	
	Next, we get the following uniform bound for the solutions of \eqref{problem}:
	\begin{proposition}\label{uniformlimtation}
		Consider the variational formulation of our problem:
		\begin{equation}\label{1001}
		\int_{R^\varepsilon} \left\{ \left|\nabla u_\varepsilon\right|^{p-2}\nabla u_\varepsilon\nabla\varphi+\left|u_\varepsilon\right|^{p-2} u_\varepsilon\varphi \right\} dxdy = \int_{R^\varepsilon}f^\varepsilon\varphi dxdy, \; \varphi\in W^{1,p}(R^\varepsilon),
		\end{equation}
		where $f^\varepsilon$ satisfies $$\left|\left|\left|f^\varepsilon\right|\right|\right|_{L^{p'}(R^\varepsilon)}\leq c$$ for some positive constant $c$ independent of $\varepsilon>0$.
		Then, 
		\begin{equation*}
		\begin{gathered}
		\left|\left|\left|u_\varepsilon\right|\right|\right|_{L^{p}(R^\varepsilon)}\leq c,\quad \quad 
		\left|\left|\left|u_\varepsilon\right|\right|\right|_{W^{1,p}(R^\varepsilon)}\leq c,\\
		\left|\left|\left|\left|\nabla u_\varepsilon\right|^{p-2}\nabla u_\varepsilon\right|\right|\right|_{L^{p'}(R^\varepsilon)}\leq c.
		\end{gathered}
		\end{equation*}
	\end{proposition}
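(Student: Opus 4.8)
The strategy is the standard energy-estimate argument for monotone operators, carried out in the rescaled norms. The plan is to test the variational identity \eqref{1001} with $\varphi = u_\varepsilon$ itself, which is admissible since $u_\varepsilon \in W^{1,p}(R^\varepsilon)$. This yields
\begin{equation*}
\int_{R^\varepsilon} \left\{ |\nabla u_\varepsilon|^{p} + |u_\varepsilon|^{p} \right\} dxdy = \int_{R^\varepsilon} f^\varepsilon u_\varepsilon \, dxdy,
\end{equation*}
so the left-hand side is exactly $\|u_\varepsilon\|_{W^{1,p}(R^\varepsilon)}^p$. First I would bound the right-hand side by Hölder's inequality, $\left| \int_{R^\varepsilon} f^\varepsilon u_\varepsilon \right| \le \|f^\varepsilon\|_{L^{p'}(R^\varepsilon)} \|u_\varepsilon\|_{L^p(R^\varepsilon)} \le \|f^\varepsilon\|_{L^{p'}(R^\varepsilon)} \|u_\varepsilon\|_{W^{1,p}(R^\varepsilon)}$. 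Multiplying both sides by $\varepsilon^{-1}$ and recognizing the rescaled norms gives $\||u_\varepsilon\||_{W^{1,p}(R^\varepsilon)}^p \le \||f^\varepsilon\||_{L^{p'}(R^\varepsilon)} \, \||u_\varepsilon\||_{W^{1,p}(R^\varepsilon)}$; here one must be slightly careful that the $\varepsilon^{-1}$ distributes correctly, but since $\varepsilon^{-1} = \varepsilon^{-1/p}\cdot\varepsilon^{-1/p'}$ this matches the definitions of $\||\cdot\||_{L^p}$ and $\||\cdot\||_{L^{p'}}$ exactly. Dividing through by $\||u_\varepsilon\||_{W^{1,p}(R^\varepsilon)}^{p-1}$ (trivial if $u_\varepsilon \equiv 0$) yields
\begin{equation*}
\||u_\varepsilon\||_{W^{1,p}(R^\varepsilon)}^{p-1} \le \||f^\varepsilon\||_{L^{p'}(R^\varepsilon)} \le c,
\end{equation*}
hence $\||u_\varepsilon\||_{W^{1,p}(R^\varepsilon)} \le c^{1/(p-1)}$, and since $\||u_\varepsilon\||_{L^p(R^\varepsilon)} \le \||u_\varepsilon\||_{W^{1,p}(R^\varepsilon)}$ the first two bounds follow.

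For the third bound, I would observe that $\big| |\nabla u_\varepsilon|^{p-2}\nabla u_\varepsilon \big| = |\nabla u_\varepsilon|^{p-1}$ pointwise, so that
\begin{equation*}
\big\| |\nabla u_\varepsilon|^{p-2}\nabla u_\varepsilon \big\|_{L^{p'}(R^\varepsilon)}^{p'} = \int_{R^\varepsilon} |\nabla u_\varepsilon|^{(p-1)p'} \, dxdy = \int_{R^\varepsilon} |\nabla u_\varepsilon|^{p} \, dxdy,
\end{equation*}
using $(p-1)p' = p$. Therefore $\big\| |\nabla u_\varepsilon|^{p-2}\nabla u_\varepsilon \big\|_{L^{p'}(R^\varepsilon)} = \|\nabla u_\varepsilon\|_{L^p(R^\varepsilon)}^{p/p'} = \|\nabla u_\varepsilon\|_{L^p(R^\varepsilon)}^{p-1}$. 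Rescaling, $\big\|\big| |\nabla u_\varepsilon|^{p-2}\nabla u_\varepsilon \big|\big\|_{L^{p'}(R^\varepsilon)}$ picks up a factor $\varepsilon^{-1/p'} = \varepsilon^{-(p-1)/p} = (\varepsilon^{-1/p})^{p-1}$, which is precisely the $(p-1)$-th power of the rescaling factor in $\||\nabla u_\varepsilon\||_{L^p(R^\varepsilon)}$; hence $\big\|\big| |\nabla u_\varepsilon|^{p-2}\nabla u_\varepsilon \big|\big\|_{L^{p'}(R^\varepsilon)} = \||\nabla u_\varepsilon\||_{L^p(R^\varepsilon)}^{p-1} \le \||u_\varepsilon\||_{W^{1,p}(R^\varepsilon)}^{p-1} \le c$, as required.

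The argument is essentially bookkeeping, so there is no genuine obstacle; the only point requiring care — and the one I would present most explicitly — is the matching of the powers of $\varepsilon$ in the rescaled norms, i.e. verifying that the conjugacy relation $1/p + 1/p' = 1$ makes the $\varepsilon$-weights on the two sides of Hölder's inequality consistent, and likewise that $(p-1)p' = p$ reconciles the weight on the flux term. Once that is observed, the constant $c$ in the conclusion depends only on the uniform bound for $\||f^\varepsilon\||_{L^{p'}(R^\varepsilon)}$ and on $p$, and in particular is independent of $\varepsilon$.
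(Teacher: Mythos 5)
Your proposal is correct and follows essentially the same route as the paper: testing \eqref{1001} with $\varphi=u_\varepsilon$, applying H\"older's inequality, and dividing by $\left|\left|\left|u_\varepsilon\right|\right|\right|_{W^{1,p}(R^\varepsilon)}^{p-1}$, with the flux bound following from $(p-1)p'=p$. The paper's proof is merely terser; your explicit verification that the powers of $\varepsilon$ match across the conjugate exponents is exactly the bookkeeping the paper leaves implicit.
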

	\begin{proof}
		Take $\varphi=u_\varepsilon$ in \eqref{1001}. Then,
		\begin{eqnarray*}
			\left|\left|u_\varepsilon\right|\right|_{W^{1,p}(R^\varepsilon)}^p  = 
			\int_{R^\varepsilon} \left\{ \left|\nabla u_\varepsilon\right|^p+\left| u_\varepsilon\right|^p \right\} dx dy
			\leq \left|\left|f^\varepsilon\right|\right|_{L^{p'}(R^\varepsilon)} \left|\left|u_\varepsilon\right|\right|_{L^p(R^\varepsilon)}.
		\end{eqnarray*}
		%
		%		One can say that
		%		\begin{equation}
		%		\left|\left|u_\varepsilon\right|\right|_{W^{1,p}(R^\varepsilon)}^p\leq \left|\left|f^\varepsilon\right|\right|\left|\left|u_\varepsilon\right|\right|_{W^{1,p}(R^\varepsilon)}.
		%		\end{equation}
		%
		Hence, 
		\begin{equation*}
		\left|\left|\left|u_\varepsilon\right|\right|\right|_{W^{1,p}(R^\varepsilon)}\leq c.
		\end{equation*}
		
		Therefore, the sequence $u_\varepsilon$ and $|\nabla u_\varepsilon|^{p-2}\nabla u_\varepsilon$, are respectively bounded in $L^p(R^\varepsilon)$ and $(L^{p'}(R^\varepsilon))^2$ under the norm $\left|\left|\left|\cdot\right|\right|\right|$.
	\end{proof}

	\subsection{Unfolding operator}\label{sectionunfolding}
	
	Here, we present the unfolding operator for thin domains in the purely and locally periodic setting. They have been introduced in \cite{AM,AM2} where details an proofs can be found.

	\subsubsection{Purely periodic} \label{subsectionpurelyperiodic}
	
	Let $ G_i:\mathbb{R}\to\mathbb{R}$ be a $L$-periodic function, lower semicontinuous satisfying $0<g_{0,i} \leq G_i(x)\leq g_{1,i}$ with $g_{0,i} = \min_{x \in \mathbb{R}} G_i(x)$ and $g_{1,i} = \sup_{x \in \mathbb{R}} G_i(x)$ for any $i=1, ..., N$. Now consider the thin region 
	$$
	R^\varepsilon_i=\left\lbrace (x,y)\in\mathbb{R}: \xi_{i-1}<x<\xi_i,0<y<\varepsilon G_i(x/\varepsilon)\right\rbrace.
	$$
% 	First, let $G_{i}$ be defined as (see Section \ref{secpiecewise} bellow for more details) and denote
%	$$
%	R^\varepsilon_i=\left\lbrace (x,y)\in\mathbb{R}: \xi_{i-1}<x<\xi_i,0<y<\varepsilon G_i(x/\varepsilon)\right\rbrace,
%	$$
%	where $\xi_{i}$ comes from hypothesis \hyperref[hypG]{(H)}.
	The basic cell associated to $R^\varepsilon_i$ is
	\begin{equation*} 
	Y^*_{i}=\left\lbrace \right(y_1,y_2) \in \mathbb{R}^2 : 0<y_1<L\mbox{ and }0<y_2<G_{i}(y_1)\rbrace.
	\end{equation*}
	
	By 
	$$
	\left\langle \varphi\right\rangle_{\mathcal{O}} := \frac{1}{|\mathcal{O}|} \int_{\mathcal{O}} \varphi(x) \, dx,
	$$
	we denote the average of $\varphi \in L^1_{loc}(\mathbb{R}^2)$ for any open measurable set $\mathcal{O} \subset \mathbb{R}^2$. 
	We also set functional spaces which are defined by periodic functions in the variable $y_1 \in (0,L)$. Namely 
	$$
	\begin{gathered}
	L^p_\#(Y^*_{i}) = \{ \varphi \in L^p(Y^*_{i}) \, : \, \varphi(y_1,y_2) \textrm{ is $L$-periodic in $y_1$ } \}, \\
	L^p_\#\left((0,1)\times Y^*_{i}\right) =  \{ \varphi \in L^p((0,1) \times Y^*_{i}) \, : \, \varphi(x, y_1,y_2) \textrm{ is $L$-periodic in $y_1$ } \}, \\
	W_{\#}^{1,p}(Y^*_{i}) = \{ \varphi\in W^{1,p}(Y^*_{i}) \, : \,  \varphi |_{\partial_{left} Y^*_{i}} = \varphi|_{\partial_{right} Y^*_{i}}\}.
	\end{gathered}
	$$
	
	For each $\varepsilon>0$ and any $x\in (\xi_{i-1},\xi_i)$, there exists an integer denoted by $\left[\frac{x}{\varepsilon}\right]_L$ such that
	\begin{equation*}\label{notationpartition}
	x=\varepsilon  \left[\frac{x}{\varepsilon}\right]_LL+\varepsilon \left\{\frac{x}{\varepsilon }\right\}_L\mbox{ where }\left\{\frac{x}{\varepsilon }\right\}_L\in [0,L).
	\end{equation*}
	We still set
	\begin{equation*}
	I_\varepsilon^{i}=\mbox{ Int }\left(\bigcup_{k=0}^{N_\varepsilon^i}\left[kL\varepsilon ,(k+1)L\varepsilon \right]\right), 
	\end{equation*}
	where $N_\varepsilon^{i}$ is largest integer such that $\varepsilon  L(N_\varepsilon^{i}+1)\leq \xi_{i}$, as well
	$$
	\begin{gathered}
	\Lambda_\varepsilon^{i}=(\xi_{i-1},\xi_{i})\backslash I_\varepsilon^{i}=[\varepsilon  L(N_\varepsilon+1),\xi_{i}), \\
	R^\varepsilon_{0i}=\left\lbrace(x,y)\in \mathbb{R}^2:x\in I_\varepsilon^{i}, 0<y<\varepsilon  G_{i}\left(\frac{x}{\varepsilon}\right)\right\rbrace, \\
	R^\varepsilon_{1i}=\left\lbrace(x,y)\in \mathbb{R}^2:x\in \Lambda_\varepsilon^{i}, 0<y<\varepsilon  G_{i}\left(\frac{x}{\varepsilon}\right)\right\rbrace.
	\end{gathered}
	$$
	
	\begin{definition}
		Let $\varphi$ be a Lebesgue-measurable function in $R^\varepsilon_i$. The unfolding operator $\mathcal{T}_\varepsilon^i$ acting on $\varphi$ is defined as the following function in $(\xi_{i-1},\xi_i)\times Y^*_i$
		\begin{eqnarray*}
			\mathcal{T}^{i}_\varepsilon\varphi(x,y_1,y_2)=\left\lbrace\begin{array}{ll}
				\varphi\left(\varepsilon \left[\frac{x}{\varepsilon}\right]_LL+\varepsilon y_1,\varepsilon y_2\right)\mbox{, for }(x,y_1,y_2)\in I_\varepsilon^i\times Y^*_i,\\
				0, \mbox{ for }(x,y_1,y_2)\in \Lambda_\varepsilon^i\times Y^*_i.
			\end{array}\right.
		\end{eqnarray*}
	\end{definition}
	\begin{proposition}\label{unfoldingproperties}
		The unfolding operator satifies the following properties:
		\begin{enumerate}
			\item $\mathcal{T}_\varepsilon^i$ is linear;
			\item $\mathcal{T}_\varepsilon^i(\varphi\psi)=\mathcal{T}_\varepsilon^i(\varphi)\mathcal{T}_\varepsilon(\psi)$, for all $\varphi$, $\psi$ Lebesgue mesurable in $R^\varepsilon_i$;
			\item $\forall\varphi\in L^p(R^\varepsilon_i)$, $1\leq p\leq \infty$,
			\begin{equation*}
			\mathcal{T}^{i}_\varepsilon(\varphi)\left(x,\left\lbrace\frac{x}{\varepsilon}\right\rbrace_L,\dfrac{y}{\varepsilon}\right)=\varphi(x,y),
			\end{equation*}
			for $(x,y)\in R_{0i}^\varepsilon$.
			\item Let $\varphi$ a Lebesgue mesurable function in $Y^*_i$ extended periodically in the first variable. Then, $\varphi^\varepsilon(x,y)=\varphi\left(\dfrac{x}{\varepsilon},\dfrac{y}{\varepsilon}\right)$ is mesurable in $R^\varepsilon_i$ and
			\begin{equation*}
			\mathcal{T}_\varepsilon^i(\varphi^\varepsilon)(x,y_1,y_2)=\varphi(y_1,y_2),\forall(x,y_1,y_2)\in I_\varepsilon\times Y^*_i.
			\end{equation*} 
			Moreover, if $\varphi\in L^p(Y^*_i)$, then $\varphi^\varepsilon\in L^p(R^\varepsilon_i)$;
			\item If  $f:Y^*_i\times\mathbb{R}^2\to\mathbb{R}^2$ is $L$-periodic in $y_1$ and $\varphi:R^\varepsilon_i\to \mathbb{R}^2$ a mesurable function, then $$\mathcal{T}_\varepsilon^i \left[f\left(\dfrac{\cdot}{\varepsilon},\dfrac{\cdot}{\varepsilon},\varphi\right)\right]=f\left(y_1,y_2, \mathcal{T}_\varepsilon^i\varphi\right)$$
			for $(x,y_1,y_2)\in I_\varepsilon\times Y^*_i$;
			\item Let $\varphi^\varepsilon\in L^1(R^\varepsilon_i)$. Then,
			\begin{eqnarray*}
				& & \frac{1}{L}\int_{(\xi_{i-1},\xi_i)\times Y^*_i}\mathcal{T}_\varepsilon^i(\varphi)(x,y_1,y_2)dxdy_1dy_2=\dfrac{1}{\varepsilon}\int_{R_{0i}^\varepsilon}\varphi(x,y)dxdy\\
				&& \qquad \qquad = \dfrac{1}{\varepsilon}\int_{R^\varepsilon_i}\varphi(x,y)dxdy-\dfrac{1}{\varepsilon} \int_{R_{1i}^\varepsilon}\varphi(x,y)dxdy;
			\end{eqnarray*}
			\item $\forall \varphi \in L^p(R^\varepsilon_i)$, $\mathcal{T}_\varepsilon^i(\varphi)\in L^p\left((\xi_{i-1},\xi_i)\times Y^*_i\right)$, $1\leq p\leq \infty$. Moreover
			\begin{equation*}
			\left|\left|\mathcal{T}_\varepsilon^i(\varphi)\right|\right|_{L^p\left((\xi_{i-1},\xi_i)\times Y^*_i\right)}=\left(\dfrac{L}{\varepsilon}\right)^{\frac{1}{p}}\left|\left|\varphi\right|\right|_{L^p(R_{0i}^\varepsilon)}\leq \left(\dfrac{L}{\varepsilon}\right)^{\frac{1}{p}}\left|\left|\varphi\right|\right|_{L^p(R^\varepsilon_i)}.
			\end{equation*}
			If $p=\infty$,
			\begin{equation*}
			\left|\left|\mathcal{T}_\varepsilon^i(\varphi)\right|\right|_{L^\infty\left((\xi_{i-1},\xi_i)\times Y^*_i\right)}=\left|\left|\varphi\right|\right|_{L^\infty(R_{0i}^\varepsilon)}\leq \left|\left|\varphi\right|\right|_{L^\infty(R_i^\varepsilon)};
			\end{equation*}
			\item $\forall\varphi \in W^{1,p}(R^\varepsilon_i)$, $1\leq p\leq \infty$,
			\begin{equation*}
			\partial_{y_1}\mathcal{T}_\varepsilon^i(\varphi)=\varepsilon \mathcal{T}_\varepsilon^i(\partial_{x}\varphi)\mbox{ e }\partial_{y_2}\mathcal{T}_\varepsilon^i(\varphi)=\varepsilon \mathcal{T}_\varepsilon^i(\partial_{y}\varphi)\mbox{ a.e. in }(\xi_{i-1},\xi_i)\times Y^*_i;
			\end{equation*}
			\item If $\varphi \in W^{1,p}(R^\varepsilon_i)$, then $\mathcal{T}_\varepsilon^i(\varphi)  \in L^p\left((\xi_{i-1},\xi_i);W^{1,p}(Y^*_i)\right)$, $1\leq p\leq \infty$. Besides, for $1\leq p< \infty$, we have 
			$$
			\begin{gathered}
			\left|\left|\partial_{y_1}\mathcal{T}_\varepsilon^i(\varphi)\right|\right|_{L^p\left((\xi_{i-1},\xi_i)\times Y^*_i\right)}=\varepsilon \left(\dfrac{L}{\varepsilon}\right)^{\frac{1}{p}}\left|\left|\partial_{x}\varphi\right|\right|_{L^p(R_{0i}^\varepsilon)}\leq \varepsilon \left(\dfrac{L}{\varepsilon}\right)^{\frac{1}{p}}\left|\left|\partial_{x}\varphi\right|\right|_{L^p(R_i^\varepsilon)}\\
			\left|\left|\partial_{y_2}\mathcal{T}_\varepsilon^i(\varphi)\right|\right|_{L^p\left((\xi_{i-1},\xi_i)\times Y^*_i\right)}=\varepsilon\left(\dfrac{L}{\varepsilon}\right)^{\frac{1}{p}}\left|\left|\partial_{y}\varphi\right|\right|_{L^p(R_{0i}^\varepsilon)}\leq \varepsilon\left(\dfrac{L}{\varepsilon}\right)^{\frac{1}{p}}\left|\left|\partial_{y}\varphi\right|\right|_{L^p(R_i^\varepsilon)}.
			\end{gathered}
			$$
			If $p=\infty$,
			\begin{eqnarray*}
				&&\left|\left|\partial_{y_1}\mathcal{T}_\varepsilon^i(\varphi)\right|\right|_{L^\infty\left((\xi_{i-1},\xi_i)\times Y^*_i\right)}=\varepsilon\left|\left|\partial_{x}\varphi\right|\right|_{L^\infty(R_{0i}^\varepsilon)}\leq \varepsilon\left|\left|\partial_{x}\varphi\right|\right|_{L^\infty(R^\varepsilon_i)}\\
				&&\left|\left|\partial_{y_2}\mathcal{T}_\varepsilon^i(\varphi)\right|\right|_{L^\infty\left((\xi_{i-1},\xi_i)\times Y^*_i\right)}=\varepsilon\left|\left|\partial_{y}\varphi\right|\right|_{L^\infty(R_{0i}^\varepsilon)}\leq \varepsilon\left|\left|\partial_{y}\varphi\right|\right|_{L^\infty(R_i^\varepsilon)}.
			\end{eqnarray*}
			\item Let $\left(\varphi_\varepsilon\right)$ be a sequence in $L^p(R_i^\varepsilon)$, $1<p\leq\infty$ with the norm $\left|\left|\varphi_\varepsilon\right|\right|_{L^p(R_i^\varepsilon)}$ uniformly bounded. 
			Then,
			\begin{equation*}
			\dfrac{1}{\varepsilon}\int_{R_{1i}^\varepsilon}|\varphi_\varepsilon|dxdy\rightarrow 0.
			\end{equation*}
%			\item \label{convergenceplaplacetype}
%			Let $f\in L^p\left((\xi_{i-1},\xi_i);L^p_\#(Y^*_i)\right)$ and extend it periodically in $y_1$-direction defining 
%			\begin{equation} \label{eq448}
%			f^\varepsilon(x,y):=f\left(x,\frac{x}{\varepsilon},\dfrac{y}{\varepsilon}\right)\in L^p(R_i^\varepsilon).
%			\end{equation}
%			Then, 
%			\begin{equation*}
%			\mathcal{T}_\varepsilon^i f^\varepsilon\rightarrow f \mbox{ strongly in }L^p\left((\xi_{i-1},\xi_i)\times Y^*_i\right).
%			\end{equation*}
			\item Let $(\varphi_\varepsilon)$ be a sequence in $L^p(\xi_{i-1},\xi_i)$, $1\leq p<\infty$, such that 
			\begin{equation*}
			\varphi_\varepsilon\rightarrow \varphi\mbox{ strongly in }L^p(\xi_{i-1},\xi_i).
			\end{equation*}
			Then, 
			\begin{equation*}\label{449}
			\mathcal{T}_\varepsilon^i\varphi_\varepsilon\rightarrow \varphi\mbox{ strongly in }L^p\left((\xi_{i-1},\xi_i)\times Y^*_i\right).
			\end{equation*}
		\end{enumerate}
	\end{proposition}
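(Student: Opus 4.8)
\emph{Proof strategy.} All the items are the thin–domain, locally periodic analogues of the classical properties of the periodic unfolding operator, so the plan is to adapt the arguments of \cite{AM,AM2}. The purely algebraic statements (1), (2), (5) and the reconstruction identities (3), (4) are read off directly from the pointwise definition: on $I_\varepsilon^i\times Y_i^*$ the operator $\mathcal{T}_\varepsilon^i$ is precomposition with the affine map $\Phi_\varepsilon^k\colon(x,y_1,y_2)\mapsto(\varepsilon kL+\varepsilon y_1,\varepsilon y_2)$ on the cell where $[x/\varepsilon]_L=k$, so linearity, the product rule and the substitution rule for $L$-periodic integrands are immediate, and feeding $y_1=\{x/\varepsilon\}_L$, $y_2=y/\varepsilon$ into the definition together with $x=\varepsilon[x/\varepsilon]_LL+\varepsilon\{x/\varepsilon\}_L$ reproduces $\varphi(x,y)$. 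The cornerstone is the integral identity (6): I would split $I_\varepsilon^i$ into the pairwise disjoint cells $\varepsilon(kL,(k+1)L)$, $k=0,\dots,N_\varepsilon^i$, change variables by $\Phi_\varepsilon^k$ (Jacobian $\varepsilon^2$) on the part of $R_{0i}^\varepsilon$ above the $k$-th cell — which, by $L$-periodicity of $G_i$, is mapped exactly onto $Y_i^*$ — sum over $k$ and divide by $L$; the second equality in (6) is just the disjoint decomposition $R_i^\varepsilon=R_{0i}^\varepsilon\cup R_{1i}^\varepsilon$, with $R_{1i}^\varepsilon$ the strip over the leftover interval $\Lambda_\varepsilon^i$ on which $\mathcal{T}_\varepsilon^i$ is set to zero.

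The norm identities then follow by feeding suitable integrands into (6): applying it to $|\varphi|^p$ gives (7), and applying it to $|\partial_x\varphi|^p$ and $|\partial_y\varphi|^p$ together with the chain rule for $\Phi_\varepsilon^k$ — which produces precisely the factors $\varepsilon\,\partial_x$ and $\varepsilon\,\partial_y$ appearing in (8) — gives (9), with the $L^\infty$ versions being the evident sup-norm statements. For (10) I would estimate the boundary layer directly: $\Lambda_\varepsilon^i$ has length at most $\varepsilon L$ and $0<G_i\le g_{1,i}$, so $|R_{1i}^\varepsilon|\le\varepsilon^2 Lg_{1,i}$, and Hölder's inequality gives $\varepsilon^{-1}\int_{R_{1i}^\varepsilon}|\varphi_\varepsilon|\le\varepsilon^{-1}|R_{1i}^\varepsilon|^{1/p'}\,\|\varphi_\varepsilon\|_{L^p(R_i^\varepsilon)}$; this tends to $0$ once one uses that in the thin–domain scaling a sequence with bounded rescaled $L^p$-norm on $R_i^\varepsilon$ satisfies $\|\varphi_\varepsilon\|_{L^p(R_i^\varepsilon)}=O(\varepsilon^{1/p})$, making the exponent of $\varepsilon$ equal to $2/p'+1/p-1=1-1/p>0$; for $p=\infty$ the estimate $\varepsilon^{-1}|R_{1i}^\varepsilon|\,\|\varphi_\varepsilon\|_{L^\infty}\le C\varepsilon$ is immediate.

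The one genuinely analytic statement is the last one, (11), and I expect it to be the main obstacle. The plan is as follows. First, viewing $\varphi\in L^p(\xi_{i-1},\xi_i)$ as $y$-independent and noting $\|\varphi\|_{L^p(R_i^\varepsilon)}^p=\int_{\xi_{i-1}}^{\xi_i}|\varphi|^p\,\varepsilon G_i(x/\varepsilon)\,dx\le\varepsilon g_{1,i}\|\varphi\|_{L^p(\xi_{i-1},\xi_i)}^p$, identity (7) shows that $\mathcal{T}_\varepsilon^i$ is bounded from $L^p(\xi_{i-1},\xi_i)$ into $L^p((\xi_{i-1},\xi_i)\times Y_i^*)$ uniformly in $\varepsilon$. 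Then I would split $\mathcal{T}_\varepsilon^i\varphi_\varepsilon-\varphi=\mathcal{T}_\varepsilon^i(\varphi_\varepsilon-\varphi)+(\mathcal{T}_\varepsilon^i\varphi-\varphi)$: by the uniform bound the first term is $O(\|\varphi_\varepsilon-\varphi\|_{L^p(\xi_{i-1},\xi_i)})\to0$, so it suffices to prove $\mathcal{T}_\varepsilon^i\varphi\to\varphi$ for the fixed limit $\varphi$, and using the uniform bound once more this reduces, by density of $C([\xi_{i-1},\xi_i])$ in $L^p$, to the case $\varphi\in C([\xi_{i-1},\xi_i])$. For such $\varphi$ one has, on $I_\varepsilon^i\times Y_i^*$, $\mathcal{T}_\varepsilon^i\varphi(x,y_1,y_2)=\varphi(\varepsilon[x/\varepsilon]_LL+\varepsilon y_1)$ with $|\varepsilon[x/\varepsilon]_LL+\varepsilon y_1-x|=\varepsilon|y_1-\{x/\varepsilon\}_L|\le\varepsilon L$, so uniform continuity forces uniform, hence $L^p$, convergence there, while the contribution of $\Lambda_\varepsilon^i\times Y_i^*$ is controlled by $|Y_i^*|^{1/p}\|\varphi\|_\infty\,|\Lambda_\varepsilon^i|^{1/p}\to0$ since $|\Lambda_\varepsilon^i|\le\varepsilon L$. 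The points requiring care are the uniformity in $\varepsilon$ of the operator bound in the first step (which is exactly what legitimizes the density reduction) and the systematic bookkeeping, throughout (6), (10) and (11), of the defect strip $R_{1i}^\varepsilon$ over $\Lambda_\varepsilon^i$.
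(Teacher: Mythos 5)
The paper offers no proof of this proposition at all --- it is imported from \cite{AM,AM2} with a citation --- so there is nothing internal to compare against. Your argument is the standard one from those references and is correct throughout: items (1)--(9) follow from the cell-by-cell change of variables with Jacobian $\varepsilon^{2}$ together with the constancy of $\mathcal{T}_\varepsilon^i\varphi$ in $x$ over each cell of length $\varepsilon L$ (these two factors combine to give the $L/\varepsilon$ in (6) and hence the $(L/\varepsilon)^{1/p}$ in (7) and (9)), and item (11) follows from the uniform operator bound plus density exactly as you set it up, with the leftover strip over $\Lambda_\varepsilon^i$ handled separately.

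The one point genuinely worth recording is your treatment of item (10). As literally stated (unrescaled norm $\|\varphi_\varepsilon\|_{L^p(R_i^\varepsilon)}$ bounded) the claim is false for $1<p\le 2$: taking $\varphi_\varepsilon=\varepsilon^{-2/p}\chi_{R_{1i}^\varepsilon}$ gives a bounded $L^p$-norm while $\varepsilon^{-1}\int_{R_{1i}^\varepsilon}|\varphi_\varepsilon|\sim\varepsilon^{1-2/p}$ does not vanish (whenever $|\Lambda_\varepsilon^i|$ is of order $\varepsilon$). Your reading of the hypothesis as a bound on the rescaled norm, i.e.\ $\|\varphi_\varepsilon\|_{L^p(R_i^\varepsilon)}=O(\varepsilon^{1/p})$, is the intended one --- it is how the property is actually invoked in Section 4, where the bounds come from Proposition \ref{uniformlimtation} in the $|||\cdot|||$ norms --- and with that reading your exponent count $2/p'+1/p-1=1-1/p>0$ closes the argument.
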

	
	The above result sets several basic and somehow immediate properties of the unfolding operator.
	%The norms \eqref{normslp} will be considered in the sets $R^{\varepsilon}_{i}$.
	%
	Property 6 of Proposition \ref{unfoldingproperties} will be essential to pass to the limit when dealing with solutions of differential equations since it allow us to transform any integral over the thin sets depending on the parameter $\varepsilon$ and  function $G_i$ into an integral over the fixed set $(\xi_{i-1},\xi_i) \times Y^*_i$. %Notice that, in view of this property, we may say that the unfolding operator “almost preserves” the integral of the functions since the “integration defect” arises only from the unique cell which is not completely included in $R_{i}^\varepsilon$ and it is controlled by the integral on $R_{1i}^\varepsilon$. 
	
	\begin{remark} \label{re483} Since $|\cdot|^{p-2}\cdot$ is monotone, we have that
		$\mathcal{T}_\varepsilon^i f^\varepsilon\rightarrow f$ strongly in $L^p\left((\xi_{i-1},\xi_i)\times Y^*_i\right)$ implies  
		\begin{equation*}
		\mathcal{T}_\varepsilon^i \left(|f^\varepsilon|^{p-2}f^\varepsilon\right)\rightarrow |f|^{p-2}f\mbox{ strongly in }L^{p'}\left((\xi_{i-1},\xi_i)\times Y^*_i\right).
		\end{equation*}
		%See \cite{nakasato1}.
	\end{remark}
	\begin{proposition}\label{convergenceplaplacetype}
		Let $f\in L^p\left((0,1);L^p_\#(Y^*)\right)$ and extend it periodically in $y_1$-direction defining 
		\begin{equation*} \label{eq448}
		f^\varepsilon(x,y):=f\left(x,\frac{x}{\varepsilon^\alpha},\frac{y}{\varepsilon}\right)\in L^p(R^\varepsilon).
		\end{equation*}
		Then, 
		\begin{equation*}
		\mathcal{T}_\varepsilon^i f^\varepsilon\rightarrow f \mbox{ strongly in }L^p\left((\xi_{i-1},\xi_i)\times Y^*\right).
		\end{equation*}
	\end{proposition}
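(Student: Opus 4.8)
The statement is the standard fact that unfolding recovers the two‑scale profile of a locally periodic oscillating function, and the plan is to prove it by the usual density argument. First, using the linearity of $\mathcal{T}_\varepsilon^i$ (Proposition~\ref{unfoldingproperties}(1)), I would reduce to a dense subclass of well‑behaved functions, say finite sums $f(x,y_1,y_2)=\sum_k\phi_k(x)\,\psi_k(y_1,y_2)$ with $\phi_k\in C([\xi_{i-1},\xi_i])$ and $\psi_k$ continuous and $L$‑periodic in $y_1$; such functions are dense in $L^p\big((\xi_{i-1},\xi_i);L^p_\#(Y^*)\big)$, and by linearity it suffices to treat a single product $f=\phi\,\psi$.

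For such an $f$ one has $f^\varepsilon(x,y)=\phi(x)\,\psi^\varepsilon(x,y)$ with $\psi^\varepsilon(x,y)=\psi(x/\varepsilon,y/\varepsilon)$, and I would combine three items of Proposition~\ref{unfoldingproperties}: by (2) the unfolding operator is multiplicative, so $\mathcal{T}_\varepsilon^i(f^\varepsilon)=\mathcal{T}_\varepsilon^i(\phi)\,\mathcal{T}_\varepsilon^i(\psi^\varepsilon)$; by (4), $\mathcal{T}_\varepsilon^i(\psi^\varepsilon)=\psi(y_1,y_2)$ on $I_\varepsilon^i\times Y^*$; and by (11), applied to the constant sequence $\varphi_\varepsilon\equiv\phi$, $\mathcal{T}_\varepsilon^i(\phi)\to\phi$ strongly in $L^p\big((\xi_{i-1},\xi_i)\times Y^*\big)$ --- indeed uniformly, since the sampling point $\varepsilon[x/\varepsilon]_LL+\varepsilon y_1$ differs from $x$ by $O(\varepsilon)$ and $\phi$ is uniformly continuous. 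As $\psi$ is bounded, this gives $\mathcal{T}_\varepsilon^i(f^\varepsilon)=\mathcal{T}_\varepsilon^i(\phi)\,\psi\to\phi\,\psi=f$ in $L^p$ over $I_\varepsilon^i\times Y^*$, while on the complementary strip $\Lambda_\varepsilon^i\times Y^*$ one has $\mathcal{T}_\varepsilon^i(f^\varepsilon)\equiv0$ and $\|f\|_{L^p(\Lambda_\varepsilon^i\times Y^*)}\to0$ because $|\Lambda_\varepsilon^i|\to0$. Hence $\mathcal{T}_\varepsilon^i(f^\varepsilon)\to f$ strongly in $L^p\big((\xi_{i-1},\xi_i)\times Y^*\big)$ for every $f$ in the dense subclass.

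To reach a general $f$ I would run the $3\varepsilon$‑argument: given $\delta>0$, pick $\tilde f$ in the dense subclass with $\|f-\tilde f\|<\delta$ and estimate $\|\mathcal{T}_\varepsilon^i(f^\varepsilon)-f\|\le\|\mathcal{T}_\varepsilon^i((f-\tilde f)^\varepsilon)\|+\|\mathcal{T}_\varepsilon^i(\tilde f^\varepsilon)-\tilde f\|+\|f-\tilde f\|$; the middle term tends to $0$ by the previous paragraph, and the first one must be bounded by a quantity vanishing with $\delta$ (at least in the $\limsup$ as $\varepsilon\to0$). Here I would invoke the exact norm identity Proposition~\ref{unfoldingproperties}(7), which after the change of variable $y=\varepsilon t$ reads $\|\mathcal{T}_\varepsilon^i(g^\varepsilon)\|_{L^p((\xi_{i-1},\xi_i)\times Y^*)}^p=L\int_{I_\varepsilon^i}\int_0^{G_i(x/\varepsilon)}|g(x,x/\varepsilon,t)|^p\,dt\,dx$, together with the mean‑value property for integrands periodic in the fast variable, which says that this last quantity converges, as $\varepsilon\to0$, to $\|g\|_{L^p((\xi_{i-1},\xi_i);L^p_\#(Y^*))}^p$; applied with $g=f-\tilde f$, this closes the estimate.

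I expect this mean‑value identity, at the level of merely $L^p$ data, to be the only genuinely non‑formal point and hence the main obstacle. In the locally periodic setting the integrand depends on the macroscopic variable $x$ as well as on the fast variable, so it cannot be controlled by the naive bound $\|\mathcal{T}_\varepsilon^i(g^\varepsilon)\|\lesssim\|g\|$ together with density alone. On the continuous dense subclass it is immediate, since $|g^\varepsilon(x,\cdot)|\le\|g(x,\cdot)\|_{L^\infty(Y^*)}$ pointwise; its transfer to the general case rests on the $L$‑periodicity in the fast variable and on the boundary‑layer estimate Proposition~\ref{unfoldingproperties}(10), and is precisely the kind of property established in the unfolding machinery of \cite{AM,AM2}. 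Granting it, everything else reduces to bookkeeping with the properties collected in Proposition~\ref{unfoldingproperties}.
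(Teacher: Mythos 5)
The paper itself does not prove this proposition --- it is quoted from \cite{nakasato1} --- so the only question is whether your argument stands on its own. Your first two steps (reduction by density and linearity to products $\phi(x)\psi(y_1,y_2)$ with $\phi,\psi$ continuous, then multiplicativity, property (4) for $\mathcal{T}_\varepsilon^i(\psi^\varepsilon)$, property (11) for $\mathcal{T}_\varepsilon^i(\phi)$, and the vanishing of the boundary strip $\Lambda_\varepsilon^i\times Y^*_i$) are correct and are the standard skeleton of such proofs.

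The genuine gap is the step you yourself flag and then defer: the passage from the dense subclass to general $f\in L^p\bigl((0,1);L^p_\#(Y^*)\bigr)$. Your $3\varepsilon$-argument needs, for $g=f-\tilde f$, an estimate of the form $\limsup_{\varepsilon\to0}\|\mathcal{T}_\varepsilon^i(g^\varepsilon)\|_{L^p}\leq C\|g\|_{L^p}$, and you propose to obtain it from the identity $\|\mathcal{T}_\varepsilon^i(g^\varepsilon)\|_{L^p}^p=L\int_{I_\varepsilon^i}\int_0^{G_i(x/\varepsilon)}|g(x,x/\varepsilon,t)|^p\,dt\,dx$ plus a ``mean-value property'' for merely $L^p$ integrands. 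This is not a non-formal detail that transfers by density --- it is false at that level of generality. The set $\{(x,y_1):y_1=x/\varepsilon \bmod L\}$ has two-dimensional Lebesgue measure zero in $(0,1)\times(0,L)$, so for $g$ defined only a.e.\ the composition $g(x,x/\varepsilon,y/\varepsilon)$ is not even well defined; and for any fixed $\varepsilon$ one can choose $g$ supported in a thin neighbourhood of that line so that $\|g\|_{L^p}$ is arbitrarily small while $\int_{I_\varepsilon^i}\int_0^{G_i(x/\varepsilon)}|g(x,x/\varepsilon,t)|^p\,dt\,dx$ is arbitrarily large. Hence the map $g\mapsto\mathcal{T}_\varepsilon^i(g^\varepsilon)$ is not bounded (uniformly in $\varepsilon$ or otherwise) on $L^p\bigl((0,1);L^p_\#(Y^*)\bigr)$, and no $3\varepsilon$-argument can close. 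The correct versions of this proposition require $f$ to be \emph{admissible} --- e.g.\ $f\in L^p\bigl((0,1);C_\#(\overline{Y^*})\bigr)$ or Carath\'eodory, for which $|f^\varepsilon(x,y)|\leq\|f(x,\cdot)\|_{C(\overline{Y^*})}$ gives the needed domination and your argument does go through --- or else they must define $f^\varepsilon$ for general $L^p$ data through a specific approximation procedure rather than by pointwise substitution. To repair your proof you should either add such a hypothesis on $f$ (which suffices for the use made of the proposition once the corrector is itself approximated by smooth functions) or reproduce the precise admissibility framework of \cite{nakasato1}; as written, the final step fails.
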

	\begin{proof}
		See \cite{nakasato1}.
	\end{proof}
	\begin{theorem}\label{thmressonant} \label{thmalpha<1}
		Let $(\varphi_\varepsilon)\subset W^{1,p}(R^\varepsilon_i)$, $1<p<\infty$, with $\left|\left|\left|\varphi_\varepsilon\right|\right|\right|_{W^{1,p}(R^\varepsilon_i)}$ uniformly bounded. 
		Then, there exists $\varphi^i\in W^{1,p}(\xi_{i-1},\xi_i)$ and $\varphi_1^i\in L^p((\xi_{i-1},\xi_i);W^{1,p}_\#(Y^*_i))$ such that (up to a subsequence) 
		\begin{eqnarray*}
			&&\mathcal{T}_\varepsilon^i \varphi_\varepsilon \to \varphi^i\mbox{ strongly in } L^p\left((\xi_{i-1},\xi_i);W^{1,p}(Y^*_i)\right),\\
			&&\mathcal{T}_\varepsilon^i\partial_x\varphi_\varepsilon\rightharpoonup \partial_x \varphi^i+\partial_{y_1}\varphi_1^i\mbox{ weakly in }L^p\left((\xi_{i-1},\xi_i)\times Y^*_i\right),\\
			&&\mathcal{T}_\varepsilon^i\partial_y\varphi_\varepsilon\rightharpoonup \partial_{y_2}\varphi_1^i\mbox{ weakly in }L^p\left((\xi_{i-1},\xi_i)\times Y^*_i\right).
		\end{eqnarray*}
	\end{theorem}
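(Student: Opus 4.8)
The plan is to adapt, to the thin-domain scaling carried by $\mathcal{T}_\varepsilon^i$, the classical gradient-decomposition argument of periodic unfolding. First I would extract the basic bounds and weak limits. From the uniform bound $\||\varphi_\varepsilon\||_{W^{1,p}(R^\varepsilon_i)}\le c$ and items 7 and 9 of Proposition~\ref{unfoldingproperties}, the families $\mathcal{T}_\varepsilon^i\varphi_\varepsilon$, $\mathcal{T}_\varepsilon^i(\partial_x\varphi_\varepsilon)$ and $\mathcal{T}_\varepsilon^i(\partial_y\varphi_\varepsilon)$ are bounded in $L^p((\xi_{i-1},\xi_i)\times Y^*_i)$, while $\partial_{y_1}\mathcal{T}_\varepsilon^i\varphi_\varepsilon=\varepsilon\,\mathcal{T}_\varepsilon^i(\partial_x\varphi_\varepsilon)$ and $\partial_{y_2}\mathcal{T}_\varepsilon^i\varphi_\varepsilon=\varepsilon\,\mathcal{T}_\varepsilon^i(\partial_y\varphi_\varepsilon)$ are of size $O(\varepsilon)$ in $L^p$ (item 8). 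Passing to a subsequence, $\mathcal{T}_\varepsilon^i\varphi_\varepsilon\rightharpoonup\varphi^i$, $\mathcal{T}_\varepsilon^i(\partial_x\varphi_\varepsilon)\rightharpoonup\chi_1$, $\mathcal{T}_\varepsilon^i(\partial_y\varphi_\varepsilon)\rightharpoonup\chi_2$ weakly in $L^p$; since $\nabla_y\mathcal{T}_\varepsilon^i\varphi_\varepsilon\to0$ strongly in $L^p$, the limit $\varphi^i$ does not depend on $(y_1,y_2)$.

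Next I would check that $\varphi^i\in W^{1,p}(\xi_{i-1},\xi_i)$ and upgrade the above to strong convergence. Using a uniform $W^{1,p}$-extension operator $\Pi_\varepsilon$ for the thin domains $R^\varepsilon_i$ (available under hypothesis (H), the top boundary being, after rescaling, a fixed piecewise-$C^1$ graph) together with the dilation $\widehat\varphi_\varepsilon(x,Y):=\Pi_\varepsilon\varphi_\varepsilon(x,\varepsilon Y)$ on the fixed rectangle $(\xi_{i-1},\xi_i)\times(0,g_{1,i})$, one obtains $\widehat\varphi_\varepsilon$ bounded in $W^{1,p}$ with $\partial_Y\widehat\varphi_\varepsilon$ of size $O(\varepsilon)$; hence, along a subsequence, $\widehat\varphi_\varepsilon\rightharpoonup\varphi^i$ in $W^{1,p}$ with $\varphi^i=\varphi^i(x)\in W^{1,p}(\xi_{i-1},\xi_i)$, $\partial_x\widehat\varphi_\varepsilon\rightharpoonup(\varphi^i)'$ in $L^p$, and, by the compact Sobolev embedding, $\widehat\varphi_\varepsilon\to\varphi^i$ strongly in $L^p$. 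Since on $R^\varepsilon_{0i}$ the function $\mathcal{T}_\varepsilon^i\varphi_\varepsilon$ is $\widehat\varphi_\varepsilon$ composed with the cell map $x\mapsto\varepsilon[x/\varepsilon]_LL+\varepsilon y_1$, the part over $R^\varepsilon_{1i}$ being negligible by item 10 of Proposition~\ref{unfoldingproperties}, a one-variable unfolding-compatibility argument in the spirit of item 11 gives $\mathcal{T}_\varepsilon^i\varphi_\varepsilon\to\varphi^i$ strongly in $L^p((\xi_{i-1},\xi_i)\times Y^*_i)$; combined with $\nabla_y\mathcal{T}_\varepsilon^i\varphi_\varepsilon\to0=\nabla_y\varphi^i$, this is precisely strong convergence in $L^p((\xi_{i-1},\xi_i);W^{1,p}(Y^*_i))$.

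Finally I would build the corrector $\varphi_1^i$. Put $Z_\varepsilon:=\varepsilon^{-1}\bigl(\mathcal{T}_\varepsilon^i\varphi_\varepsilon-\langle\mathcal{T}_\varepsilon^i\varphi_\varepsilon\rangle_{Y^*_i}\bigr)$; then $\nabla_yZ_\varepsilon=\bigl(\mathcal{T}_\varepsilon^i(\partial_x\varphi_\varepsilon),\mathcal{T}_\varepsilon^i(\partial_y\varphi_\varepsilon)\bigr)$ is bounded in $L^p$ and $\langle Z_\varepsilon(x,\cdot,\cdot)\rangle_{Y^*_i}=0$, so the Poincaré–Wirtinger inequality on $Y^*_i$ makes $Z_\varepsilon$ bounded in $L^p((\xi_{i-1},\xi_i);W^{1,p}(Y^*_i))$; along a further subsequence $Z_\varepsilon\rightharpoonup Z$, whence $\partial_{y_1}Z=\chi_1$ and $\partial_{y_2}Z=\chi_2$. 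To recognize $Z$ as $L$-periodic in $y_1$ modulo the linear term $y_1(\varphi^i)'$, I would compute the defect of its left/right traces: on the full cells, $Z_\varepsilon(x,L,y_2)-Z_\varepsilon(x,0,y_2)=\varepsilon^{-1}\bigl(\mathcal{T}_\varepsilon^i\varphi_\varepsilon(x,L,y_2)-\mathcal{T}_\varepsilon^i\varphi_\varepsilon(x,0,y_2)\bigr)$ equals the integral over $(0,L)$ of $\partial_x\widehat\varphi_\varepsilon$ pulled back by the cell map, because the right edge of one cell is the left edge of the next; this converges weakly in $L^p$ to $L(\varphi^i)'(x)$, and weak continuity of the trace forces $Z(x,L,y_2)-Z(x,0,y_2)=L(\varphi^i)'(x)$. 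Hence $\varphi_1^i(x,y_1,y_2):=Z(x,y_1,y_2)-y_1(\varphi^i)'(x)$ belongs to $L^p\bigl((\xi_{i-1},\xi_i);W^{1,p}_\#(Y^*_i)\bigr)$, and $\chi_1=\partial_{y_1}Z=\partial_x\varphi^i+\partial_{y_1}\varphi_1^i$, $\chi_2=\partial_{y_2}Z=\partial_{y_2}\varphi_1^i$, as claimed.

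The step I expect to be the main obstacle is this last identification: producing the decomposition $\chi_1=\partial_x\varphi^i+\partial_{y_1}\varphi_1^i$ with a genuinely $L$-periodic corrector $\varphi_1^i$, since it hinges on controlling the periodicity defect of the unfolded sequence across consecutive cells and matching it with the macroscopic derivative; it only becomes transparent once the strong $L^p$ convergence of the previous step — and hence the convergence of the edge traces — is in place. A secondary technical point is that the Poincaré–Wirtinger inequality, the left/right traces and the periodicity condition must all be set on the non-rectangular cell $Y^*_i=\{0<y_1<L,\ 0<y_2<G_i(y_1)\}$, which is where the compatibility $G_i(0)=G_i(L)$ and the regularity in hypothesis (H) are used.
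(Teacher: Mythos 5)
The paper offers no proof of this theorem --- it is quoted from \cite{AM2} (Theorems 3.1 and 4.1) --- so your proposal is a reconstruction of that argument, and the overall strategy (uniform bounds from items 7--9 of Proposition \ref{unfoldingproperties}, weak limits, a rescaled sequence on a fixed domain to identify $\varphi^i\in W^{1,p}(\xi_{i-1},\xi_i)$ and get strong $L^p$ convergence, then the corrector $Z_\varepsilon=\varepsilon^{-1}(\mathcal{T}_\varepsilon^i\varphi_\varepsilon-\langle\mathcal{T}_\varepsilon^i\varphi_\varepsilon\rangle_{Y^*_i})$ controlled by Poincar\'e--Wirtinger together with an analysis of its periodicity defect) is the right one. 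Two steps, however, are genuinely gapped. The first is the extension operator: your justification, ``the top boundary being, after rescaling, a fixed piecewise-$C^1$ graph,'' is false. Under the dilation $(x,y)\mapsto(x,y/\varepsilon)$ the top boundary becomes $Y=G_i(x/\varepsilon)$, which still oscillates with period $\varepsilon L$ and whose slope $G_i'(x/\varepsilon)/\varepsilon$ blows up, so the rescaled domains are neither fixed nor uniformly Lipschitz and no off-the-shelf extension theorem yields $\varepsilon$-uniform bounds. One must either build the extension on the original thin domain by vertical reflection across $y=\varepsilon G_i(x/\varepsilon)$ (whose slope $G_i'(x/\varepsilon)$ \emph{is} uniformly bounded) before dilating, or avoid extension altogether by restricting $\varphi_\varepsilon$ to the non-oscillating rectangle $(\xi_{i-1},\xi_i)\times(0,\varepsilon g_{0,i})$, identifying $\varphi^i$ there, and transferring the convergence to the whole cell via a Poincar\'e inequality in the $y_2$-direction; this last route is the one taken in \cite{AM2}.

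The second, more serious, gap is that your identification of the periodicity defect is circular as written. You compute $Z_\varepsilon(x,L,y_2)-Z_\varepsilon(x,0,y_2)=\int_0^L\mathcal{T}_\varepsilon^i(\partial_x\varphi_\varepsilon)(x,s,y_2)\,ds$ and assert that this converges weakly to $L(\varphi^i)'(x)$. But the weak limit of the right-hand side is $\int_0^L\chi_1(x,s,y_2)\,ds$, which equals $Z(x,L,y_2)-Z(x,0,y_2)$ anyway by the fundamental theorem of calculus applied to the limit $Z$; the computation therefore returns exactly the quantity you are trying to evaluate and gives no information about its value. What is missing is an independent proof that the $y_1$-average of $\chi_1$ is $\partial_x\varphi^i$ --- for instance by testing $\mathcal{T}_\varepsilon^i(\partial_x\varphi_\varepsilon)$ against functions of $x$ alone, unfolding back via item 6 of Proposition \ref{unfoldingproperties}, integrating by parts in $x$ and using the strong convergence $\mathcal{T}_\varepsilon^i\varphi_\varepsilon\to\varphi^i$ --- or a difference-quotient argument showing that $\varepsilon^{-1}\bigl(\widehat\varphi_\varepsilon(x+\varepsilon L,Y)-\widehat\varphi_\varepsilon(x,Y)\bigr)$, being a mollification at scale $\varepsilon L$ of $\partial_x\widehat\varphi_\varepsilon$, converges weakly to $L(\varphi^i)'$, together with a separate justification for evaluating it at the grid points $\varepsilon[x/\varepsilon]_LL$. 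Note also that the horizontal segment at height $y_2$ joining the two lateral edges of $Y^*_i$ leaves the cell whenever $y_2>g_{0,i}$, so even the trace identity requires either the (correctly constructed) extension or a path through the interior of the cell.
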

	\begin{proof}
		See \cite[Theorem 3.1 and 4.1]{AM2} respectively.  
	\end{proof}
	\subsubsection{Locally Periodic Unfolding}\label{subsectionlocallyperiodic}
	
	%In Section \ref{secgencase}, we will prove the main result of this paper. We will also use the locally periodic unfolding operator introduced in \cite{AM}.
	Now, let us set the locally periodic unfolding operator seeing some properties that will be needed in the sequel. 
%	We use the notations given in Subsection \ref{subsectionpurelyperiodic} and the basic variable cell
%	$$
%	Y^*(x)=\left\lbrace (y_1,y_2)\in\mathbb{R}^2:0<y_1<L,0<y_2<G(x,y_1)\right\rbrace, \quad x \in (0,1).
%	$$
	\begin{definition}\label{LPunfoldingdefinition}
		We define the locally periodic unfolding operator $T_\varepsilon^{lp}$ acting on $\varphi$, as the function  $T_\varepsilon^{lp}\varphi$ defined in $(0,1)\times (0,L)\times (0,G_1)$ by expression 
		\begin{equation*}
		T_\varepsilon^{lp}\varphi(x,y_1,y_2)=
		\widetilde{\varphi}\left(\varepsilon\left[\dfrac{x}{\varepsilon}\right]L+\varepsilon y_1,\varepsilon y_2\right)\mbox{ for }(x,y_1,y_2)\in (0,1)\times (0,L)\times (0,G_1),
		\end{equation*}
		where $\widetilde{\cdot}$ denotes the extesion by zero to the whole space.
	\end{definition}
	As in classical periodic homogenization, we have the unfolding operator reflecting two scales. The macroscopic one, denoted by $x$ which gives the position in the interval $(0,1)$, and the microscopic scale given by $(y_1,y_2)$ which sets the position in the cell $(0,L)\times (0,G_1)$. However, due to the locally periodic
	oscillations of the domain $R^\varepsilon$, the definition given here differs from the usual ones. In this case, we do not have a fixed cell that describes the domain $R^\varepsilon$ which makes the extesion by zero needed.	

	\begin{theorem}\label{locallyperiodicconvergence}
		Let $\varphi_\varepsilon\in W^{1,p}(R^\varepsilon)$ for $1<p<\infty$ such that $|||\varphi_\varepsilon|||_{W^{1,p}(R^\varepsilon)}$ is uniformly bounded. Then, there exists $\varphi\in W^{1,p}(0,1)$ such that, up to subsequences,
		$$
		T_\varepsilon^{lp}\varphi_\varepsilon\rightharpoonup \varphi\chi_{(0,1)\times Y^*(x)},
		$$
		weakly in $L^p\left((0,1)\times (0,L)\times (0,G_1)\right)$, 
		where $\chi_{(0,1)\times Y^*(x)}$ is the characteristic function of $(0,1)\times Y^*(x)$.
	\end{theorem}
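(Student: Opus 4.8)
The strategy is to replace $\varphi_\varepsilon$ by a sequence depending only on the longitudinal variable $x$, for which the unfolded limit is transparent, and then transfer the limit back. \emph{Step 1 (a one-dimensional candidate).} Since $G\geq G_0$, the flat strip $(0,1)\times(0,\varepsilon G_0)$ is contained in $R^\varepsilon$, so the fixed-height average
$$
\tilde v_\varepsilon(x)=\frac{1}{\varepsilon G_0}\int_0^{\varepsilon G_0}\varphi_\varepsilon(x,y)\,dy
$$
defines an element of $W^{1,p}(0,1)$ with $\tilde v_\varepsilon'(x)=(\varepsilon G_0)^{-1}\int_0^{\varepsilon G_0}\partial_x\varphi_\varepsilon(x,y)\,dy$. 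H\"older's inequality in $y$ together with $|||\varphi_\varepsilon|||_{W^{1,p}(R^\varepsilon)}\leq c$ yields $\|\tilde v_\varepsilon\|_{W^{1,p}(0,1)}\leq C$; hence, up to a subsequence, $\tilde v_\varepsilon\rightharpoonup\varphi$ weakly in $W^{1,p}(0,1)$ for some $\varphi\in W^{1,p}(0,1)$, and by the compact embedding $W^{1,p}(0,1)\hookrightarrow C([0,1])$ one has $\tilde v_\varepsilon\to\varphi$ uniformly on $[0,1]$, in particular strongly in $L^p(0,1)$. This $\varphi$ is the claimed limit.

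\emph{Step 2 ($\varphi_\varepsilon$ is asymptotically $y$-independent).} On each vertical segment $(0,\varepsilon G_\varepsilon(x))$ the Poincar\'e--Wirtinger inequality holds with constant proportional to the length of the segment, hence to $\varepsilon$, with a constant uniform in $x$ and $\varepsilon$ because $G_0\leq G_\varepsilon(x)\leq G_1$; comparing the mean of $\varphi_\varepsilon(x,\cdot)$ over $(0,\varepsilon G_0)$ with its mean over $(0,\varepsilon G_\varepsilon(x))$, whose difference is controlled by the same Poincar\'e term, one gets $\|\varphi_\varepsilon(x,\cdot)-\tilde v_\varepsilon(x)\|_{L^p(0,\varepsilon G_\varepsilon(x))}\leq C\varepsilon\|\partial_y\varphi_\varepsilon(x,\cdot)\|_{L^p(0,\varepsilon G_\varepsilon(x))}$. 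Raising to the power $p$, integrating in $x$ and rescaling gives $|||\varphi_\varepsilon-\tilde v_\varepsilon|||_{L^p(R^\varepsilon)}\leq C\varepsilon\,|||\partial_y\varphi_\varepsilon|||_{L^p(R^\varepsilon)}\to 0$. Since $T_\varepsilon^{lp}$ is linear and, by a cell-by-cell change of variables exactly as for the operators $\mathcal T_\varepsilon^i$ in Proposition~\ref{unfoldingproperties}, satisfies $\|T_\varepsilon^{lp}\psi\|_{L^p((0,1)\times(0,L)\times(0,G_1))}\leq C\varepsilon^{-1/p}\|\psi\|_{L^p(R^\varepsilon)}$, it follows that $T_\varepsilon^{lp}\varphi_\varepsilon-T_\varepsilon^{lp}\tilde v_\varepsilon\to 0$ strongly in $L^p((0,1)\times(0,L)\times(0,G_1))$. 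It thus suffices to identify the limit of $T_\varepsilon^{lp}\tilde v_\varepsilon$.

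\emph{Step 3 (limit of $T_\varepsilon^{lp}\tilde v_\varepsilon$).} Write $\hat x_\varepsilon=\varepsilon[x/\varepsilon]L+\varepsilon y_1$, so that $|\hat x_\varepsilon-x|<\varepsilon L$. Viewing $\tilde v_\varepsilon$ as a function of $(x,y)$ constant in $y$ and using the $L$-periodicity of $G(x,\cdot)$ (which gives $G_\varepsilon(\hat x_\varepsilon)=G(\hat x_\varepsilon,y_1)$), one has, on the union of the full cells,
$$
T_\varepsilon^{lp}\tilde v_\varepsilon(x,y_1,y_2)=\tilde v_\varepsilon(\hat x_\varepsilon)\,\chi_{\{0<y_2<G(\hat x_\varepsilon,\,y_1)\}},
$$
the remaining partial cell being supported on a set of measure $O(\varepsilon)$, hence negligible in $L^p$. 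Then split
$$
T_\varepsilon^{lp}\tilde v_\varepsilon-\varphi\,\chi_{(0,1)\times Y^*(x)}=\big(\tilde v_\varepsilon(\hat x_\varepsilon)-\varphi(x)\big)\chi_{\{y_2<G(\hat x_\varepsilon,y_1)\}}+\varphi(x)\big(\chi_{\{y_2<G(\hat x_\varepsilon,y_1)\}}-\chi_{\{y_2<G(x,y_1)\}}\big).
$$
The first term goes to $0$ in $L^p$: by the cell-by-cell change of variables it is controlled by $\|\tilde v_\varepsilon-\varphi\|_{L^p(0,1)}^p\to0$ together with $\iint|\varphi(\hat x_\varepsilon)-\varphi(x)|^p\,dy_1\,dx\to0$, since $\varphi$ is uniformly continuous and $\hat x_\varepsilon\to x$ uniformly. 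The second term goes to $0$ in $L^p$ by dominated convergence: $\varphi$ is bounded, and for a.e.\ $(x,y_1,y_2)$ with $x\notin\{\xi_0,\dots,\xi_N\}$ and $y_2\neq G(x,y_1)$ the uniform continuity of $G$ in its first variable on each $(\xi_{i-1},\xi_i)$ (guaranteed by the bound on $\partial_xG$ in hypothesis (H)) gives $G(\hat x_\varepsilon,y_1)\to G(x,y_1)$, so the two indicators eventually coincide. Hence $T_\varepsilon^{lp}\tilde v_\varepsilon\to\varphi\,\chi_{(0,1)\times Y^*(x)}$ strongly in $L^p((0,1)\times(0,L)\times(0,G_1))$, and combining with Step~2 we conclude that $T_\varepsilon^{lp}\varphi_\varepsilon\to\varphi\,\chi_{(0,1)\times Y^*(x)}$ strongly, in particular weakly, in $L^p((0,1)\times(0,L)\times(0,G_1))$, with $\varphi\in W^{1,p}(0,1)$.

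The main obstacle is Step~3: the extension by zero built into $T_\varepsilon^{lp}$ is precisely what produces the characteristic function $\chi_{(0,1)\times Y^*(x)}$ in the limit, and making this rigorous rests on the $L^p$-convergence of the moving indicators $\chi_{\{y_2<G(\hat x_\varepsilon,y_1)\}}$, which is where the $L$-periodicity of $G$ in the fast variable and its piecewise $C^1$ regularity (only finitely many bad cross-sections $x=\xi_i$) are genuinely used. A secondary subtlety is that the naive full-height average $(\varepsilon G_\varepsilon(x))^{-1}\int_0^{\varepsilon G_\varepsilon(x)}\varphi_\varepsilon\,dy$ need not be bounded in $W^{1,p}(0,1)$, since $G_\varepsilon'$ oscillates at order $1/\varepsilon$; this is why the candidate limit $\varphi$ must be extracted from the fixed-height average $\tilde v_\varepsilon$ instead, with the two averages then reconciled through the Poincar\'e estimate of Step~2.
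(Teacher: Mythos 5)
Your argument is correct, but a like-for-like comparison is not really possible: the paper offers no proof of this statement, deferring entirely to \cite{Villanueva2016}, Theorem 2.3.9. The route taken there (and in the purely periodic Theorem \ref{thmressonant}) is compactness-first: one extracts a weak $L^p$ limit of the bounded unfolded sequence and then identifies it as a function of $x$ alone supported in $(0,1)\times Y^*(x)$. You instead construct the limit: the fixed-height average $\tilde v_\varepsilon$ is precompact in $C([0,1])$, the Poincar\'e--Wirtinger inequality in the thin direction shows $|||\varphi_\varepsilon-\tilde v_\varepsilon|||_{L^p(R^\varepsilon)}=O(\varepsilon)$, and the unfolding of a strongly convergent one-dimensional sequence is computed by hand --- your Step 3 is in effect Proposition \ref{propostionlpconvbasic}(2) upgraded to sequences, and the dominated-convergence treatment of the moving indicator $\chi_{\{y_2<G(\hat x_\varepsilon,y_1)\}}$ is exactly where the $L$-periodicity and the finitely many discontinuity cross-sections $x=\xi_i$ of $G$ enter. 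Your closing observation that the full-height average is useless (its derivative involves $G_\varepsilon'=O(1/\varepsilon)$) is the right reason for choosing the fixed-height one. Two remarks. First, your proof actually delivers \emph{strong} $L^p$ convergence, more than the weak convergence asserted; this does not contradict the remark following the theorem in the paper, which concerns the impossibility of obtaining the $L^p(W^{1,p})$-type convergences and corrector identification of Theorem \ref{thmressonant} (the extension by zero destroys $W^{1,p}$ regularity of the unfolded functions in the cell variables), and it is consistent with the strong first convergence listed in that theorem. Second, the only details worth writing out fully are the uniform $L^\infty$ bound on $\tilde v_\varepsilon$ coming from $W^{1,p}(0,1)\hookrightarrow C([0,1])$, which you need to discard the $O(\varepsilon)$-measure partial cell near $x=1$, and the cell-by-cell Riemann-sum computation behind the bound $\|T_\varepsilon^{lp}\psi\|^p_{L^p((0,1)\times(0,L)\times(0,G_1))}\leq C\varepsilon^{-1}\|\psi\|^p_{L^p(R^\varepsilon)}$, since the identity in Proposition \ref{propostionlpconvbasic}(1) implicitly ignores the last incomplete cell.
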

	\begin{proof}
		See \cite{Villanueva2016}, Theorem 2.3.9.
	\end{proof}
	\begin{remark}
		We point out that the convergence above can not be improved because of the definition of locally periodic unfolding operator.
	\end{remark}

	\begin{proposition}\label{propostionlpconvbasic}
		\begin{enumerate}
			\item Let $\varphi\in L^1(R^\varepsilon)$. Then,
			$$
			\dfrac{1}{L}\int_{(0,1)\times (0,L)\times (0,G_1)}T_\varepsilon^{lp}\varphi(x,y_1,y_2) dx dy_1dy_2=\dfrac{1}{\varepsilon}\int_{R^\varepsilon}\varphi(x,y) dxdy.
			$$
			\item Let $\varphi\in L^p(0,1)$. Then,
			$$
			T_\varepsilon^{lp}\varphi\to \chi_{(0,1)\times Y^*(x)}\varphi \mbox{ strongly in }L^p\left((0,1)\times (0,L)\times (0,G_1)\right).
			$$
		\end{enumerate}
	\end{proposition}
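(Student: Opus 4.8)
The plan is to treat the two parts separately, deriving the $\varepsilon$-uniform operator bound from part (1) and using it to run a density argument for part (2).

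For (1), I would imitate property (6) of Proposition \ref{unfoldingproperties}: decompose $(0,1)$ into the macroscopic periodicity cells $C_k=(k\varepsilon L,(k+1)\varepsilon L)$, $k=0,\dots,N_\varepsilon$, together with the incomplete cell $(\varepsilon L(N_\varepsilon+1),1)$. On each $C_k$ the map $x\mapsto T_\varepsilon^{lp}\varphi(x,y_1,y_2)$ is constant, since it depends on $x$ only through $\varepsilon[x/\varepsilon]_L L=k\varepsilon L$; integrating in $x$ first therefore produces the factor $|C_k|=\varepsilon L$, and the substitution $\xi=k\varepsilon L+\varepsilon y_1$, $\zeta=\varepsilon y_2$ (Jacobian $\varepsilon^{2}$) turns $\int_{C_k\times(0,L)\times(0,G_1)}T_\varepsilon^{lp}\varphi$ into $\tfrac L\varepsilon\int_{C_k\times(0,\varepsilon G_1)}\widetilde\varphi$. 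Summing over $k$ and using that $\widetilde\varphi$ vanishes outside $R^\varepsilon\subset(0,1)\times(0,\varepsilon G_1)$ recovers $\tfrac L\varepsilon\int_{R^\varepsilon}\varphi$; the incomplete cell is handled the same way, the extension by zero being precisely what makes this last piece fit — this is the analogue of the $R_{1i}^\varepsilon$ correction appearing in Proposition \ref{unfoldingproperties} and the only bookkeeping point that needs care. Applying (1) to $|\varphi|^p$ and using $|T_\varepsilon^{lp}\varphi|^p=T_\varepsilon^{lp}(|\varphi|^p)$ (valid because $|\cdot|^p$ commutes with the extension by zero) then gives, for $\varphi\in L^p(0,1)$ regarded as a function on $R^\varepsilon$ that is constant in $y$, the $\varepsilon$-uniform estimate $\|T_\varepsilon^{lp}\varphi\|_{L^p((0,1)\times(0,L)\times(0,G_1))}\le(LG_1)^{1/p}\|\varphi\|_{L^p(0,1)}$, since $\int_{R^\varepsilon}|\varphi|^p=\int_0^1|\varphi(x)|^p\varepsilon G_\varepsilon(x)\,dx\le \varepsilon G_1\|\varphi\|_{L^p(0,1)}^p$.

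For (2), I would first prove the convergence for $\varphi\in C([0,1])$ by pointwise analysis. Setting $a_\varepsilon(x,y_1)=\varepsilon[x/\varepsilon]_L L+\varepsilon y_1$, the $L$-periodicity of $G(x,\cdot)$ gives $T_\varepsilon^{lp}\varphi(x,y_1,y_2)=\varphi(a_\varepsilon)\,\chi_{\{0<a_\varepsilon<1\}}\,\chi_{\{y_2<G(a_\varepsilon,y_1)\}}$. Since $|x-a_\varepsilon|<\varepsilon L$, we have $a_\varepsilon\to x$ uniformly in $y_1$; hence $\varphi(a_\varepsilon)\to\varphi(x)$ by continuity, $\chi_{\{0<a_\varepsilon<1\}}\to1$ for a.e.\ $x$, and, using that $G$ is $C^1$ on each strip $(\xi_{i-1},\xi_i)\times\mathbb R$, $G(a_\varepsilon,y_1)\to G(x,y_1)$ for every $x\notin\{\xi_0,\dots,\xi_N\}$, so that $\chi_{\{y_2<G(a_\varepsilon,y_1)\}}\to\chi_{\{y_2<G(x,y_1)\}}$ off the Lebesgue-null set $\{y_2=G(x,y_1)\}\cup(\{\xi_i\}\times\mathbb R^2)$. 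Thus $T_\varepsilon^{lp}\varphi\to\varphi\,\chi_{(0,1)\times Y^*(x)}$ a.e., and since $|T_\varepsilon^{lp}\varphi|\le\|\varphi\|_{L^\infty}\chi_{(0,1)\times(0,L)\times(0,G_1)}\in L^p$, dominated convergence yields strong $L^p$ convergence in the continuous case.

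For general $\varphi\in L^p(0,1)$ I would conclude by density: pick $\varphi_n\in C([0,1])$ with $\varphi_n\to\varphi$ in $L^p(0,1)$ and bound $\|T_\varepsilon^{lp}\varphi-\chi_{(0,1)\times Y^*(x)}\varphi\|_{L^p}$ by the three terms $\|T_\varepsilon^{lp}(\varphi-\varphi_n)\|_{L^p}$, $\|T_\varepsilon^{lp}\varphi_n-\chi_{(0,1)\times Y^*(x)}\varphi_n\|_{L^p}$ and $\|\chi_{(0,1)\times Y^*(x)}(\varphi_n-\varphi)\|_{L^p}$; the outer two are $\le(LG_1)^{1/p}\|\varphi-\varphi_n\|_{L^p(0,1)}$ by the bound from (1) (using $|Y^*(x)|\le LG_1$), hence small uniformly in $\varepsilon$ once $n$ is large, while the middle one tends to $0$ as $\varepsilon\to0$ for each fixed $n$ by the continuous case; an $\varepsilon/3$ argument closes it. I expect the genuine obstacle to be the geometric step in the continuous case — the a.e.\ convergence of the domain characteristic functions $\chi_{\{y_2<G(a_\varepsilon,y_1)\}}$, where one must simultaneously discard the graph $\{y_2=G(x,y_1)\}$ and the finitely many lines $x=\xi_i$ across which $G$ may jump; once this is in place and (1) supplies the uniform operator bound, the rest is routine.
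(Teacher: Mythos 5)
The paper itself does not prove this proposition --- it is imported verbatim from \cite[Propositions 2.2.5 and 2.3.6]{Villanueva2016} --- so your self-contained argument is a genuinely different (and more informative) route than the text's. Your part (2) is correct and complete: the pointwise formula $T_\varepsilon^{lp}\varphi(x,y_1,y_2)=\varphi(a_\varepsilon)\,\chi_{\{0<a_\varepsilon<1\}}\,\chi_{\{y_2<G(a_\varepsilon,y_1)\}}$ (which silently but correctly uses the $L$-periodicity of $G(x,\cdot)$ to replace $G(a_\varepsilon,a_\varepsilon/\varepsilon)$ by $G(a_\varepsilon,y_1)$ --- worth making explicit), the a.e.\ convergence off the null set $\{y_2=G(x,y_1)\}\cup\bigcup_i\{x=\xi_i\}$, dominated convergence on the finite-measure box, and the $\varepsilon/3$ density step driven by the $\varepsilon$-uniform bound $\|T_\varepsilon^{lp}\psi\|_{L^p}\leq (LG_1)^{1/p}\|\psi\|_{L^p(0,1)}$ all hold up.

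The one step that does not close as written is the incomplete cell in part (1). The extension by zero takes care of the overshoot of the unfolded window past $\xi=1$ in the \emph{original} variable, but it does nothing about the deficit in the $x$-measure of the last cell: on $\Lambda_\varepsilon=(\varepsilon L(N_\varepsilon+1),1)$ your computation yields $\tfrac{|\Lambda_\varepsilon|}{\varepsilon L}\cdot\tfrac{1}{\varepsilon}\int_{R^\varepsilon\cap(\Lambda_\varepsilon\times\mathbb{R})}\varphi$ instead of $\tfrac{1}{\varepsilon}\int_{R^\varepsilon\cap(\Lambda_\varepsilon\times\mathbb{R})}\varphi$, and $|\Lambda_\varepsilon|<\varepsilon L$ unless $1/(L\varepsilon)\in\mathbb{N}$. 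So the exact identity of part (1) holds only for such $\varepsilon$; in general there is an error supported on the last cell, which is $o(1)$ for bounded sequences (compare item 11 of Proposition \ref{unfoldingproperties}) and which is the analogue of the explicit correction $\tfrac{1}{\varepsilon}\int_{R_{1i}^\varepsilon}\varphi$ appearing in the purely periodic identity of item 6 there. You should either restrict to $\varepsilon=1/(nL)$ or state (1) with this boundary-cell correction. None of this damages part (2): for nonnegative integrands the defective cell only decreases the left-hand side, so the inequality $\|T_\varepsilon^{lp}\psi\|^p_{L^p}\leq LG_1\|\psi\|^p_{L^p(0,1)}$ --- which is all the density argument uses --- survives intact, as does the application in Proposition \ref{propLPunfolfingconv}.
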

	\begin{proof}
		See \cite[Propositions 2.2.5 and 2.3.6]{Villanueva2016}.
	\end{proof}
	\begin{proposition}\label{propLPunfolfingconv}
		Let $\varphi_\varepsilon\in L^p(R^\varepsilon)$ such that 
		$$
		T_\varepsilon^{lp}\varphi_\varepsilon\rightharpoonup \chi_{(0,1)\times Y^*(x)}\varphi \mbox{ weakly in }L^p\left((0,1)\times (0,L)\times (0,G_1)\right),
		$$
		where $\varphi(x,y_1,y_2)=\varphi(x)$. Then, 
		$$
		\dfrac{L}{\varepsilon}\int_0^{\varepsilon G_\varepsilon(\cdot)}\varphi_\varepsilon(\cdot,y) dy\rightharpoonup |Y^*(\cdot)|\varphi\mbox{ weakly in }L^p(0,1).
		$$ 
	\end{proposition}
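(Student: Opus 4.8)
The plan is to test the claimed limit against an arbitrary $\psi\in L^{p'}(0,1)$ and to reduce everything to the exact mean-value identity of Proposition~\ref{propostionlpconvbasic}(1) together with a weak$\times$strong passage to the limit. First I would record a uniform bound. Since $T_\varepsilon^{lp}\varphi_\varepsilon$ converges weakly, $\|T_\varepsilon^{lp}\varphi_\varepsilon\|_{L^p((0,1)\times(0,L)\times(0,G_1))}$ is bounded; the locally periodic unfolding operator is multiplicative (immediate from Definition~\ref{LPunfoldingdefinition}, exactly as in Proposition~\ref{unfoldingproperties}(2)), so $|T_\varepsilon^{lp}\varphi_\varepsilon|^p=T_\varepsilon^{lp}(|\varphi_\varepsilon|^p)$, and Proposition~\ref{propostionlpconvbasic}(1) gives $\|T_\varepsilon^{lp}\varphi_\varepsilon\|_{L^p}^p=L\,|||\varphi_\varepsilon|||_{L^p(R^\varepsilon)}^p$; hence $|||\varphi_\varepsilon|||_{L^p(R^\varepsilon)}$ is bounded. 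A Hölder (Jensen) estimate, using $0<G_\varepsilon\le G_1$, then shows that $x\mapsto \frac{L}{\varepsilon}\int_0^{\varepsilon G_\varepsilon(x)}\varphi_\varepsilon(x,y)\,dy$ is bounded in $L^p(0,1)$. Since $1<p<\infty$, it remains only to identify the weak limit.

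Next, fix $\psi\in L^{p'}(0,1)$ and view it as an element of $L^{p'}(R^\varepsilon)$ constant in $y$; then $\varphi_\varepsilon\psi\in L^1(R^\varepsilon)$ by Hölder, and by Fubini
$$\int_0^1\Big(\frac{L}{\varepsilon}\int_0^{\varepsilon G_\varepsilon(x)}\varphi_\varepsilon(x,y)\,dy\Big)\psi(x)\,dx=\frac{L}{\varepsilon}\int_{R^\varepsilon}\varphi_\varepsilon\,\psi\,dxdy.$$
Applying Proposition~\ref{propostionlpconvbasic}(1) to the function $\varphi_\varepsilon\psi$ and using multiplicativity of $T_\varepsilon^{lp}$, the right-hand side equals $\int_{(0,1)\times(0,L)\times(0,G_1)}T_\varepsilon^{lp}\varphi_\varepsilon\,T_\varepsilon^{lp}\psi\,dxdy_1dy_2$.

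Finally I would pass to the limit. By Proposition~\ref{propostionlpconvbasic}(2), $T_\varepsilon^{lp}\psi\to\chi_{(0,1)\times Y^*(x)}\psi$ strongly in $L^{p'}((0,1)\times(0,L)\times(0,G_1))$, while by hypothesis $T_\varepsilon^{lp}\varphi_\varepsilon\rightharpoonup\chi_{(0,1)\times Y^*(x)}\varphi$ weakly in $L^p$; since the product of a weakly convergent sequence and a strongly convergent one (with conjugate exponents) converges, the integral above tends to
$$\int_{(0,1)\times(0,L)\times(0,G_1)}\chi_{(0,1)\times Y^*(x)}\,\varphi(x)\psi(x)\,dxdy_1dy_2=\int_0^1|Y^*(x)|\,\varphi(x)\psi(x)\,dx,$$
using $\chi_{(0,1)\times Y^*(x)}^2=\chi_{(0,1)\times Y^*(x)}$ and $\int_{(0,L)\times(0,G_1)}\chi_{Y^*(x)}\,dy_1dy_2=|Y^*(x)|$. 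As $\psi\in L^{p'}(0,1)$ was arbitrary and the sequence is bounded in $L^p(0,1)$, this yields the asserted weak convergence. The proof presents no serious obstacle; the only points deserving a word of justification are the multiplicativity of $T_\varepsilon^{lp}$ and the fact that the extension by zero built into $T_\varepsilon^{lp}$ produces no boundary-layer remainder — both are handled automatically, the latter because Proposition~\ref{propostionlpconvbasic}(1) is an exact identity rather than an asymptotic one.
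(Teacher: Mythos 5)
Your proof is correct and follows essentially the same route as the paper: test against $\psi\in L^{p'}(0,1)$, use the exact integration identity of Proposition \ref{propostionlpconvbasic}(1) together with multiplicativity of $T_\varepsilon^{lp}$ to rewrite the pairing as $\int T_\varepsilon^{lp}\varphi_\varepsilon\, T_\varepsilon^{lp}\psi$, and pass to the limit by weak--strong convergence using Proposition \ref{propostionlpconvbasic}(2). The preliminary uniform bound you establish is harmless but not needed, since convergence of the pairings against all $\psi\in L^{p'}(0,1)$ is already the definition of weak convergence in $L^p(0,1)$.
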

	\begin{proof}
		Notice that
		$$
		\dfrac{1}{L}\int_{(0,1)\times (0,L)\times (0,G_1)}T_\varepsilon^{lp}\varphi_\varepsilon T_\varepsilon^{lp}\psi(x) dx dy_1dy_2\to \dfrac{1}{L}\int_{(0,1)\times (0,L)\times (0,G_1)}\varphi(x)\psi(x)\chi_{(0,1)\times Y^*(x)} dx dy_1dy_2,
		$$
		for all $\psi\in L^{p'}(0,1).$
		By the Proposition \ref{propostionlpconvbasic}, we have
		$$
		\begin{gathered}
		\dfrac{1}{L}\int_{(0,1)\times (0,L)\times (0,G_1)}T_\varepsilon^{lp}\varphi_\varepsilon T_\varepsilon^{lp}\psi(x) dx dy_1dy_2=\dfrac{1}{\varepsilon}\int_{R^\varepsilon}\varphi_\varepsilon(x,y)\psi(x) dxdy\\=\int_0^1\left(\dfrac{1}{\varepsilon}\int_0^{\varepsilon G_\varepsilon(x)}\varphi_\varepsilon(x,y)dy\right)\psi(x)dx
		\end{gathered}
		$$
		and
		$$
		\dfrac{1}{L}\int_{(0,1)\times (0,L)\times (0,G_1)}\varphi(x)\psi(x)\chi_{(0,1)\times Y^*(x)} dx dy_1dy_2=\dfrac{1}{L}\int_0^1 |Y^*(x)|\varphi(x)\psi(x) dx,
		$$
		for all $\psi\in L^{p'}(0,1)$.
		Thus,
		$$
		\dfrac{1}{\varepsilon}\int_0^{\varepsilon G_\varepsilon(x)}\varphi_\varepsilon(x,y)dy\rightharpoonup \dfrac{1}{L} |Y^*(x)|\varphi(x)
		$$
		weakly in $L^p(0,1)$.
	\end{proof}

	\section{A domain dependence result}\label{secdomdep}
	
	In the next we analyze how the solutions of \eqref{problem} depends on the function $G_\varepsilon$.  
	Let us take 
	$$
	G_\varepsilon(x)=G\left(x,\dfrac{x}{\varepsilon}\right) \quad \mbox{ and } \quad \hat{G}_\varepsilon(x)=\hat{G}\left(x,\dfrac{x}{\varepsilon}\right)
	$$ 
	satisfying hypothesis \hyperref[hypG]{(H)} and considering the associated thin domains $R^\varepsilon$ and $\hat{R}^\varepsilon$ by
	\begin{eqnarray*}
		R^\varepsilon=\left\lbrace (x,y)\in\mathbb{R}^2:x\in(0,1),0<y<\varepsilon G_\varepsilon(x) \right\rbrace \quad \textrm{ and } \\
		\hat{R}^\varepsilon=\left\lbrace (x,y)\in\mathbb{R}^2:x\in(0,1),0<y<\varepsilon \hat{G}_\varepsilon(x) \right\rbrace.
	\end{eqnarray*}
	
	Now, let $u_\varepsilon$ and $\hat{u}_\varepsilon$ be the solutions of \eqref{problem} for the domains $R^\varepsilon$ and $\hat{R}^\varepsilon$ respectively with $f^\varepsilon\in L^{p'}(\mathbb{R}^2)$. We have the following result.
	
	\begin{theorem}\label{thmapprox}
		Let $G_\varepsilon$ and $\hat{G}_\varepsilon$ be piecewise $C^1$ functions satisfying \hyperref[hypG]{(H)} with $$\|G_\varepsilon-\hat G_\varepsilon\|_{L^\infty(0,1)}\leq \delta.$$ 
		Assume also 
		$f^\varepsilon\in L^{p'}(\mathbb{R}^2)$ satisfying $\|f^\varepsilon\|_{L^p(\mathbb{R}^2)}\leq 1$.
		
		Then, there exists a positive real function $\rho:[0,\infty)\mapsto [0,\infty)$ such that 
		\begin{equation}\label{thmapproxineq}
		|||u_\varepsilon-\hat{u}_\varepsilon|||^p_{W^{1,p}(R^\varepsilon\cap \hat{R}^\varepsilon)}+|||u_\varepsilon|||^p_{W^{1,p}(R^\varepsilon\backslash \hat{R}^\varepsilon)}+|||\hat{u}_\varepsilon|||^p_{W^{1,p}(\hat{R}^\varepsilon\backslash R^\varepsilon)}\leq \rho(\delta),
		\end{equation}
		with $\rho(\delta)\to 0$ as $\delta\to0$ uniformly for all $\varepsilon>0$.
	\end{theorem}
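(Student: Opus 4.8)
The plan is to compare $u_\varepsilon$ and $\hat u_\varepsilon$ on the common region and to control each of them on the (small) symmetric difference $R^\varepsilon\triangle\hat R^\varepsilon$. Without loss of generality write $\Omega^\varepsilon = R^\varepsilon\cup\hat R^\varepsilon$ and note that since $\|G_\varepsilon-\hat G_\varepsilon\|_{L^\infty}\le\delta$, the set $R^\varepsilon\triangle\hat R^\varepsilon$ is contained in a strip of height $\le\varepsilon\delta$ over $(0,1)$, so $|R^\varepsilon\triangle\hat R^\varepsilon|\le\varepsilon\delta$. The first step is to extend $u_\varepsilon$ and $\hat u_\varepsilon$ to $W^{1,p}$ functions on $\Omega^\varepsilon$ (indeed on $\mathbb{R}^2$) by a standard extension operator whose norm is bounded uniformly in $\varepsilon$ after rescaling — this is available because $G_0\le G_\varepsilon\le G_1$ and the boundary oscillations have controlled slope; such uniform extension results for thin domains with (locally periodic) rough boundary are exactly the tool used in \cite{AP10,AM}. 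Denote the extensions again by $u_\varepsilon,\hat u_\varepsilon$; they satisfy $|||u_\varepsilon|||_{W^{1,p}(\mathbb{R}^2)}, |||\hat u_\varepsilon|||_{W^{1,p}(\mathbb{R}^2)}\le c$ by Proposition \ref{uniformlimtation}.

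Next I would subtract the two variational identities. Testing the equation for $u_\varepsilon$ on $R^\varepsilon$ with $\varphi=(u_\varepsilon-\hat u_\varepsilon)\chi$ for a suitable cutoff and the equation for $\hat u_\varepsilon$ on $\hat R^\varepsilon$ likewise, and combining, one gets on the common part $R^\varepsilon\cap\hat R^\varepsilon$ an estimate of the form
\begin{equation*}
\int_{R^\varepsilon\cap\hat R^\varepsilon}\!\!\big\langle|\nabla u_\varepsilon|^{p-2}\nabla u_\varepsilon-|\nabla\hat u_\varepsilon|^{p-2}\nabla\hat u_\varepsilon,\nabla(u_\varepsilon-\hat u_\varepsilon)\big\rangle + \big(|u_\varepsilon|^{p-2}u_\varepsilon-|\hat u_\varepsilon|^{p-2}\hat u_\varepsilon\big)(u_\varepsilon-\hat u_\varepsilon)\,dxdy = E_\varepsilon,
\end{equation*}
where the error term $E_\varepsilon$ collects $\int_{f^\varepsilon}(u_\varepsilon-\hat u_\varepsilon)$ over $R^\varepsilon\triangle\hat R^\varepsilon$ together with the "boundary leakage" terms $\int_{R^\varepsilon\setminus\hat R^\varepsilon}(|\nabla u_\varepsilon|^{p-2}\nabla u_\varepsilon\nabla\hat u_\varepsilon + |u_\varepsilon|^{p-2}u_\varepsilon\hat u_\varepsilon)$ and its symmetric counterpart. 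Each piece of $E_\varepsilon$ is estimated by Hölder's inequality using $|||\cdot|||$-norms and the smallness $|R^\varepsilon\triangle\hat R^\varepsilon|\le\varepsilon\delta$: for instance $\varepsilon^{-1}\int_{R^\varepsilon\setminus\hat R^\varepsilon}|f^\varepsilon||u_\varepsilon-\hat u_\varepsilon|\le c\,\big(\varepsilon^{-1}|R^\varepsilon\triangle\hat R^\varepsilon|\big)^{1/\sigma}$ for some $\sigma>1$ obtained by interpolating $L^{p'}$ against a higher Lebesgue exponent coming from a Sobolev-type inequality on the thin extended domain, so $E_\varepsilon\le\rho_1(\delta)$ with $\rho_1(\delta)\to0$ uniformly in $\varepsilon$. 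On the left, Proposition \ref{proposicaoplaplaciano} gives the coercivity: when $p\ge2$ the first integrand is $\ge c_p|\nabla(u_\varepsilon-\hat u_\varepsilon)|^p$ directly; when $1<p<2$ one first gets the weighted quantity $|\nabla(u_\varepsilon-\hat u_\varepsilon)|^2(1+|\nabla u_\varepsilon|+|\nabla\hat u_\varepsilon|)^{p-2}$ and recovers the $L^p$-norm of $\nabla(u_\varepsilon-\hat u_\varepsilon)$ by a further Hölder argument against the uniformly bounded $\||\nabla u_\varepsilon|+|\nabla\hat u_\varepsilon|\|_{L^p}$, at the cost of replacing $\rho_1$ by some power $\rho_1^{\theta}$ — still $\to0$. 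This yields $|||u_\varepsilon-\hat u_\varepsilon|||^p_{W^{1,p}(R^\varepsilon\cap\hat R^\varepsilon)}\le\rho_2(\delta)$.

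Finally, the two remaining terms $|||u_\varepsilon|||^p_{W^{1,p}(R^\varepsilon\setminus\hat R^\varepsilon)}$ and $|||\hat u_\varepsilon|||^p_{W^{1,p}(\hat R^\varepsilon\setminus R^\varepsilon)}$ are handled by absolute continuity of the integral: since $|||u_\varepsilon|||_{W^{1,p}(\mathbb{R}^2)}\le c$ uniformly, and $|R^\varepsilon\setminus\hat R^\varepsilon|\le\varepsilon\delta$, one would like $\varepsilon^{-1}\int_{R^\varepsilon\setminus\hat R^\varepsilon}(|\nabla u_\varepsilon|^p+|u_\varepsilon|^p)$ to be small — but this does not follow from $L^p$ boundedness alone, which is the main obstacle. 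I would resolve it by using a slightly higher integrability of $|\nabla u_\varepsilon|$ and $u_\varepsilon$ on the thin domain: either a uniform Meyers-type estimate $|||\nabla u_\varepsilon|||_{L^{p+\beta}}\le c$ for some $\beta>0$ (available for the $p$-Laplacian with controlled geometry), or, more elementarily, the observation that $R^\varepsilon\setminus\hat R^\varepsilon$ sits within distance $\varepsilon\delta$ of the graph $y=\varepsilon\hat G_\varepsilon(x)$, combined with a trace/Poincaré inequality in the vertical variable that bounds the integral over this thin sliver by $(\varepsilon\delta)^{\gamma}$ times a full-domain norm. With such a gain, Hölder with exponents $(p+\beta)/p$ and its conjugate gives $\varepsilon^{-1}\int_{R^\varepsilon\setminus\hat R^\varepsilon}|\nabla u_\varepsilon|^p\le c\,(\varepsilon^{-1}|R^\varepsilon\triangle\hat R^\varepsilon|)^{\beta/(p+\beta)}\le c\,\delta^{\beta/(p+\beta)}=:\rho_3(\delta)$, uniformly in $\varepsilon$. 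Summing $\rho_2$ and $\rho_3$ (and the analogous estimate with roles of $G_\varepsilon,\hat G_\varepsilon$ swapped) produces the claimed $\rho(\delta)\to0$. The delicate point throughout is to make every constant — in the extension operator, in the Sobolev/trace inequalities, and in the higher-integrability estimate — independent of $\varepsilon$ after the $\varepsilon^{-1/p}$ rescaling; this is where hypothesis \hyperref[hypG]{(H)} (uniform bounds on $G,\partial_xG,\partial_yG$ and $G_0\le G\le G_1$) is used essentially.
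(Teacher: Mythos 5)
Your plan correctly isolates the real difficulty --- showing that $\varepsilon^{-1}\int_{R^\varepsilon\setminus\hat R^\varepsilon}(|\nabla u_\varepsilon|^p+|u_\varepsilon|^p)$ is small uniformly in $\varepsilon$ --- but it does not resolve it, and the resolutions you sketch are themselves unproven claims that are at least as hard as the theorem. A Meyers-type gain $|||\nabla u_\varepsilon|||_{L^{p+\beta}}\le c$ with $\beta$ and $c$ independent of $\varepsilon$ is not ``available'' here: the domain degenerates (thickness $\varepsilon$, boundary oscillating with period $\varepsilon L$), and after the anisotropic rescaling the geometry does not stay uniformly nice in the sense required by reverse-H\"older arguments. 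Likewise, a uniform extension operator to $\mathbb{R}^2$ for locally periodic rough thin domains is a delicate matter, not a black box one can invoke; and the ``trace/Poincar\'e in the vertical variable'' alternative is left entirely unspecified. The testing step has the same hidden cost: whether you use a cutoff (whose gradient blows up like $(\varepsilon\delta)^{-1}$ across the sliver) or the extension of $\hat u_\varepsilon$ into $R^\varepsilon\setminus\hat R^\varepsilon$, the leakage terms again require exactly the boundary-layer smallness you have not established. So the argument is circular at its core.

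The paper takes a genuinely different route that avoids all of this. It exploits that $u_\varepsilon$ and $\hat u_\varepsilon$ are \emph{minimizers} of the energies $V_\varepsilon$, $\hat V_\varepsilon$, and introduces the vertical dilation $P_{1+\eta}$ with $\eta=\delta/G_0$, so that $R^\varepsilon(\tfrac{1}{1+\eta})\subset\hat R^\varepsilon\subset R^\varepsilon(1+\eta)$. Lemma \ref{lemmaestimates} is the key: comparing $V_\varepsilon(u_\varepsilon)$ with the energy of its own dilation $P_{1+\eta}u_\varepsilon$ (using Proposition \ref{proposicaoplaplacian01} and the equation itself) yields that the energy of $u_\varepsilon$ in the boundary layer $R^\varepsilon\setminus R^\varepsilon(\tfrac{1}{1+\eta})$, together with $|||P_{1+\eta}u_\varepsilon-u_\varepsilon|||_{W^{1,p}}$, is bounded by $c\eta+c\eta^{1/p'}$ --- no extension, no higher integrability, no trace inequality. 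The cross comparison is then done by plugging $P_{1+\eta}\hat u_\varepsilon$ into $V_\varepsilon$ and $P_{1+\eta}u_\varepsilon$ into $\hat V_\varepsilon$ and using strict convexity (again Proposition \ref{proposicaoplaplacian01}) to extract $|||P_{1+\eta}u_\varepsilon-\hat u_\varepsilon|||^p_{W^{1,p}(\hat R^\varepsilon)}\le\hat\rho(\delta)$. If you want to salvage your approach you would have to prove one of the auxiliary uniform estimates you assume; the minimization-plus-dilation argument is the mechanism that makes the theorem provable with only the tools already established in the paper.
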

	\begin{remark}
		The important part of this result is that the function $\rho(\delta)$ does not depend on $\varepsilon$. As we will see, it only depends on the positive constants $G_0$ and $G_1$.
	\end{remark}
	
	In order to prove Theorem \ref{thmapproxineq}, we use the fact that $u_\varepsilon$ and $\hat{u}_\varepsilon$ are minimizer of the the functionals
	\begin{equation}\label{functionals}
	\begin{gathered}
	V_\varepsilon(\varphi)=\dfrac{1}{p\,\varepsilon}\int_{R^\varepsilon}\left(|\nabla \varphi|^p+|\varphi|^p\right)dxdy-\dfrac{1}{\varepsilon}\int_{R^\varepsilon}f^\varepsilon\varphi dxdy\\
	\hat{V}_\varepsilon(\hat{\varphi})=\dfrac{1}{p\,\varepsilon}\int_{\hat{R}^\varepsilon}\left(|\nabla \hat\varphi|^p+|\hat\varphi|^p\right)dxdy-\dfrac{1}{\varepsilon}\int_{\hat{R}^\varepsilon}f^\varepsilon\hat\varphi dxdy
	\end{gathered}
	\end{equation}
	that is, 
	\begin{equation*}
	\begin{gathered}
	V_\varepsilon(u_\varepsilon)=\min_{\varphi\in W^{1,p}(R^\varepsilon)}V_\varepsilon(\varphi)
	\quad \textrm{ and } \quad 
	\hat{V}_\varepsilon(\hat{u}_\varepsilon)=\min_{\hat\varphi\in W^{1,p}(\hat R^\varepsilon)}\hat V_\varepsilon(\hat\varphi).
	\end{gathered}
	\end{equation*}
	
	We will need to evaluate the minimizers plugging them into different functionals.  For this, we set the following operators introduced in \cite{AP10}:
	\begin{equation}\label{operatorcomparison}
	\begin{gathered}
	P_{1+\eta}:W^{1,p}(U)\mapsto W^{1,p}\left(U(1+\eta)\right)\\
	\left(P_{1+\eta}\varphi\right)(x,y)=\varphi\left(x,\dfrac{y}{1+\eta}\right), \quad (x,y)\in U(1+\eta),
	\end{gathered}
	\end{equation}
	where 
	\begin{equation}\label{rescaleddomain}
	U(1+\eta)=\left\lbrace \left(x,(1+\eta)y\right)\in \mathbb{R}^2:(x,y)\in U\right\rbrace
	\end{equation}
	and $U\subset \mathbb{R}^2$ is an arbitrary open set. 
	We also consider the following norm in $W^{1,p}(U)$
	\begin{equation}\label{etanorm}
	||w||^p_{W^{1,p}_{1+\eta}(U)}=\dfrac{1}{1+\eta}\left[\left|\left|w\right|\right|^p_{L^p(U)}+\left|\left|K_{1+\eta}\nabla w\right|\right|^p_{L^p(U)}\right]
	\end{equation}
	where 
	$$
	K_{1+\eta}=\left(\begin{array}{cc}
	1 & 0\\
	0 & 1+\eta
	\end{array}\right).
	$$
	
	We can easily see that
	\begin{equation}\label{relationnorm}
	||w||^p_{W^{1,p} (U)}=||P_{1+\eta}w||^p_{W^{1,p}_{1+\eta}(U(1+\eta))}
	\end{equation}
	and
	\begin{equation*}\label{equivalencenorm}
	\dfrac{1}{1+\eta}||w||^p_{W^{1,p}(U)}\leq ||w ||_{W^{1,p}_{1+\eta}(U)}\leq (1+\eta) ||w||^p_{W^{1,p}(U)} \quad \textrm{ as } \eta \geq 0.
	\end{equation*}
	Also, we need the following result about the behavior of the solutions near of the oscillating boundary.
	\begin{lemma}\label{lemmaestimates}
		Let $u_\varepsilon$ be the solution of problem \eqref{problem} and let $P_{1+\eta}$ be the operator given by \eqref{operatorcomparison}. Then, there exists a positive function $\rho=\rho(p,\eta)$ satisfying $\rho(p,\eta)\to 0$ as $\eta\to 0$, such that 
		\begin{equation*} 
		\begin{gathered}
		|||u_\varepsilon |||_{W^{1,p}\left(R^\varepsilon\backslash R^\varepsilon\left(\frac{1}{1+\eta}\right)\right)}^p+|||u_\varepsilon |||_{W^{1,p}\left(R^\varepsilon(1+\eta)\backslash R^\varepsilon \right)}^p
		+|||P_{1+\eta}u_\varepsilon-u_\varepsilon |||_{W^{1,p}\left(R^\varepsilon\right)}^p
		\leq \rho(p,\eta),
		\end{gathered}
		\end{equation*}
		for $1<p<\infty$.
	\end{lemma}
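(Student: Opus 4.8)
The plan is to exploit the variational characterization of $u_\varepsilon$ as the minimizer of $V_\varepsilon$ together with the scaling operators $P_{1+\eta}$, comparing $V_\varepsilon(u_\varepsilon)$ with $V_\varepsilon(P_{1+\eta}u_\varepsilon)$ and with the energy of $u_\varepsilon$ restricted to annular regions near the top boundary. First I would estimate the measure of the boundary strips: since $G_0 \le G(x,x/\varepsilon) \le G_1$, the region $R^\varepsilon \setminus R^\varepsilon(1/(1+\eta))$ is contained in $\{(x,y) : x\in(0,1),\ \varepsilon G_\varepsilon(x)/(1+\eta) < y < \varepsilon G_\varepsilon(x)\}$, whose rescaled Lebesgue measure $\frac1\varepsilon|R^\varepsilon\setminus R^\varepsilon(1/(1+\eta))|$ is bounded by $G_1\,\eta/(1+\eta)$, hence $O(\eta)$ uniformly in $\varepsilon$; similarly for $R^\varepsilon(1+\eta)\setminus R^\varepsilon$. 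This already controls the $L^p$ part of the first two terms via the uniform bound $|||u_\varepsilon|||_{W^{1,p}(R^\varepsilon)} \le c$ from Proposition \ref{uniformlimtation} — but only after one knows higher integrability or some equi-integrability of $|\nabla u_\varepsilon|^p$ over thin strips. The cleaner route, and the one I would follow, is to get all three terms at once from an energy comparison.

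The key computation is as follows. Using \eqref{relationnorm} and the definition \eqref{etanorm}, one has $\||\nabla P_{1+\eta}u_\varepsilon|^p + |P_{1+\eta}u_\varepsilon|^p\|_{L^1(R^\varepsilon(1+\eta))}$ expressed in terms of $u_\varepsilon$ on $R^\varepsilon$ with the anisotropic matrix $K_{1/(1+\eta)}$, so that $V_\varepsilon(P_{1+\eta}u_\varepsilon)$ (the functional on the \emph{larger} domain, but here we must be careful: $P_{1+\eta}$ maps into $W^{1,p}(R^\varepsilon(1+\eta))$, and we want to land back inside a domain on which $V_\varepsilon$ is defined). Concretely I would instead compare on $R^\varepsilon$ itself: take the competitor $\varphi = P_{1/(1+\eta)}u_\varepsilon \in W^{1,p}(R^\varepsilon(1/(1+\eta)))$ extended, or rather run the argument symmetrically. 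The heart of the matter is the inequality, valid for minimizers of uniformly convex functionals, that $V_\varepsilon(P_{1+\eta}u_\varepsilon) - V_\varepsilon(u_\varepsilon) \ge c_p\,|||P_{1+\eta}u_\varepsilon - u_\varepsilon|||^p_{W^{1,p}(R^\varepsilon)}$ when $p\ge 2$ (with the $1<p<2$ variant from Proposition \ref{proposicaoplaplacian01}), while simultaneously the left side is small because $P_{1+\eta}u_\varepsilon \to u_\varepsilon$ in energy as $\eta\to0$: the difference in the Dirichlet terms is $\frac1{p\varepsilon}\int_{R^\varepsilon}(|K_{1+\eta}^{-1}\nabla u_\varepsilon|^p - |\nabla u_\varepsilon|^p) + (\text{domain mismatch terms on } R^\varepsilon(1+\eta)\triangle R^\varepsilon)$, and $\|K_{1+\eta}^{-1} - I\| = O(\eta)$, so this is $O(\eta)\cdot|||u_\varepsilon|||^p_{W^{1,p}(R^\varepsilon)} = O(\eta)$ by Proposition \ref{uniformlimtation}. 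The domain-mismatch integrals are handled by the measure estimate above combined with property 10 of Proposition \ref{unfoldingproperties} (the "boundary layer $R^\varepsilon_{1i}$" vanishing), or more elementarily by absolute continuity of the integral together with the uniform $W^{1,p}$ bound after passing through the unfolded picture. Collecting, $|||P_{1+\eta}u_\varepsilon - u_\varepsilon|||^p_{W^{1,p}(R^\varepsilon)} \le \rho_1(p,\eta) \to 0$.

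For the first two terms I would then use that $P_{1+\eta}u_\varepsilon$ agrees with a dilation of $u_\varepsilon$: on the strip $R^\varepsilon(1+\eta)\setminus R^\varepsilon$ the function $P_{1+\eta}u_\varepsilon$ is determined by the values of $u_\varepsilon$ on $R^\varepsilon\setminus R^\varepsilon(1/(1+\eta))$, so the three annular energies are comparable (up to constants depending only on $\eta$, bounded as $\eta\to0$) and it suffices to bound $|||u_\varepsilon|||^p_{W^{1,p}(R^\varepsilon\setminus R^\varepsilon(1/(1+\eta)))}$. This last quantity I control by testing \eqref{1001} with $u_\varepsilon\,\zeta_\eta$ where $\zeta_\eta$ is a cutoff supported in the strip and equal to $1$ near the top boundary, using the Neumann condition to drop the boundary integral and the thinness/measure estimate to absorb the remaining terms — or, alternatively and more in the spirit of \cite{AP10}, I deduce it directly from the energy comparison by noting $V_\varepsilon(u_\varepsilon)\le V_\varepsilon(P_{1/(1+\eta)} w)$ for suitable $w$, which forces the energy of $u_\varepsilon$ on the outer strip to be $O(\eta)$.

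\smallskip
\noindent\textbf{Main obstacle.} The delicate point is making the domain-mismatch / boundary-layer estimates genuinely uniform in $\varepsilon$: the sets $R^\varepsilon(1+\eta)\triangle R^\varepsilon$ shrink as $\eta\to0$, but $\nabla u_\varepsilon$ need not be equi-integrable a priori, so one cannot simply invoke absolute continuity of the integral uniformly in $\varepsilon$. The resolution is to keep everything inside the energy comparison — where only the \emph{uniform} bound $|||u_\varepsilon|||_{W^{1,p}(R^\varepsilon)}\le c$ and the $O(\eta)$ bound on $\|K_{1+\eta}^{-1}-I\|$ and on the rescaled measure of the strips are used, both of which depend only on $G_0,G_1,p$ — rather than trying to estimate $\int_{\text{strip}}|\nabla u_\varepsilon|^p$ pointwise in $\varepsilon$. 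This is exactly why the convexity inequalities of Proposition \ref{proposicaoplaplacian01} (splitting into $p\ge2$ and $1<p<2$) are the right tool, and why the argument must be organized around minimality rather than around PDE test functions.
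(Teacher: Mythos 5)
Your overall strategy coincides with the paper's: compare $V_\varepsilon(u_\varepsilon)$ with $V_\varepsilon(P_{1+\eta}u_\varepsilon)$, use the convexity inequalities of Proposition \ref{proposicaoplaplacian01} at the minimizer to generate the term $c_p|||P_{1+\eta}u_\varepsilon-u_\varepsilon|||^p_{W^{1,p}(R^\varepsilon)}$, and keep the strip energies inside the exact change-of-variables identity \eqref{relationnorm} (where they carry a favorable sign) instead of trying to estimate $\int_{\mathrm{strip}}|\nabla u_\varepsilon|^p$ by measure or equi-integrability arguments. Your ``main obstacle'' paragraph diagnoses correctly why the latter cannot work uniformly in $\varepsilon$. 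One sub-route you offer should simply be discarded: property 10 of Proposition \ref{unfoldingproperties} concerns the unfolding remainder $R^\varepsilon_{1i}$, whose relative measure vanishes as $\varepsilon\to0$ for fixed data; it says nothing about the strips $R^\varepsilon\setminus R^\varepsilon\left(\frac{1}{1+\eta}\right)$, whose rescaled measure is of order $\eta$ \emph{uniformly} in $\varepsilon$. Likewise the cutoff/Caccioppoli alternative would produce gradient terms of size $(\varepsilon\eta)^{-1}$ that you cannot absorb.

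There is one genuinely missing quantitative ingredient. After the convexity lower bound and the change of variables, the energy comparison leaves the linear term
\begin{equation*}
\frac{1}{\varepsilon}\int_{R^\varepsilon}f^\varepsilon\Bigl(u_\varepsilon-\frac{P_{1+\eta}u_\varepsilon}{1+\eta}\Bigr)\,dxdy ,
\end{equation*}
and H\"older together with the uniform bounds of Proposition \ref{uniformlimtation} only gives $O(1)$ for it, not $o(1)$ as $\eta\to0$; Young's inequality does not help either, since the resulting $\|f^\varepsilon\|_{L^{p'}}^{p'}$ contribution carries no power of $\eta$. What closes the argument — and what the paper proves as the core quantitative step — is the estimate
\begin{equation*}
|||u_\varepsilon-P_{1+\eta}u_\varepsilon|||_{L^p(R^\varepsilon)}\le c\,G_1\Bigl(\frac{\eta}{1+\eta}\Bigr)^{1/p'}|||\partial_y u_\varepsilon|||_{L^p(R^\varepsilon)},
\end{equation*}
obtained by writing $u_\varepsilon(x,y)-u_\varepsilon\left(x,\frac{y}{1+\eta}\right)=\int_{y/(1+\eta)}^{y}\partial_y u_\varepsilon(x,s)\,ds$ and using that the vertical segment has length at most $\varepsilon G_1\eta/(1+\eta)$: this is precisely where the thinness of $R^\varepsilon$ enters quantitatively and uniformly in $\varepsilon$. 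Your plan glosses over this when asserting that ``$P_{1+\eta}u_\varepsilon\to u_\varepsilon$ in energy''; the Dirichlet part of that claim follows from $\|K_{1+\eta}^{-1}-I\|=O(\eta)$ as you say, but the $L^p$-closeness needed for the $f$-term does not, and must be supplied by the above computation. With it inserted, your scheme reproduces the paper's bound $c\eta+c\eta^{1/p'}$ for $p\ge2$, and the H\"older interpolation you describe handles $1<p<2$.
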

	\begin{proof}
		Since $\eta>0$, we have that $R^\varepsilon\left(\frac{1}{1+\eta}\right)\subset R^\varepsilon$. Then,
		\begin{equation}\label{lemmacompineq01}
		\begin{gathered}
		V(u_\varepsilon)=\dfrac{1}{p}||| u_\varepsilon|||_{W^{1,p}(R^\varepsilon)}^p-\dfrac{1}{\varepsilon}\int_{R^\varepsilon}f^\varepsilon u_\varepsilon dxdy \qquad\qquad\qquad\qquad\qquad\qquad\\
		=\dfrac{1}{p}|||u_\varepsilon |||_{W^{1,p}\left(R^\varepsilon\backslash R^\varepsilon\left(\frac{1}{1+\eta}\right)\right)}^p+\dfrac{1}{p}|||u_\varepsilon |||_{W^{1,p}\left(R^\varepsilon\left(\frac{1}{1+\eta}\right)\right)}^p-\dfrac{1}{\varepsilon}\int_{R^\varepsilon}f^\varepsilon u_\varepsilon dxdy\\
		=\dfrac{1}{p}|||u_\varepsilon |||_{W^{1,p}\left(R^\varepsilon\backslash R^\varepsilon\left(\frac{1}{1+\eta}\right)\right)}^p+\dfrac{1}{p}|||P_{1+\eta}u_\varepsilon |||_{W^{1,p}_{1+\eta}\left(R^\varepsilon\right)}^p-\dfrac{1}{\varepsilon}\int_{R^\varepsilon}f^\varepsilon u_\varepsilon dxdy\\
		\geq \dfrac{1}{p}|||u_\varepsilon |||_{W^{1,p}\left(R^\varepsilon\backslash R^\varepsilon\left(\frac{1}{1+\eta}\right)\right)}^p+\dfrac{1}{p(1+\eta)}|||P_{1+\eta}u_\varepsilon |||_{W^{1,p}\left(R^\varepsilon\right)}^p-\dfrac{1}{\varepsilon}\int_{R^\varepsilon}f^\varepsilon u_\varepsilon dxdy.
		\end{gathered}
		\end{equation}
		
		Now, let us first assume $p\geq 2$. We use the notations of Corollary \ref{corolarioplaplaciano} to simplify proofs.
		By Proposition \ref{proposicaoplaplacian01}, \eqref{functionals} and \eqref{variationalproblem} for $\varphi=P_{1+\eta}u_\varepsilon-u_\varepsilon$, we get
		\begin{equation}\label{lemmacompineq02}
		\begin{gathered}
		|||P_{1+\eta}u_\varepsilon |||_{W^{1,p}\left(R^\varepsilon\right)}^p\geq ||| u_\varepsilon|||_{W^{1,p}(R^\varepsilon)}^p
		+\dfrac{p}{\varepsilon}\int_{R^\varepsilon}\left[a_p(\nabla u_\varepsilon)\nabla \left(P_{1+\eta}u_\varepsilon-u_\varepsilon\right)\right.\\
		\qquad\qquad\qquad\qquad\left.+a_p(u_\varepsilon)\left(P_{1+\eta}u_\varepsilon-u_\varepsilon\right)\right]dxdy+c_p|||P_{1+\eta}u_\varepsilon-u_\varepsilon |||_{W^{1,p}\left(R^\varepsilon\right)}^p\\
		=pV(u_\varepsilon)+\dfrac{p}{\varepsilon}\int_{R^\varepsilon}f^\varepsilon u_\varepsilon dxdy
		+\dfrac{p}{\varepsilon}\int_{R^\varepsilon}f^\varepsilon\left(P_{1+\eta}u_\varepsilon-u_\varepsilon\right)dxdy+c_p|||P_{1+\eta}u_\varepsilon-u_\varepsilon |||_{W^{1,p}\left(R^\varepsilon\right)}^p\\
		=pV(u_\varepsilon)
		+\dfrac{p}{\varepsilon}\int_{R^\varepsilon}f^\varepsilon P_{1+\eta}u_\varepsilon dxdy+c_p|||P_{1+\eta}u_\varepsilon-u_\varepsilon |||_{W^{1,p}\left(R^\varepsilon\right)}^p.
		\end{gathered}
		\end{equation}
		Putting together \eqref{lemmacompineq01} and \eqref{lemmacompineq02}, we obtain
		\begin{equation*}
		\begin{gathered}
		V(u_\varepsilon)\geq \dfrac{1}{p}|||u_\varepsilon |||_{W^{1,p}\left(R^\varepsilon\backslash R^\varepsilon\left(\frac{1}{1+\eta}\right)\right)}^p+\dfrac{1}{p(1+\eta)}|||P_{1+\eta}u_\varepsilon |||_{W^{1,p}\left(R^\varepsilon\right)}^p-\dfrac{1}{\varepsilon}\int_{R^\varepsilon}f^\varepsilon u_\varepsilon dxdy\\
		\geq \dfrac{1}{p}|||u_\varepsilon |||_{W^{1,p}\left(R^\varepsilon\backslash R^\varepsilon\left(\frac{1}{1+\eta}\right)\right)}^p+\dfrac{1}{1+\eta}V(u_\varepsilon)\\
		+\dfrac{1}{\varepsilon(1+\eta)}\int_{R^\varepsilon}f^\varepsilon P_{1+\eta}u_\varepsilon dxdy+\dfrac{c_p}{1+\eta}|||P_{1+\eta}u_\varepsilon-u_\varepsilon |||_{W^{1,p}\left(R^\varepsilon\right)}^p-\dfrac{1}{\varepsilon}\int_{R^\varepsilon}f^\varepsilon u_\varepsilon dxdy.
		\end{gathered}
		\end{equation*}
		Consequently
		\begin{equation*}
		\begin{gathered}
		\dfrac{\eta}{1+\eta}V(u_\varepsilon)\geq \dfrac{1}{p}|||u_\varepsilon |||_{W^{1,p}\left(R^\varepsilon\backslash R^\varepsilon\left(\frac{1}{1+\eta}\right)\right)}^p
		\\+\dfrac{1}{\varepsilon}\int_{R^\varepsilon}f^\varepsilon \left[\dfrac{P_{1+\eta}u_\varepsilon}{(1+\eta)}-u_\varepsilon\right] dxdy+\dfrac{c_p}{1+\eta}|||P_{1+\eta}u_\varepsilon-u_\varepsilon |||_{W^{1,p}\left(R^\varepsilon\right)}^p 
		\end{gathered}
		\end{equation*}
		which implies
		\begin{equation}\label{lemmacompineq03} 
		\begin{gathered}
		\dfrac{1}{p}|||u_\varepsilon |||_{W^{1,p}\left(R^\varepsilon\backslash R^\varepsilon\left(\frac{1}{1+\eta}\right)\right)}^p
		+\dfrac{c_p}{1+\eta}|||P_{1+\eta}u_\varepsilon-u_\varepsilon |||_{W^{1,p}\left(R^\varepsilon\right)}^p\\\leq \dfrac{\eta}{1+\eta}V(u_\varepsilon) +\dfrac{1}{\varepsilon}\int_{R^\varepsilon}f^\varepsilon \left[u_\varepsilon-\dfrac{P_{1+\eta}u_\varepsilon}{(1+\eta)}\right] dxdy.
		\end{gathered}
		\end{equation}
		
		Now, let us analyze the integral:
		$$
		\dfrac{1}{\varepsilon}\int_{R^\varepsilon}f^\varepsilon \left[u_\varepsilon-\dfrac{P_{1+\eta}u_\varepsilon}{(1+\eta)}\right] dxdy.
		$$
		To do this, notice that 
		$$
		u_\varepsilon(x,y)-(P_{1+\eta}u_\varepsilon)(x,y)=u_\varepsilon(x,y)-u_\varepsilon\left(x,\dfrac{y}{1+\eta}\right)=\int_{\frac{y}{1+\eta}}^y\partial_y u_\varepsilon(x,s)ds,
		$$
		which implies
		$$
		\left|u_\varepsilon(x,y)-(P_{1+\eta}u_\varepsilon)(x,y)\right|\leq \left[\int_{\frac{y}{1+\eta}}^y|\partial_y u_\varepsilon(x,s)|^pds\right]^{1/p}\left(\dfrac{\eta y}{(1+\eta)}\right)^{1/p'}
		$$
		%Therefore, elevating both sides by $p$ and integrating between $0$ and $\varepsilon G_\varepsilon(x)$, 
		putting the power $p$, multiplying by $1/\varepsilon$, integrating between $0$ and $\varepsilon G_\varepsilon(x)$ and using that $(y/(1+\eta),y)\subset (\varepsilon G_\varepsilon(x))$, we get
		\begin{equation*}
		\begin{gathered}
		\dfrac{1}{\varepsilon}\int_0^{\varepsilon G_\varepsilon(x)}\left|u_\varepsilon(x,y)-(P_{1+\eta}u_\varepsilon)(x,y)\right|^p dy\leq \left[\dfrac{1}{\varepsilon}\int_{0}^{\varepsilon G_\varepsilon(x)}|\partial_y u_\varepsilon(x,s)|^pds\right]\left(\dfrac{\eta }{1+\eta}\right)^{p-1}\dfrac{(\varepsilon G_\varepsilon(x))^{p} }{p}.
		\end{gathered}
		\end{equation*}
		
		Thus, we have
		$$
		|||u_\varepsilon-P_{1+\eta}u_\varepsilon|||_{L^p(R^\varepsilon)}\leq |||\partial_yu_\varepsilon|||_{L^p(R^\varepsilon)}\left(\dfrac{\eta }{1+\eta}\right)^{1/p'}\dfrac{G_1}{p^{1/p}},
		$$
		for $\varepsilon<1$.
		Consequently, we get
		\begin{equation}\label{lemmacompineq04}
		\begin{gathered}
		\left|\dfrac{1}{\varepsilon}\int_{R^\varepsilon}f^\varepsilon \left[u_\varepsilon-\dfrac{P_{1+\eta}u_\varepsilon}{(1+\eta)}\right] dxdy\right|\\
		\leq \dfrac{\eta}{\varepsilon(1+\eta)}\int_{R^\varepsilon}|f^\varepsilon u_\varepsilon| dxdy+\dfrac{1}{\varepsilon(1+\eta)}\int_{R^\varepsilon}\left|f^\varepsilon u_\varepsilon-f^\varepsilon P_{1+\eta}u_\varepsilon \right| dxdy\\
		\leq \dfrac{\eta}{1+\eta}|||f^\varepsilon|||_{L^{p'}(R^\varepsilon)}|||u_\varepsilon|||_{L^{p}(R^\varepsilon)}+|||f^\varepsilon|||_{L^{p'}(R^\varepsilon)}|||\partial_yu_\varepsilon|||_{L^p(R^\varepsilon)} \dfrac{\eta^{1/p'} }{(1+\eta)^{1+1/p'}}\dfrac{G_1}{p^{1/p}}.
		\end{gathered}
		\end{equation}
		Hence, due Proposition \ref{uniformlimtation}, \eqref{lemmacompineq03} and \eqref{lemmacompineq04}, one gets
		\begin{equation}\label{lemmacompp>201}
		\begin{gathered}
		\dfrac{1}{p}|||u_\varepsilon |||_{W^{1,p}\left(R^\varepsilon\backslash R^\varepsilon\left(\frac{1}{1+\eta}\right)\right)}^p
		+c_p|||P_{1+\eta}u_\varepsilon-u_\varepsilon |||_{W^{1,p}\left(R^\varepsilon\right)}^p\\
		\leq\dfrac{\eta}{1+\eta}c+\dfrac{\eta}{1+\eta}c+\dfrac{\eta^{1/p'} }{(1+\eta)^{1+1/p'}}c\\
		\leq c\eta+c\eta^{1/p'}. 
		\end{gathered}
		\end{equation}
		
		On the other hand, we have
		\begin{equation*}\label{lemmacompineq05}
		\begin{gathered}
		V(u_\varepsilon)=\dfrac{1}{p}||| u_\varepsilon|||_{W^{1,p}(R^\varepsilon)}^p-\dfrac{1}{\varepsilon}\int_{R^\varepsilon}f^\varepsilon u_\varepsilon dxdy \\
		=\dfrac{1}{p}||| P_{1+\eta}u_\varepsilon|||_{W^{1,p}_{1+\eta}(R^\varepsilon(1+\eta))}^p-\dfrac{1}{\varepsilon}\int_{R^\varepsilon}f^\varepsilon u_\varepsilon dxdy \\
		=\dfrac{1}{p}|||P_{1+\eta}u_\varepsilon |||_{W^{1,p}_{1+\eta}\left(R^\varepsilon(1+\eta)\backslash R^\varepsilon\right)}^p+\dfrac{1}{p}|||P_{1+\eta}u_\varepsilon |||_{W^{1,p}_{1+\eta}\left(R^\varepsilon\right)}^p-\dfrac{1}{\varepsilon}\int_{R^\varepsilon}f^\varepsilon u_\varepsilon dxdy\\
		\geq \dfrac{1}{p(1+\eta)}\left[|||P_{1+\eta}u_\varepsilon |||_{W^{1,p}\left(R^\varepsilon(1+\eta)\backslash R^\varepsilon)\right)}^p+|||P_{1+\eta}u_\varepsilon |||_{W^{1,p}\left(R^\varepsilon\right)}^p\right]-\dfrac{1}{\varepsilon}\int_{R^\varepsilon}f^\varepsilon u_\varepsilon dxdy.
		\end{gathered}
		\end{equation*}
		Hence, due to \eqref{lemmacompineq02}, we get 
		\begin{equation*}
		\begin{gathered}
		V(u_\varepsilon)\geq \dfrac{1}{p(1+\eta)}\left[|||P_{1+\eta}u_\varepsilon |||_{W^{1,p}\left(R^\varepsilon(1+\eta)\backslash R^\varepsilon)\right)}^p+|||P_{1+\eta}u_\varepsilon |||_{W^{1,p}\left(R^\varepsilon\right)}^p\right]-\dfrac{1}{\varepsilon}\int_{R^\varepsilon}f^\varepsilon u_\varepsilon dxdy\\
		\geq\dfrac{1}{p(1+\eta)}|||P_{1+\eta}u_\varepsilon |||_{W^{1,p}\left(R^\varepsilon(1+\eta)\backslash R^\varepsilon)\right)}^p+\dfrac{1}{(1+\eta)}V(u_\varepsilon)
		+\dfrac{1}{(1+\eta)\varepsilon}\int_{R^\varepsilon}f^\varepsilon P_{1+\eta}u_\varepsilon dxdy\\+c_p|||P_{1+\eta}u_\varepsilon-u_\varepsilon |||_{W^{1,p}\left(R^\varepsilon\right)}^p-\dfrac{1}{\varepsilon}\int_{R^\varepsilon}f^\varepsilon u_\varepsilon dxdy,
		\end{gathered}
		\end{equation*}
		and then,
		\begin{equation*}
		\begin{gathered}
		\dfrac{1}{p(1+\eta)}|||P_{1+\eta}u_\varepsilon |||_{W^{1,p}\left(R^\varepsilon(1+\eta)\backslash R^\varepsilon)\right)}^p+c_p|||P_{1+\eta}u_\varepsilon-u_\varepsilon |||_{W^{1,p}\left(R^\varepsilon\right)}^p\\\leq \dfrac{\eta}{1+\eta}V(u_\varepsilon)+\dfrac{1}{\varepsilon}\int_{R^\varepsilon}f^\varepsilon \left(u_\varepsilon -\dfrac{P_{1+\eta}u_\varepsilon}{(1+\eta)} \right) dxdy.
		\end{gathered}
		\end{equation*}
		Thus, due Proposition \ref{uniformlimtation} and \eqref{lemmacompineq04}, we get for $p>2$ that 
		\begin{equation}\label{lemmacompp>202}
		\begin{gathered}
		\dfrac{1}{p}|||P_{1+\eta}u_\varepsilon |||_{W^{1,p}\left(R^\varepsilon(1+\eta)\backslash R^\varepsilon \right)}^p
		+c_p|||P_{1+\eta}u_\varepsilon-u_\varepsilon |||_{W^{1,p}\left(R^\varepsilon\right)}^p\\
		\leq c\eta+c\eta^{1/p'}.
		\end{gathered}
		\end{equation}

		Notice that to the case $p>2$, we have mainly estimated the term
		$
		|x-y|^p.
		$
		Now, for the case $1<p<2$, we have to estimate  
		$
		(1+|x|+|y|)^{p-2}|x-y|^2
		$
		in view of Propositions \ref{proposicaoplaplaciano} and \ref{proposicaoplaplacian01}.
		Indeed, we can argue as in \eqref{lemmacompp>201} and \eqref{lemmacompp>202}, to get, for $1<p<2$ that
		\begin{equation}\label{lemmacompp<201}
		\begin{gathered}
		\dfrac{1}{p}|||u_\varepsilon |||_{W^{1,p}\left(R^\varepsilon\backslash R^\varepsilon\left(\frac{1}{1+\eta}\right)\right)}^p
		+\dfrac{c_p}{\varepsilon}\int_{R^\varepsilon}\left|\nabla P_{1+\eta}u_\varepsilon-\nabla  u_\varepsilon\right|^2\left(1+|\nabla P_{1+\eta}u_\varepsilon|+|\nabla u_\varepsilon|\right)^{p-2} dxdy\\
		+\dfrac{c_p}{\varepsilon}\int_{R^\varepsilon}\left|P_{1+\eta}u_\varepsilon-u_\varepsilon\right|^2\left(1+|P_{1+\eta}u_\varepsilon|+|u_\varepsilon|\right)^{p-2} dxdy\leq c\eta+c\eta^{1/p'}
		\end{gathered}
		\end{equation}
		and 
		\begin{equation*}\label{lemmacompp<202}
		\begin{gathered}
		\dfrac{1}{p}|||P_{1+\eta}u_\varepsilon |||_{W^{1,p}\left(R^\varepsilon(1+\eta)\backslash R^\varepsilon \right)}^p
		+\dfrac{c_p}{\varepsilon}\int_{R^\varepsilon}\left|\nabla P_{1+\eta}u_\varepsilon-\nabla  u_\varepsilon\right|^2\left(1+|\nabla P_{1+\eta}u_\varepsilon|+|\nabla u_\varepsilon|\right)^{p-2} dxdy\\
		+\dfrac{c_p}{\varepsilon}\int_{R^\varepsilon}\left|P_{1+\eta}u_\varepsilon-u_\varepsilon\right|^2\left(1+|P_{1+\eta}u_\varepsilon|+|u_\varepsilon|\right)^{p-2} dxdy\leq c \eta+c\eta^{p-1}. 
		\end{gathered}
		\end{equation*}
		
		Now, notice that 
		\begin{equation*}
		\begin{gathered}
		|||P_{1+\eta}u_\varepsilon -u_\varepsilon|||^p_{W^{1,p}\left(R^\varepsilon\right)} d\leq \left(\dfrac{1}{\varepsilon}\int_{R^\varepsilon}\left|\nabla P_{1+\eta}u_\varepsilon-\nabla  u_\varepsilon\right|^2\left(1+|\nabla P_{1+\eta}u_\varepsilon|+|\nabla u_\varepsilon|\right)^{p-2} dxdy\right)^{p/2}\\
		\quad\qquad\quad\qquad\cdot\left[\dfrac{1}{\varepsilon}\int_{R^\varepsilon} \left(1+|\nabla P_{1+\eta}u_\varepsilon|+|\nabla u_\varepsilon|\right)^{p} dxdy\right]^{(2-p)/2}\\
		+\left(\dfrac{1}{\varepsilon}\int_{R^\varepsilon}\left|P_{1+\eta}u_\varepsilon-  u_\varepsilon\right|^2\left(1+|  P_{1+\eta}u_\varepsilon|+|  u_\varepsilon|\right)^{p-2} dxdy\right)^{p/2}\\
		\quad\qquad\quad\qquad\cdot\left[\dfrac{1}{\varepsilon}\int_{R^\varepsilon} \left(1+|  P_{1+\eta}u_\varepsilon|+|  u_\varepsilon|\right)^{p} dxdy\right]^{(2-p)/2}.
		\end{gathered}
		\end{equation*} 
		
		Finally, putting together the last inequality and \eqref{lemmacompp<201}, we also obtain 		
		\begin{equation*}
		\dfrac{1}{p}|||u_\varepsilon |||_{W^{1,p}\left(R^\varepsilon\backslash R^\varepsilon\left(\frac{1}{1+\eta}\right)\right)}^p+|||P_{1+\eta}u_\varepsilon-u_\varepsilon |||_{W^{1,p}\left(R^\varepsilon\right)}^p\leq c\eta+c\eta^{1/p'}+\left[ c\eta+c\eta^{1/p'}\right]^{p/2}
		\end{equation*}
		for $1<p<2$ finishing the proof.
	\end{proof}

	Now, we are in condition to show Theorem \ref{thmapprox}. 
	\begin{proof}[Proof of Theorem \ref{thmapprox}]
	
		Taking $\eta=\delta/G_0$, we get under condition $\|G_\varepsilon-\hat G_\varepsilon\|\leq \delta$ that 
		\begin{equation}\label{subsets}
		\begin{gathered}
		R^\varepsilon\left(\frac{1}{1+\eta}\right)\subset \hat R^\varepsilon\subset R^\varepsilon(1+\eta) 
		\quad \textrm{ and } \quad 
		\hat R^\varepsilon\left(\frac{1}{1+\eta}\right)\subset  R^\varepsilon\subset \hat R^\varepsilon(1+\eta).
		\end{gathered}
		\end{equation}
		Applying Lemma \ref{lemmaestimates}, we get 
		\begin{equation}\label{eq1048}
		\begin{gathered}
		|||u_\varepsilon|||_{W^{1,p}(R^\varepsilon\backslash\hat R^\varepsilon)}^p\leq |||u_\varepsilon|||_{W^{1,p}\left(R^\varepsilon\backslash R^\varepsilon\left(\frac{1}{1+\eta}\right)\right)}^p\leq c\rho(\eta) \quad \textrm{ and } \\
		|||u_\varepsilon|||_{W^{1,p}(\hat R^\varepsilon\backslash  R^\varepsilon)}^p\leq |||u_\varepsilon|||_{W^{1,p}\left(\hat R^\varepsilon\backslash \hat R^\varepsilon\left(\frac{1}{1+\eta}\right)\right)}^p\leq c\rho(\eta).
		\end{gathered}
		\end{equation}
		Now, let us focus to the first term of \eqref{thmapproxineq}. We have
		\begin{equation}\label{eq1055}
		\begin{gathered}
		V_\varepsilon (u_\varepsilon)\leq V_\varepsilon\left(\left(P_{1+\eta} \hat u_\varepsilon\right)|_{R^\varepsilon}\right)\\
		=\dfrac{1}{p}|||\left(P_{1+\eta} \hat u_\varepsilon\right)|_{R^\varepsilon}|||_{W^{1,p}(R^\varepsilon)}-\dfrac{1}{\varepsilon}\int_{R^\varepsilon}f^\varepsilon \left(P_{1+\eta} \hat u_\varepsilon\right)|_{R^\varepsilon} dxdy\\
		\leq \dfrac{1}{p}|||P_{1+\eta} \hat u_\varepsilon |||_{W^{1,p}(\hat R^\varepsilon(1+\eta))}-\dfrac{1}{\varepsilon}\int_{\hat R^\varepsilon}f^\varepsilon  P_{1+\eta} \hat u_\varepsilon dxdy
		+\dfrac{1}{\varepsilon}\int_{\hat R^\varepsilon\backslash R^\varepsilon}f^\varepsilon  P_{1+\eta} \hat u_\varepsilon dxdy.
		\end{gathered}
		\end{equation}	
		But from the definition of $P_{1+\eta}$ (see \eqref{operatorcomparison}) and a change of variables, we get 
		\begin{equation}\label{1064lp}
		||| P_{1+\eta}\hat u_\varepsilon|||^p_{W^{1,p}\left(\hat R^\varepsilon(1+\eta)\right)}\leq (1+\eta)||| \hat u_\varepsilon|||^p_{W^{1,p}\left(\hat R^\varepsilon\right)}.
		\end{equation}
		%		Moreover, 
		%		$$
		%		\dfrac{1}{\varepsilon}\int_{\hat R^\varepsilon}f^\varepsilon P_{1+\eta} \hat u_\varepsilon dxdy=	\dfrac{1}{\varepsilon}\int_{\hat R^\varepsilon}f^\varepsilon \left(P_{1+\eta} \hat u_\varepsilon-\hat u_\varepsilon\right) dxdy+	\dfrac{1}{\varepsilon}\int_{\hat R^\varepsilon}f^\varepsilon  \hat u_\varepsilon dxdy
		%		$$
		From Lemma \ref{lemmaestimates} we get
		\begin{equation}\label{1072lp}
		\dfrac{1}{\varepsilon}\int_{\hat R^\varepsilon}f^\varepsilon \left(P_{1+\eta} \hat u_\varepsilon-\hat u_\varepsilon\right) dxdy\leq |||f^\varepsilon|||_{L^{p'}(\hat{R}^\varepsilon)}|||P_{1+\eta} \hat u_\varepsilon-\hat u_\varepsilon|||_{L^p(\hat R^\varepsilon)}\leq c\rho(\eta)^{1/p}.
		\end{equation}
		Also, by \eqref{subsets}, \eqref{eq1048} and Lemma \ref{lemmaestimates}, we obtain
		\begin{equation}\label{1076lp}
		\dfrac{1}{\varepsilon}\int_{\hat R^\varepsilon\backslash R\varepsilon}f^\varepsilon  P_{1+\eta}\hat u_\varepsilon dxdy\leq |||f^\varepsilon|||_{L^{p'}(\hat{R}^\varepsilon)}|||P_{1+\eta} \hat u_\varepsilon|||_{L^p(\hat R^\varepsilon\backslash R^\varepsilon)}\leq c\rho(\eta)^{1/p}.
		\end{equation}

		Hence, using \eqref{functionals}, \eqref{eq1055}, \eqref{1064lp}, Proposition \ref{uniformlimtation}, \eqref{1072lp}, \eqref{1076lp}, we get
		\begin{equation}\label{thm3.3ineq01}
		\begin{gathered}
		V_\varepsilon(u_\varepsilon)
		\leq \dfrac{(1+\eta)}{p}||| \hat u_\varepsilon|||^p_{W^{1,p}\left(\hat R^\varepsilon\right)}-\dfrac{1}{\varepsilon}\int_{\hat R^\varepsilon}f^\varepsilon  P_{1+\eta} \hat u_\varepsilon dxdy
		+\dfrac{1}{\varepsilon}\int_{\hat R^\varepsilon\backslash R^\varepsilon}f^\varepsilon  P_{1+\eta} \hat u_\varepsilon dxdy\\
		=(1+\eta)\hat{V}_\varepsilon(\hat{u}_\varepsilon)+\dfrac{(1+\eta)}{\varepsilon}\int_{\hat R^\varepsilon}f^\varepsilon \hat u_\varepsilon dxdy
		-\dfrac{1}{\varepsilon}\int_{\hat R^\varepsilon}f^\varepsilon P_{1+\eta} \hat u_\varepsilon dxdy
		+\dfrac{1}{\varepsilon}\int_{\hat R^\varepsilon\backslash R^\varepsilon}f^\varepsilon  P_{1+\eta} \hat u_\varepsilon dxdy\\
		=(1+\eta)\hat{V}_\varepsilon(\hat{u}_\varepsilon)+\dfrac{\eta}{\varepsilon}\int_{\hat R^\varepsilon}f^\varepsilon \hat u_\varepsilon dxdy
		+\dfrac{1}{\varepsilon}\int_{\hat R^\varepsilon}f^\varepsilon (\hat u_\varepsilon-P_{1+\eta} \hat u_\varepsilon) dxdy
		+\dfrac{1}{\varepsilon}\int_{\hat R^\varepsilon\backslash R^\varepsilon}f^\varepsilon  P_{1+\eta} \hat u_\varepsilon dxdy\\
		\leq(1+\eta)\hat{V}_\varepsilon(\hat{u}_\varepsilon)+\eta|||f^\varepsilon|||_{L^{p'}(\hat{R}^\varepsilon)} |||\hat u_\varepsilon|||_{L^{p}(\hat{R}^\varepsilon)}+c\rho(\eta)^{1/p}\\
		=(1+\eta)\hat{V}_\varepsilon(\hat{u}_\varepsilon)+ \bar{\rho}(\eta),
		\end{gathered}
		\end{equation}
		where $\bar{\rho}$ denotes a function such that $\bar\rho(\eta)\to0$ as $\eta\to0$.
		
		On the other hand, by \eqref{functionals}, \eqref{etanorm}, \eqref{relationnorm}, \eqref{subsets} and Proposition \ref{proposicaoplaplacian01}, we get, for $p\geq 2$, 
		\begin{equation}\label{thm3.3ineq02}
		\begin{gathered}
		V_\varepsilon(u_\varepsilon)=\dfrac{1}{p}|||u_\varepsilon |||_{W^{1,p}(R^\varepsilon)}^p-\dfrac{1}{\varepsilon}\int_{R^\varepsilon} f^\varepsilon u_\varepsilon dxdy\\
		= \dfrac{1}{p}|||P_{1+\eta}u_\varepsilon |||_{W^{1,p}_{1+\eta}(R^\varepsilon(1+\eta))}^p-\dfrac{1}{\varepsilon}\int_{R^\varepsilon} f^\varepsilon u_\varepsilon dxdy\\
		\geq \dfrac{1}{p(1+\eta)}|||P_{1+\eta}u_\varepsilon |||_{W^{1,p}(\hat R^\varepsilon)}^p-\dfrac{1}{\varepsilon}\int_{R^\varepsilon} f^\varepsilon u_\varepsilon dxdy\\
		\geq \dfrac{1}{p(1+\eta)}\left[|||\hat u_\varepsilon |||_{W^{1,p}(\hat R^\varepsilon)}^p+\dfrac{p}{\varepsilon}\int_{\hat{R}^\varepsilon}\left(a_p(\nabla \hat{u}_\varepsilon)\nabla(P_{1+\eta}u_\varepsilon-\hat u_\varepsilon)\right.\right.\\+\left.\left.a_p(\hat{u}_\varepsilon)(P_{1+\eta}u_\varepsilon-\hat u_\varepsilon)\right)dxdy+c_p|||P_{1+\eta}u_\varepsilon -\hat u_\varepsilon |||^p_{W^{1,p}(\hat R^\varepsilon)}\right] -\dfrac{1}{\varepsilon}\int_{R^\varepsilon} f^\varepsilon u_\varepsilon dxdy\\
		=\dfrac{1}{p(1+\eta)}\left[p\hat V(\hat u_\varepsilon)+\dfrac{p}{\varepsilon}\int_{\hat R^\varepsilon} f^\varepsilon \hat u_\varepsilon dxdy+\dfrac{p}{\varepsilon}\int_{\hat{R}^\varepsilon}f^\varepsilon(P_{1+\eta}u_\varepsilon-\hat u_\varepsilon)dxdy\right. \\+\left.c_p|||P_{1+\eta}u_\varepsilon -\hat u_\varepsilon |||^p_{W^{1,p}(\hat R^\varepsilon)}\right] -\dfrac{1}{\varepsilon}\int_{R^\varepsilon} f^\varepsilon u_\varepsilon dxdy\\
		=\dfrac{1}{(1+\eta)}\hat V(\hat u_\varepsilon)+\dfrac{1}{\varepsilon}\int_{\hat R^\varepsilon} f^\varepsilon\dfrac{1}{1+\eta}P_{1+\eta}u_\varepsilon dxdy-\dfrac{1}{\varepsilon}\int_{R^\varepsilon} f^\varepsilon u_\varepsilon dxdy+\dfrac{c_p}{p(1+\eta)}|||P_{1+\eta}u_\varepsilon -\hat u_\varepsilon |||^p_{W^{1,p}(\hat R^\varepsilon)}.
		\end{gathered}
		\end{equation}

		Now, due \eqref{lemmacompineq04}, a H\"older's inequality and Lemma \ref{lemmaestimates}, we obtain
		\begin{equation}\label{thm3.3ineq03}
		\begin{gathered}
		\left|\dfrac{1}{\varepsilon}\int_{\hat R^\varepsilon} f^\varepsilon\dfrac{1}{1+\eta}P_{1+\eta}u_\varepsilon dxdy-\dfrac{1}{\varepsilon}\int_{R^\varepsilon} f^\varepsilon u_\varepsilon dxdy\right|\\
		\leq \left|\dfrac{1}{\varepsilon}\int_{\hat R^\varepsilon\backslash R^\varepsilon} f^\varepsilon P_{1+\eta}u_\varepsilon dxdy\right|+\left|\dfrac{1}{\varepsilon}\int_{R^\varepsilon\backslash \hat R^\varepsilon} f^\varepsilon P_{1+\eta}u_\varepsilon dxdy\right|\\
		\qquad\qquad\qquad\qquad\qquad+\left|\dfrac{1}{(1+\eta)\varepsilon} \int_{  R^\varepsilon} f^\varepsilon P_{1+\eta}u_\varepsilon dxdy-\dfrac{1}{\varepsilon}\int_{R^\varepsilon} f^\varepsilon u_\varepsilon dxdy\right|
		\leq c\rho(\delta)^{1/p}.
		\end{gathered}
		\end{equation}
		First, one can put together \eqref{thm3.3ineq01} and \eqref{thm3.3ineq02}, and then use \eqref{thm3.3ineq03} to lead us to 
		\begin{equation*}
		\dfrac{c_p}{p(1+\eta)}|||P_{1+\eta}u_\varepsilon -\hat u_\varepsilon |||^p_{W^{1,p}(\hat R^\varepsilon)}\leq \dfrac{\eta^{2}+2\eta}{1+\eta}\hat V_\varepsilon(\hat u_\varepsilon)+\rho(\delta)^{1/p}+\bar\rho(\delta),
		\end{equation*}
		which implies that
		\begin{equation}\label{corollaryconclusionlp}
		|||P_{1+\eta}u_\varepsilon -\hat u_\varepsilon |||^p_{W^{1,p}(\hat R^\varepsilon)}\leq \hat \rho (\delta),
		\end{equation}
		for $p\geq 2$, where $\hat \rho(\eta)$ is a nonnegative function that tends to zero as $\eta\to 0$.
		
		From Lemma \ref{lemmaestimates}, we have $|||u_\varepsilon-P_{1+\eta}u_\varepsilon|||^p_{W^{1,p}(R^\varepsilon)}\leq c\rho(\delta)$. It follows from \eqref{corollaryconclusionlp} that
		$$
		|||u_\varepsilon-\hat u_\varepsilon|||^p_{W^{1,p}(R^\varepsilon\cap \hat R^\varepsilon)}\leq \tilde \rho(\delta),
		$$
		for $p\geq 2$, where $\tilde \rho(\eta)$ is a nonnegative function that tends to zero as $\eta\to 0$.
		
		For $1<p<2$, we can perform analogous argument to obtain 
		\begin{equation*}
		\begin{gathered}
		\dfrac{c_p}{p(1+\eta)}\left[\dfrac{1}{\varepsilon}\int_{R^\varepsilon}\left|\nabla P_{1+\eta}u_\varepsilon-\nabla  u_\varepsilon\right|^2\left(1+|\nabla P_{1+\eta}u_\varepsilon|+|\nabla u_\varepsilon|\right)^{p-2} dxdy\right.\\
		\left.+\dfrac{1}{\varepsilon}\int_{R^\varepsilon}\left|  P_{1+\eta}u_\varepsilon-  u_\varepsilon\right|^2\left(1+|  P_{1+\eta}u_\varepsilon|+|  u_\varepsilon|\right)^{p-2} dxdy\right] 
		\leq \dfrac{\eta}{1+\eta}\hat V_\varepsilon(\hat u_\varepsilon)+\rho(\delta)^{1/p}
		\end{gathered}
		\end{equation*}
		which gives us
		$$
		|||u_\varepsilon-\hat u_\varepsilon|||^p_{W^{1,p}(R^\varepsilon\cap \hat R^\varepsilon)}\leq \tilde \rho(\delta),
		$$
		where $\tilde \rho(\eta)$ is a nonnegative function which tends to zero as $\eta\to 0$.
	\end{proof}
	\begin{remark}\label{corollarycomparison}
		It follows from \eqref{corollaryconclusionlp} that there exists $\rho:[0,\infty)\mapsto[0,\infty)$ such that
		$$
		|||P_{1+\delta/G_0}u_\varepsilon -\hat u_\varepsilon |||^p_{W^{1,p}(\hat R^\varepsilon)}\leq  \rho (\delta)
		$$
		with $\rho(\delta)\to 0$ as $\delta\to 0$ uniformly in $\varepsilon$ and any piecewise $C^1$ functions $G_\varepsilon$ and $\hat{G}_\varepsilon$ 		uniformly bounded with $\|G_\varepsilon-\hat G_\varepsilon\|_{L^\infty(0,1)}\leq \delta$
		and $f^\varepsilon\in L^{p'}(\mathbb{R}^2)$ satisfying $\|f^\varepsilon\|_{L^{p'}(\mathbb{R}^2)}\leq 1$.
		
	\end{remark}

	\section{The piecewise periodic case}\label{secpiecewise}
	
	Now, we analyze the limit of $\{u_\varepsilon\}_{\varepsilon>0}$ assuming the upper boundary of $R^\varepsilon$ is piecewise periodic. 
	
	More precisely, we assume $G$ satisfies \hyperref[hypG]{(H)} being independent on the first variable in each interval $(\xi_{i-1},\xi_i)\times\mathbb{R}$. 
	We suppose $G$ satisfies $G(x,y)=G_i(y)$ in $x\in I_i=(\xi_{i-1},\xi_i)$ with $G_i(y+L)=G_i(y)$ for all $y\in\mathbb{R}$. Moreover, we assume the function $G_i(\cdot)$ is $C^1$ for all $i=1,\dots,N$ and there exist $0<G_0<G_1$ such that $G_0\leq G_i(\cdot)\leq G_1$ for all $i=1,\dots,N$. 
	
	Notice that the domain $R^\varepsilon$ can now be rewritten as
	\begin{equation}\label{decompositionRepsilon}
	\begin{gathered}
	R^\varepsilon=\left(\bigcup_{i=1}^{N}R_i^\varepsilon\right)\cup \left(\bigcup_{i=1}^{N-1}
	\left\lbrace (\xi_i,y):0<y<\varepsilon\min\{G_{i-1}(\xi_i/\varepsilon),G_i(\xi_i/\varepsilon)\}\right\rbrace \right)
	\end{gathered}
	\end{equation}
	with 
	$$
	R^\varepsilon_i=\left\lbrace (x,y)\in\mathbb{R}: \xi_{i-1}<x<\xi_i,0<y<\varepsilon G_i(x/\varepsilon)\right\rbrace.
	$$
	See Figure \ref{figpw} which illustrates this piecewise periodic thin domain.
%	\begin{figure}[!h]
%		\centering
%		\includegraphics[width=.45\columnwidth]{piecewise} 
%		\caption{A piecewise periodic thin domain.}
%		\label{figpw} 
%	\end{figure} 

	We have the following result.
	\begin{theorem}\label{thmpiecewise}
		Let $u_\varepsilon$ be the solution of problem \eqref{problem} with $f^\varepsilon\in L^{p'}(R^\varepsilon)$ and $\left|\left|\left|f^\varepsilon\right|\right|\right|_{L^{p'}(R^\varepsilon)}\leq c$, for some $c>0$ independent of $\varepsilon>0$. Suppose the function
		$$
		\hat{f}^\varepsilon(x)=\frac{1}{\varepsilon}\int_0^{\varepsilon G\left(x,\frac{x}{\varepsilon}\right)}f(x,y)dy
		$$
		satisfies
		$$
		\hat{f}^\varepsilon\rightharpoonup\hat{f}\mbox{ weakly in }L^{p'}(0,1).
		$$
		
		Then, there exist $u\in W^{1,p}(0,1) $ and $u_1^i\in L^p((\xi_{i-1},\xi_i);W^{1,p}_\#(Y^*_i))$ such that
		\begin{eqnarray*}
			\left\lbrace \begin{array}{llll}
				\mathcal{T}^i_\varepsilon u_\varepsilon\rightharpoonup u\mbox{ weakly in } L^p((\xi_{i-1},\xi_i);W^{1,p}(Y^*_i)),\\
				\mathcal{T}^i_\varepsilon \left(\partial_x u_\varepsilon\right)\rightharpoonup \partial_xu+\partial_{y_1}u_1^i(x,y_1,y_2)\mbox{ weakly in } L^p\left((\xi_{i-1},\xi_i);W^{1,p}(Y^*_i)\right),\\
				\mathcal{T}^i_\varepsilon \left(\partial_y u_\varepsilon\right)\rightharpoonup \partial_{y_2}u_1^i(x,y_1,y_2)\mbox{ weakly in } L^p\left((\xi_{i-1},\xi_i);W^{1,p}(Y^*_i)\right)
			\end{array}\right.
		\end{eqnarray*}
		and $u$ is the solution of the problem
		\begin{equation}\label{homogenizedlimit}
		\int_0^1 \left\lbrace q(x)|u'|^{p-2}u'\varphi' +r(x)\,|u|^{p-2}u\varphi \right\rbrace dx=\int_0^1\hat{f}\varphi dx, \quad \varphi\in W^{1,p}(0,1),
		\end{equation}
		where  $q$, $r:(0,1)\to\mathbb{R}$ are piecewise constant functions given by
		$$
		\begin{gathered}
		q(x)=q_i, \quad \textrm{ if } x \in (\xi_{i-1}, \xi_i),\\
		r(x)=r_i, \quad \textrm{ if } x \in (\xi_{i-1}, \xi_i),\\
		\end{gathered}
		$$
		with $r_i$ and $q_i$ are given by
		\begin{equation}\label{qcoefsdefinition}
		\begin{gathered}
		q_i=\int_{Y^*_i}|\nabla v^i|^{p-2}\partial_{y_1} v^i \, dy_1dy_2 \\
		r_i=\dfrac{|Y^*_i|}{L}
		\end{gathered}
		\end{equation}
		where $v^i$ is the solution of the auxiliary problem
		\begin{eqnarray}\label{auxiliarproblem}
		\begin{gathered}
		\displaystyle\int_{Y^*_i}\left|\nabla v^i\right|^{p-2}\nabla v^i\nabla \psi dy_1dy_2=0, \quad \forall\psi\in W^{1,p}_{\#}(Y^*_i),\quad \left\langle \psi \right\rangle_{Y^{*}_{i}}=0\\
		(v^i-y_1)\in  W^{1,p}_{\#}(Y^*_i),\quad \left\langle v-y_{1} \right\rangle_{Y^{*}_{i}}=0.
		\end{gathered}
		\end{eqnarray} 
	\end{theorem}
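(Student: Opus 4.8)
The plan is to localize the problem on each periodic piece $R^\varepsilon_i$, extract unfolded limits there via Theorem~\ref{thmressonant}, glue the pieces into a single function $u\in W^{1,p}(0,1)$, pass to the limit in the variational formulation with both macroscopic and oscillating test functions, and finally identify the limit flux by a Minty--Browder argument carried out directly on the fixed cells.

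First I would collect the uniform bounds. By Proposition~\ref{uniformlimtation}, $|||u_\varepsilon|||_{W^{1,p}(R^\varepsilon)}$ and $|||\,|\nabla u_\varepsilon|^{p-2}\nabla u_\varepsilon\,|||_{L^{p'}(R^\varepsilon)}$ are bounded uniformly in $\varepsilon$, hence so are the corresponding scaled norms over each $R^\varepsilon_i\subset R^\varepsilon$. Applying Theorem~\ref{thmressonant} on each of the finitely many pieces gives, up to a subsequence, $u^i\in W^{1,p}(\xi_{i-1},\xi_i)$ and $u_1^i\in L^p((\xi_{i-1},\xi_i);W^{1,p}_\#(Y^*_i))$ with $\mathcal T^i_\varepsilon u_\varepsilon\to u^i$ strongly in $L^p((\xi_{i-1},\xi_i);W^{1,p}(Y^*_i))$, $\mathcal T^i_\varepsilon(\partial_x u_\varepsilon)\rightharpoonup (u^i)'+\partial_{y_1}u_1^i$ and $\mathcal T^i_\varepsilon(\partial_y u_\varepsilon)\rightharpoonup \partial_{y_2}u_1^i$ weakly. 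To glue the $u^i$ I would use the fixed-height mean $\bar u_\varepsilon(x)=\frac1{\varepsilon G_0}\int_0^{\varepsilon G_0}u_\varepsilon(x,y)\,dy$, which makes sense since $(0,1)\times(0,\varepsilon G_0)\subset R^\varepsilon$ because $G\ge G_0$. A Jensen/Poincaré computation yields $\|\bar u_\varepsilon\|_{W^{1,p}(0,1)}\le C\,|||u_\varepsilon|||_{W^{1,p}(R^\varepsilon)}$ together with $|||u_\varepsilon-\bar u_\varepsilon|||_{L^p(R^\varepsilon)}\le C\varepsilon$, so $\bar u_\varepsilon\rightharpoonup u$ in $W^{1,p}(0,1)$ and strongly in $C([0,1])$ and $L^p(0,1)$ for some $u\in W^{1,p}(0,1)$. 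Since $\mathcal T^i_\varepsilon\bar u_\varepsilon=\frac1{G_0}\int_0^{G_0}\mathcal T^i_\varepsilon u_\varepsilon\,dy_2$ (by the change of variables defining $\mathcal T^i_\varepsilon$) and $\mathcal T^i_\varepsilon\bar u_\varepsilon\to u$ strongly by Proposition~\ref{unfoldingproperties} (last item), comparison with $\mathcal T^i_\varepsilon u_\varepsilon\to u^i$ forces $u|_{(\xi_{i-1},\xi_i)}=u^i$; thus the stated convergences hold with $\varphi^i=u$.

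Next, set $\sigma_\varepsilon:=|\nabla u_\varepsilon|^{p-2}\nabla u_\varepsilon$, so $\mathcal T^i_\varepsilon\sigma_\varepsilon\rightharpoonup\sigma^i$ weakly in $L^{p'}((\xi_{i-1},\xi_i)\times Y^*_i)^2$ up to a subsequence. Testing \eqref{variationalproblem} with $\varphi=\varphi(x)\in W^{1,p}(0,1)$, dividing by $\varepsilon$, splitting over the pieces and unfolding by Proposition~\ref{unfoldingproperties} (property 6, the remainders on $R^\varepsilon_{1i}$ vanishing by property 11), using $\mathcal T^i_\varepsilon\varphi\to\varphi$, $\mathcal T^i_\varepsilon\varphi'\to\varphi'$ strongly, $\mathcal T^i_\varepsilon(|u_\varepsilon|^{p-2}u_\varepsilon)\to|u|^{p-2}u$ strongly (Remark~\ref{re483}) and $\hat f^\varepsilon\rightharpoonup\hat f$, I obtain
\[
\sum_{i=1}^N\frac1L\int_{\xi_{i-1}}^{\xi_i}\Big(\int_{Y^*_i}\sigma^i_1\,dy\Big)\varphi'\,dx+\int_0^1 r(x)|u|^{p-2}u\,\varphi\,dx=\int_0^1\hat f\varphi\,dx,\qquad\forall\,\varphi\in W^{1,p}(0,1).
\]
Testing instead with oscillating functions $\psi_\varepsilon(x,y)=\varepsilon\,\phi(x)\,w(x/\varepsilon,y/\varepsilon)$, $\phi\in C_c^\infty(\xi_{i-1},\xi_i)$, $w\in W^{1,p}_\#(Y^*_i)$ smooth, dividing by $\varepsilon$ and letting $\varepsilon\to0$ (the zeroth-order term and the $\varepsilon\phi'w$ term being $O(\varepsilon)$, using properties 2, 4 and 5 of the unfolding operator) gives the cell identity $\int_{Y^*_i}\sigma^i(x,\cdot)\cdot\nabla_y w\,dy=0$ for a.e.\ $x$ and all $w\in W^{1,p}_\#(Y^*_i)$.

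The decisive step is the identification $\sigma^i=|\xi^i|^{p-2}\xi^i$, where $\xi^i:=((u)'+\partial_{y_1}u_1^i,\partial_{y_2}u_1^i)$. Taking $\varphi=u_\varepsilon$ in \eqref{variationalproblem} gives $\frac1\varepsilon\int_{R^\varepsilon}|\nabla u_\varepsilon|^p=\frac1\varepsilon\int_{R^\varepsilon}f^\varepsilon u_\varepsilon-\frac1\varepsilon\int_{R^\varepsilon}|u_\varepsilon|^p$; from $|||u_\varepsilon-\bar u_\varepsilon|||_{L^p(R^\varepsilon)}\le C\varepsilon$ and $\frac1\varepsilon\int_{R^\varepsilon}f^\varepsilon\bar u_\varepsilon=\int_0^1\hat f^\varepsilon\bar u_\varepsilon$ one gets $\frac1\varepsilon\int_{R^\varepsilon}f^\varepsilon u_\varepsilon\to\int_0^1\hat f u$, while unfolding (with property 11 and the $2$D Sobolev embedding on $R^\varepsilon_i$ to kill the remainder) gives $\frac1\varepsilon\int_{R^\varepsilon}|u_\varepsilon|^p\to\int_0^1 r|u|^p$. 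Combining this with the macroscopic equation tested at $\varphi=u$ and the cell identity with $w=u_1^i(x,\cdot)$ shows $\lim_{\varepsilon\to0}\sum_i\frac1L\int_{(\xi_{i-1},\xi_i)\times Y^*_i}\mathcal T^i_\varepsilon\sigma_\varepsilon\cdot\mathcal T^i_\varepsilon(\nabla u_\varepsilon)=\sum_i\frac1L\int_{(\xi_{i-1},\xi_i)\times Y^*_i}\sigma^i\cdot\xi^i$. Since the unfolding operator commutes with the Nemytskii map $s\mapsto|s|^{p-2}s$, i.e.\ $\mathcal T^i_\varepsilon\sigma_\varepsilon=|\mathcal T^i_\varepsilon(\nabla u_\varepsilon)|^{p-2}\mathcal T^i_\varepsilon(\nabla u_\varepsilon)$, this is exactly the input for Minty's trick: from the pointwise monotonicity inequality of Proposition~\ref{proposicaoplaplaciano}, integrated over $\bigsqcup_i(\xi_{i-1},\xi_i)\times Y^*_i$ against an arbitrary $\Psi\in L^p$, passing to the limit, choosing $\Psi=\xi^i-t\Theta$ with $\Theta\in L^p$ arbitrary and letting $t\to0^+$ (using the continuity of $s\mapsto|s|^{p-2}s$, Corollary~\ref{corolarioplaplaciano}), I conclude $\sigma^i=|\xi^i|^{p-2}\xi^i$ a.e. Feeding this back into the cell identity shows $u_1^i(x,\cdot)$ solves $\int_{Y^*_i}|( (u)'(x),0)+\nabla_y u_1^i|^{p-2}(((u)'(x),0)+\nabla_y u_1^i)\cdot\nabla_y w\,dy=0$; by homogeneity and the uniqueness of \eqref{auxiliarproblem} one gets $u_1^i(x,y)=(u)'(x)\,(v^i(y)-y_1)$, hence $\xi^i=(u)'(x)\nabla v^i$ and $\int_{Y^*_i}\sigma^i_1\,dy=\big(\int_{Y^*_i}|\nabla v^i|^{p-2}\partial_{y_1}v^i\,dy\big)\,|(u)'|^{p-2}(u)'$. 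Substituting into the macroscopic equation gives \eqref{homogenizedlimit} with $q_i,r_i$ as in \eqref{qcoefsdefinition}; finally, since $q_i>0$ (as shown in the Introduction) and $r_i\ge G_0>0$, problem \eqref{homogenizedlimit} has a unique solution by Minty--Browder, so $u$ is independent of the subsequence and the whole family converges. I expect the main obstacle to be the energy passage $\frac1\varepsilon\int_{R^\varepsilon}|\nabla u_\varepsilon|^p\to\sum_i\frac1L\int\sigma^i\cdot\xi^i$ underlying the Minty step: it is not a product of weak limits and can only be obtained by simultaneously exploiting the macroscopic equation, the cell identity, and the careful treatment of $\frac1\varepsilon\int_{R^\varepsilon}f^\varepsilon u_\varepsilon$ via the auxiliary mean $\bar u_\varepsilon$ (plus the technical control of the leftover slabs $R^\varepsilon_{1i}$).
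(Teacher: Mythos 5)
Your proposal is correct and follows the paper's overall strategy — unfolding on each periodic piece via Theorem \ref{thmressonant}, limit passage with macroscopic and oscillating test functions to get the averaged equation and the cell identity, identification of the nonlinear flux by monotonicity, and identification of $u_1^i$ through \eqref{auxiliarproblem} by homogeneity and uniqueness — but two of your sub-steps genuinely differ from the paper's, and both are worth keeping. First, you glue the local limits $u^i$ into a single $u\in W^{1,p}(0,1)$ at the outset by comparing $\mathcal T^i_\varepsilon u_\varepsilon$ with the unfolded vertical average $\bar u_\varepsilon$ over the fixed strip $(0,\varepsilon G_0)$; the paper instead works with test functions supported in $\bigcup_i(\xi_{i-1},\xi_i)$ and only identifies $u^i=u$ at the very end via uniqueness of the limit problem. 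Your route is tighter: it legitimizes testing with arbitrary $\varphi\in W^{1,p}(0,1)$ with no hidden interface terms, whereas the paper's passage from \eqref{variationalpiecewise} (which a priori holds only for test functions vanishing near the $\xi_i$) to the full problem \eqref{homogenizedlimit} tacitly uses continuity of the limit across the interfaces. Second, for the flux identification you run the classical Minty trick against an arbitrary field $\Psi$, while the paper tests monotonicity against the specific corrector $W_\varepsilon=(\partial_xu^i,0)+\nabla_yu_1^i(\cdot,\cdot/\varepsilon,\cdot/\varepsilon)$ and deduces the strong convergence \eqref{correctorthm0001} of the unfolded gradients; both variants hinge on exactly the same energy identity, and your derivation of $\tfrac1\varepsilon\int_{R^\varepsilon}f^\varepsilon u_\varepsilon\to\int_0^1\hat f u\,dx$ via $|||u_\varepsilon-\bar u_\varepsilon|||_{L^p(R^\varepsilon)}\le C\varepsilon$ is a clean substitute for the paper's unfolded version, while the paper's variant has the side benefit of producing a corrector result. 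The technical points you flag — the leftover slabs $R^\varepsilon_{1i}$ and the fact that $|\nabla u_\varepsilon|^p$ is only bounded in $L^1$ so that property 11 of Proposition \ref{unfoldingproperties} does not apply verbatim — are real (and present in the paper as well), but they are harmless for the Minty inequality, since those remainders are nonnegative and enter with the favorable sign in the $\limsup$ bound you actually need.
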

	\begin{proof}
		
		First, by \eqref{decompositionRepsilon}, we can rewrite \eqref{variationalproblem} taking account the partition $\{\xi_i\}_{i=1}^N$ as
		\begin{equation}\label{variationalproblempartition}
		\sum_{i=1}^{N}\int_{R^\varepsilon_i} \left\{ |\nabla u_\varepsilon|^{p-2}\nabla u_\varepsilon\nabla \varphi +|u_\varepsilon|^{p-2}u_\varepsilon\varphi \right\}dxdy=\int_{R^\varepsilon} f^\varepsilon\varphi \, dxdy, \qquad \varphi \in W^{1,p}(R^\varepsilon).
		\end{equation}
		
		Hence, using \eqref{variationalproblempartition} and Proposition \ref{unfoldingproperties}, we obtain from \eqref{variationalproblem} with test functions $\varphi(x,y)=\varphi_i(x) \in C_0^\infty(\xi_{i-1},\xi_i)$ that 
		\begin{equation} \label{variationalunfolded}
		\begin{gathered}
		\int_{(\xi_{i-1},\xi_i)\times Y^*_i}\mathcal{T}^i_\varepsilon\left(\left|\nabla u_\varepsilon\right|^{p-2}\nabla u_\varepsilon\right)\mathcal{T}^i_\varepsilon\nabla\varphi_i dxdy_1dy_2+\frac{L}{\varepsilon}\int_{R_{1i}^\varepsilon}\left|\nabla u_\varepsilon\right|^{p-2}\nabla u_\varepsilon\nabla\varphi_i dxdy\\
		+\int_{(\xi_{i-1},\xi_i)\times Y^*_i}\mathcal{T}^i_\varepsilon\left(\left| u_\varepsilon\right|^{p-2} u_\varepsilon\right)\mathcal{T}^i_\varepsilon\varphi_i dxdy_1dy_2+\frac{L}{\varepsilon}\int_{R_{1i}^\varepsilon}\left| u_\varepsilon\right|^{p-2} u_\varepsilon\varphi_i dxdy
		= \dfrac{L}{\varepsilon}\int_{R^\varepsilon}f^\varepsilon  \varphi_i dxdy.
		\end{gathered}
		\end{equation}
		
		By Proposition \ref{uniformlimtation} and Theorem \ref{thmressonant}, there exist 
		$$u^i\in W^{1,p}(\xi_{i-1},\xi_i), \quad u_1^i\in L^p((\xi_{i-1},\xi_i);W^{1,p}_\#(Y^*_i)) \quad \textrm{ and } \quad a_0^i\in L^p\left((\xi_{i-1},\xi_i)\times Y^*_i\right)^2$$ 
		such that, up to subsequences,
		\begin{eqnarray}\label{unfoldingconvergence}
		\left\lbrace \begin{array}{llll}
		\mathcal{T}^i_\varepsilon u_\varepsilon\rightarrow u^i\mbox{ strongly in } L^p\left((\xi_{i-1},\xi_i)\times Y^*_i\right),\\
		\mathcal{T}^i_\varepsilon \left(\partial_x u_\varepsilon\right)\rightharpoonup \partial_xu^i+\partial_{y_1}u_1^i(x,y_1,y_2)\mbox{ weakly in } L^p\left((\xi_{i-1},\xi_i);W^{1,p}(Y^*_i)\right),\\
		\mathcal{T}^i_\varepsilon \left(\partial_y u_\varepsilon\right)\rightharpoonup \partial_{y_2}u_1^i(x,y_1,y_2)\mbox{ weakly in } L^p\left((\xi_{i-1},\xi_i);W^{1,p}(Y^*_i)\right),\\
		\mathcal{T}^i_\varepsilon\left(\left|\nabla u_\varepsilon\right|^{p-2}\nabla u_\varepsilon\right)\rightharpoonup a_0^i\mbox{ weakly in } L^p\left((\xi_{i-1},\xi_i)\times Y^*_i\right)^2.
		\end{array}\right.
		\end{eqnarray}
		
		We still have from Remark \ref{re483} that
		$$
		\left|\mathcal{T}^i_\varepsilon u_\varepsilon\right|^{p-2}\mathcal{T}^i_\varepsilon u_\varepsilon\rightarrow |u^i|^{p-2}u^i\mbox{ strongly in } L^{p'}\left((\xi_{i-1},\xi_i)\times Y^*_i\right).
		$$
		Then, we can pass to the limit in \eqref{variationalunfolded} getting 
		\begin{equation}\label{prehomogenized}
		\dfrac{1}{L}\int_{(\xi_{i-1},\xi_i)\times Y^*_i}a_0^i\nabla\varphi_i +\left|u^i\right|^{p-2}u^i\varphi_i dxdy_1dy_2=\int_0^1 \hat{f}\varphi_i dx
		\quad \textrm{ for each } i =1, ..., N.
		\end{equation}
		
		Now take $\phi\in C_0^\infty\left(\xi_{i-1},\xi_i\right)$ and $\psi\in W^{1,p}_\#(Y^*_i)$. Extend $\psi$ periodically in the variable $y_1$ and define the sequence
		\begin{equation*}
		v_\varepsilon(x,y)=\varepsilon\phi(x)\psi\left(\frac{x}{\varepsilon},\frac{y}{\varepsilon}\right).
		\end{equation*}
		We have
		\begin{eqnarray*}
			\mathcal{T}^i_\varepsilon v_\varepsilon\rightarrow 0\mbox{ strongly in }L^p\left((\xi_{i-1},\xi_i)\times Y^*_i\right),\\
			\mathcal{T}^i_\varepsilon(\partial_x v_\varepsilon)\rightarrow \phi \partial_{y_1}\psi \mbox{ strongly in }L^p\left((\xi_{i-1},\xi_i)\times Y^*_i\right),\\
			\mathcal{T}^i_\varepsilon(\partial_y v_\varepsilon)\rightarrow \phi \partial_{y_2}\psi \mbox{ strongly in }L^p\left((\xi_{i-1},\xi_i)\times Y^*_i\right).
		\end{eqnarray*}
		
		Thus, taking $v_\varepsilon$ as a test function in \eqref{variationalunfolded}, we obtain at $\varepsilon=0$ that 
		\begin{eqnarray}\label{preaxiliar}
		\int_{(\xi_{i-1},\xi_i)\times Y^*_i}a_0^i\phi(x)\nabla_y\psi dxdy_1dy_2=0.
		\end{eqnarray}
		
		Hence, we get from \eqref{preaxiliar} and the density of the tensor product 
		$\quad C_0^\infty(\xi_{i-1},\xi_i) \otimes W^{1,p}_\#(Y^*_i)$ in $L^p((\xi_{i-1},\xi_i);W^{1,p}_\#(Y^*_i))$ that
		\begin{eqnarray}\label{preaxiliar01}
		\int_{(\xi_{i-1},\xi_i)\times Y^*_i}a_0^i \, \nabla_y\psi \, dxdy_1dy_2=0, \quad \forall\psi\in L^p((\xi_{i-1},\xi_i);W^{1,p}_\#(Y^*_i)).
		\end{eqnarray}
		
		Now, let us identify $a_0^i$ given by \eqref{unfoldingconvergence} for each $i$. 
		For this sake, take $u_1\in L^p((\xi_{i-1},\xi_i);W^{1,p}_\#(Y^*_i))$ and $u^i_1\in W^{1,p}(\xi_{i-1},\xi_i)$ 
		given by \eqref{unfoldingconvergence}. Extend $\nabla_y u_1^i$ periodically in the $y_1$-direction, and then define 
		\begin{equation*} \label{eqW_}
		W_\varepsilon(x,y)=\left(\partial_xu^i(x),0\right)+\nabla_y u_1^i\left(x,\frac{x}{\varepsilon},\frac{y}{\varepsilon}\right),\quad (x,y)\in R^{\varepsilon}_{i}.
		\end{equation*}
		
		Notice that $W_\varepsilon\in L^p(R^\varepsilon_i)\times L^p(R^\varepsilon_i)$, and due to  Proposition \ref{convergenceplaplacetype}, we have
		\begin{equation}\label{convWeps}
		\begin{gathered}
		\mathcal{T}^i_\varepsilon W_\varepsilon\rightarrow \left(\partial_xu^i,0\right)+\nabla_y u_1^i, \quad \textrm{ and } \\
		\mathcal{T}^i_\varepsilon\left(|W_\varepsilon|^{p-2}W_\varepsilon\right)\rightarrow \left|\left(\partial_xu^i,0\right)+\nabla_y u_1^i\right|^{p-2}\left[\left(\partial_xu^i,0\right)+\nabla_y u_1^i\right]   
		\end{gathered}
		\end{equation}
		strongly in $L^p\left((\xi_{i-1},\xi_i)\times Y^*_i\right)^2$ and $L^{p'}\left((\xi_{i-1},\xi_i)\times Y^*_i\right)^2$ respectively.
		Moreover, we can see that the right hand side of the inequality 
		\begin{equation} \label{693}
		0\leq\int_{(\xi_{i-1},\xi_i)\times Y^*_i}\mathcal{T}^i_\varepsilon\left[|\nabla u_\varepsilon|^{p-2}\nabla u_\varepsilon-|W_\varepsilon|^{p-2}W_\varepsilon\right]\mathcal{T}^i_\varepsilon\left(\nabla u_\varepsilon-W_\varepsilon\right)dxdy_1dy_2
		\end{equation}
		converges to zero as $\varepsilon \to 0$. 
		Notice that by the monotonicity of $|\cdot|^{p-2}\cdot$ (see Proposition \ref{proposicaoplaplaciano}) inequality \eqref{693} is obtained. To pass the limit in \eqref{693}, we evaluate each term of the integral. 
		Using \eqref{variationalunfolded}, \eqref{unfoldingconvergence} and denoting $dY=dy_1 dy_2$, we get that 
		\begin{equation*}
		\begin{gathered}
		\lim_{\varepsilon\rightarrow 0} \int_{(\xi_{i-1},\xi_i)\times Y^*_i}\mathcal{T}^i_\varepsilon\left(|\nabla u_\varepsilon|^{p-2}\nabla u_\varepsilon\right)\mathcal{T}^i_\varepsilon(\nabla u_\varepsilon)dxdY  \\
		= \lim_{\varepsilon\rightarrow 0}\left[\int_{(\xi_{i-1},\xi_i)\times Y^*_i}\mathcal{T}^i_\varepsilon f^\varepsilon \mathcal{T}^i_\varepsilon u_\varepsilon dxdY+\dfrac{L}{\varepsilon}\int_{R^\varepsilon_1}f^\varepsilon u_\varepsilon dxdy \right.\\
		\left. - \int_{(\xi_{i-1},\xi_i)\times Y^*_i}\mathcal{T}^i_\varepsilon\left(| u_\varepsilon|^{p-2} u_\varepsilon\right)\mathcal{T}^i_\varepsilon u_\varepsilon dxdY -\dfrac{L}{\varepsilon}\int_{R^\varepsilon_1}|u_\varepsilon|^{p-2} u_\varepsilon u_\varepsilon dxdy\right]\\
		=\int_{(\xi_{i-1},\xi_i)\times Y^*_i}\left(\hat{f}-|u^i|^{p-2}u^i\right)u^idxdY.
		\end{gathered}
		\end{equation*}
		Consequently, we get from \eqref{prehomogenized} that 
		\begin{equation} \label{eq777}
		\lim_{\varepsilon\rightarrow 0} \int_{(\xi_{i-1},\xi_i)\times Y^*_i}\mathcal{T}^i_\varepsilon\left(|\nabla u_\varepsilon|^{p-2}\nabla u_\varepsilon\right)\mathcal{T}^i_\varepsilon(\nabla u_\varepsilon)dxdY =
		\int_{(\xi_{i-1},\xi_i)\times Y^*_i} a_0 \nabla u^i dxdY.
		\end{equation}
		
		On the other hand, due to \eqref{unfoldingconvergence}, \eqref{convWeps} and \eqref{preaxiliar01}, we get
		\begin{equation}  \label{eq780}
		\begin{gathered}
		\lim_{\varepsilon\rightarrow 0}  \int_{(\xi_{i-1},\xi_i)\times Y^*_i}\mathcal{T}^i_\varepsilon\left(|\nabla u_\varepsilon|^{p-2}\nabla u_\varepsilon\right)\mathcal{T}^i_\varepsilon(W_\varepsilon)dxdY \\ =  \int_{(\xi_{i-1},\xi_i)\times Y^*_i} a_0^i \left(\nabla u^{i}+\nabla_yu_1\right)  dxdY  
		= \int_{(\xi_{i-1},\xi_i)\times Y^*_i} a_0^i\nabla u^i dxdY,
		\end{gathered}
		\end{equation}
		since $u_1^i\in L^p\left((\xi_{i-1},\xi_i);W^{1,p}_\#(Y^*_i)\right)$.
		
		Finally, we have 			
		\begin{equation} \label{eq790}
		\begin{gathered}
		\lim_{\varepsilon\rightarrow 0}   \int_{(\xi_{i-1},\xi_i)\times Y^*_i}\mathcal{T}^i_\varepsilon\left(|W_\varepsilon|^{p-2}W_\varepsilon\right)\mathcal{T}^i_\varepsilon\left(\nabla u_\varepsilon-W_\varepsilon\right)dxdY = 0,
		\end{gathered}
		\end{equation}
		by \eqref{convWeps} and \eqref{unfoldingconvergence}. 
		Indeed, we have $\mathcal{T}^i_\varepsilon (\nabla u_\varepsilon - W_\varepsilon) \rightharpoonup 0$ weakly in $L^p((\xi_{i-1},\xi_i)\times Y^*_i)$.
		
		Thus, from \eqref{eq777}, \eqref{eq780} and \eqref{eq790}, we can pass to the limit in \eqref{693} to get
		\begin{equation}\label{694}
		\int_{(\xi_{i-1},\xi_i)\times Y^*_i}\mathcal{T}^i_\varepsilon\left[|\nabla u_\varepsilon|^{p-2}\nabla u_\varepsilon-|W_\varepsilon|^{p-2}W_\varepsilon\right]\mathcal{T}^i_\varepsilon\left(\nabla u_\varepsilon-W_\varepsilon\right)dxdY\rightarrow 0.
		\end{equation}
		
		Next, suppose $p\geq 2$. By Proposition \ref{proposicaoplaplaciano} and \eqref{694}, we have 
		\begin{eqnarray*} \label{eq823}
		&& \int_{(\xi_{i-1},\xi_i)\times Y^*_i}\left|\mathcal{T}^i_\varepsilon \nabla u_\varepsilon-\mathcal{T}^i_\varepsilon W_\varepsilon\right|^p dxdY\nonumber\\
		&\leq&c \int_{(\xi_{i-1},\xi_i)\times Y^*_i}\mathcal{T}^i_\varepsilon\left(|\nabla u_\varepsilon|^{p-2}\nabla u_\varepsilon-|W_\varepsilon|^{p-2}W_\varepsilon\right)\left(\mathcal{T}^i_\varepsilon \nabla u_\varepsilon-\mathcal{T}^i_\varepsilon W_\varepsilon\right) dxdY\nonumber\\
		&\rightarrow& 0 \qquad \mbox{ as }\varepsilon\rightarrow 0.
		\end{eqnarray*}
		
		Now, suppose $1<p\leq2$. Then, 
		\begin{eqnarray*}\label{824}
		&&\int_{(\xi_{i-1},\xi_i)\times Y^*_i}\left|\mathcal{T}^i_\varepsilon \nabla u_\varepsilon-\mathcal{T}^i_\varepsilon W_\varepsilon\right|^p dxdY\nonumber\\
		&=&\int_{(\xi_{i-1},\xi_i)\times Y^*_i}\left|\mathcal{T}^i_\varepsilon \nabla u_\varepsilon-\mathcal{T}^i_\varepsilon W_\varepsilon\right|^p \frac{\left(1+|\mathcal{T}^i_\varepsilon\nabla u_\varepsilon|+|\mathcal{T}^i_\varepsilon W_\varepsilon|\right)^{\frac{(p-2)p}{2}}}{\left(1+|\mathcal{T}^i_\varepsilon\nabla u_\varepsilon|+|\mathcal{T}^i_\varepsilon W_\varepsilon|\right)^{\frac{(p-2)p}{2}}} dxdY.
		\end{eqnarray*}
		Hence, using a H\"older's inequality for the exponent $\frac{2}{p}$ (and its conjugate $\frac{2}{2-p}$) and Proposition \ref{proposicaoplaplaciano}, 
		\begin{eqnarray}\label{aob2}
		&&\int_{(\xi_{i-1},\xi_i)\times Y^*_i}\left|\mathcal{T}^i_\varepsilon \nabla u_\varepsilon-\mathcal{T}^i_\varepsilon W_\varepsilon\right|^p dxdY\nonumber\\
		&\leq& \left[\int_{(\xi_{i-1},\xi_i)\times Y^*_i}\left|\mathcal{T}^i_\varepsilon\nabla u_\varepsilon-\mathcal{T}^i_\varepsilon W_\varepsilon\right|^{2}\left(1+|\mathcal{T}^i_\varepsilon\nabla u_\varepsilon|+|\mathcal{T}^i_\varepsilon W_\varepsilon|\right)^{p-2}dxdY\right]^{p/2}\nonumber\\
		&&\quad\quad\quad \cdot\left[\int_{(\xi_{i-1},\xi_i)\times Y^*_i}\left(1+|\mathcal{T}^i_\varepsilon\nabla u_\varepsilon|+|\mathcal{T}^i_\varepsilon W_\varepsilon|\right)^{p}dxdY\right]^{(2-p)/2}\nonumber\\
		&\leq&c \int_{(\xi_{i-1},\xi_i)\times Y^*_i}\mathcal{T}^i_\varepsilon\left(|\nabla u_\varepsilon|^{p-2}\nabla u_\varepsilon-|W_\varepsilon|^{p-2}W_\varepsilon\right)\left(\mathcal{T}^i_\varepsilon \nabla u_\varepsilon-\mathcal{T}^i_\varepsilon W_\varepsilon\right) dxdY.\nonumber
		\end{eqnarray}
		Consequently, as $\varepsilon\to 0$, one gets for any $p>1$ that 
		\begin{equation}\label{correctorthm0001}
		\int_{(\xi_{i-1},\xi_i)\times Y^*_i}\left|\mathcal{T}^i_\varepsilon \nabla u_\varepsilon-\mathcal{T}^i_\varepsilon W_\varepsilon\right|^p dxdY\to 0.
		\end{equation}

		Now, we prove 
		$$
		\int_{(\xi_{i-1},\xi_i)\times Y^*_i}\mathcal{T}^i_\varepsilon\left(|\nabla u_\varepsilon|^{p-2}\nabla u_\varepsilon-|W_\varepsilon|^{p-2}W_\varepsilon\right)\varphi \, dxdY \rightarrow 0
		$$
		for any test function $\varphi\in C_0^\infty\left((\xi_{i-1},\xi_i)\times Y^*_i\right)\times C_0^\infty\left((\xi_{i-1},\xi_i)\times Y^*_i\right)$.
		
		Let $p\geq 2$. Therefore, from Corollary \eqref{corolarioplaplaciano} and H\"older's inequality, we get
		\begin{eqnarray}\label{eq609}
		&&\nonumber \int_{(\xi_{i-1},\xi_i)\times Y^*_i}\mathcal{T}^i_\varepsilon\left(|\nabla u_\varepsilon|^{p-2}\nabla u_\varepsilon-|W_\varepsilon|^{p-2}W_\varepsilon\right)\varphi dxdY\\\nonumber\nonumber
		&\leq& c \int_{(\xi_{i-1},\xi_i)\times Y^*_i}\left(1+|\mathcal{T}^i_\varepsilon \nabla u_\varepsilon|+|\mathcal{T}^i_\varepsilon W_\varepsilon|\right)^{p-2}\left|\mathcal{T}^i_\varepsilon \left(\nabla u_\varepsilon-W_\varepsilon\right)\right|dxdY\\\nonumber
		&\leq&\left[\int_{(\xi_{i-1},\xi_i)\times Y^*_i}\left(1+|\mathcal{T}^i_\varepsilon \nabla u_\varepsilon|+|\mathcal{T}^i_\varepsilon W_\varepsilon|\right)^{p}dxdY\right]^{1/p'} 
		\cdot \left[\int_{(\xi_{i-1},\xi_i)\times Y^*_i}\left|\mathcal{T}^i_\varepsilon \nabla u_\varepsilon-\mathcal{T}^i_\varepsilon W_\varepsilon\right|^p dxdY\right] ^{1/p}.
		\end{eqnarray}
		
		For $1<p<2$, we perform analogous arguments. Using Corollary \ref{corolarioplaplaciano} and H\"older's inequality, one gets 
		\begin{eqnarray}\label{eq617}
		&&\nonumber  \int_{(\xi_{i-1},\xi_i)\times Y^*_i}\mathcal{T}^i_\varepsilon\left(|\nabla u_\varepsilon|^{p-2}\nabla u_\varepsilon-|W_\varepsilon|^{p-2}W_\varepsilon\right)\varphi dxdY\\\nonumber
		&\leq&c  \int_{(\xi_{i-1},\xi_i)\times Y^*_i}\left|\mathcal{T}^i_\varepsilon\nabla u_\varepsilon-\mathcal{T}^i_\varepsilon W_\varepsilon\right|^{p-1}dxdY
		=  c\left[\int_{(\xi_{i-1},\xi_i)\times Y^*_i}\left|\mathcal{T}^i_\varepsilon\nabla u_\varepsilon-\mathcal{T}^i_\varepsilon W_\varepsilon\right|^{p}dxdY\right]^{1/p'}.
		\end{eqnarray}	
		Therefore, for any $p>1$, we get from \eqref{correctorthm0001}, \eqref{eq609} and \eqref{eq617} that
		\begin{equation}
		\int_{(\xi_{i-1},\xi_i)\times Y^*_i}\left[ a_0^i-a_p\left((\partial_xu^i,0)+\nabla_y u_1^i\right)\right]\varphi dx dY=0,
		\end{equation}	
		for any $\varphi\in C_0^\infty\left((\xi_{i-1},\xi_i)\times Y^*_i\right)\times C_0^\infty\left((\xi_{i-1},\xi_i)\times Y^*_i\right)$ and $i=1,\cdots,N$.
		
		Now, let us associate $a_0^i$ with the auxiliary problem \eqref{auxiliarproblem}. 
		We first rewrite \eqref{preaxiliar01} as 
		\begin{eqnarray}\label{preaxiliar02}
		\int_{(\xi_{i-1},\xi_i)\times Y^*_i}\left|\nabla u^i+\nabla_yu_1^i\right|^{p-2}\left(\nabla u^i+\nabla_yu_1^i\right)\nabla_y\psi \, dxdY=0,
		\end{eqnarray}
		for any $\psi \in L^p((\xi_{i-1},\xi_i);W^{1,p}_\#(Y^*_i))$.
		
		From Minty-Browder Theorem, one can prove that \eqref{preaxiliar02} sets a well posed problem in the following sense: for each $u \in W^{1,p}(\xi_{i-1},\xi_i)$, \eqref{preaxiliar02} possesses an unique solution $u_1 \in L^p((\xi_{i-1},\xi_i);W^{1,p}_\#(Y^*_i)/\mathbb{R})$. Notice that $W^{1,p}_\#(Y^*_i)/\mathbb{R}$ is identified with the closed subspace of $W^{1,p}_\#(Y^*_i)$ consisting of all its functions with zero average.

		Multiplying the solution $v^i$ of the equation \eqref{auxiliarproblem} by $(u^i)'$ we obtain a function $(u^i)'v$ which depends on $(x,y_1,y_2)$ and belongs to the space $L^p((\xi_{i-1},\xi_i);W^{1,p}_\#(Y^*_i))/\mathbb{R})$. Next, multiplying \eqref{auxiliarproblem} by $|\partial_xu^i|^{p-2}\partial_xu^i$ and $\phi\in C_0^\infty(\xi_{i-1},\xi_i)$, and integrating in $(\xi_{i-1},\xi_i)$,  we get
		\begin{equation*}
		\int_{(\xi_{i-1},\xi_i)\times Y^*_i} \phi \, \left|\partial_xu^i\nabla_yv^i\right|^{p-2}\partial_xu^i\nabla_yv^i\,  \nabla_y\varphi \, dxdY=0, \quad  
		\forall\varphi\in W^{1,p}_\#(Y^*_i)/\mathbb{R}.
		\end{equation*}
		Thus, from the density of tensor product $C_0^\infty(\xi_{i-1},\xi_i)\otimes (W^{1,p}_\#(Y^*_i)/\mathbb{R})$, we get
		\begin{equation}\label{preauxiliar03}
		\int_{(\xi_{i-1},\xi_i)\times Y^*_i}\left|\partial_xu^i\nabla_yv^i\right|^{p-2}\partial_xu^i\nabla_yv^i\nabla_y\psi dxdY=0, \quad \forall\psi\in L^p((\xi_{i-1},\xi_i);W^{1,p}_\#(Y^*_i)/\mathbb{R}).
		\end{equation}
		
		Hence, from equations \eqref{preaxiliar02} and \eqref{preauxiliar03}, we get, by uniqueness, that 
		$$
		\partial_xu^i(x)\nabla_yv^i(y_1,y_2)=(\partial_x u^i(x),0)+\nabla_yu_1^i(x,y_1,y_2) \mbox{ a.e. in }(\xi_{i-1},\xi_i)\times Y^*_i.
		$$
		
		Moreover, taking test functions $\varphi\in C_0^\infty\left(\displaystyle\cup_{i=1}^{N}(\xi_{i-1},\xi_i)\right)$ in \eqref{variationalproblempartition}, we can use Proposition \ref{unfoldingproperties} and \eqref{prehomogenized} in each interval $(\xi_{i-1},\xi_i)$ to obtain 
		$$
		\sum_{i=1}^{N} \dfrac{1}{L}\int_{(\xi_{i-1},\xi_i)\times Y^*_i}\left|\partial_xu^i\nabla_yv^i\right|^{p-2}\partial_xu^i\nabla_yv^i\nabla\varphi +\left|u^i\right|^{p-2}u^i\varphi dxdY=\int_0^1 \hat{f}\varphi dx,
		$$
		which is equivalent to
		\begin{equation}\label{variationalpiecewise}
		\sum_{i=1}^{N} \int_{\xi_{i-1}}^{\xi_i} \left[q^i|\partial_xu^i|^{p-2}\partial_xu^i\partial_x\varphi+\dfrac{|Y^*_i|}{L}\left|u^i\right|^{p-2}u^i\varphi\right] dx=	\int_0^1 \hat{f}\varphi dx.
		\end{equation}
		
		Then, using $q^i$ and  $r^i$ are given by \eqref{qcoefsdefinition}, one can obtain the following limit problem
		$$
		\int_0^1 \left[q(x)|u'|^{p-2}u'\varphi'+ r(x)|u|^{p-2}u\varphi \right]dx =\int_0^1\hat{f}\varphi dx,\qquad\forall \varphi\in W^{1,p}(0,1),
		$$
		which has a unique solution $u\in W^{1,p}(0,1)$, by Minty-Browder's Theorem. 
		Thus, from \eqref{variationalpiecewise}, we get 
		$$
		u(x)=u^i(x) \mbox{ a.e. in } (\xi_{i-1},\xi_i)
		$$
		concluding the proof since $q_i>0$ for each $i$. 
		Indeed, by \eqref{auxiliarproblem}, we can take $(v^i-y_1)\in W^{1,p}_{\#,0}(Y^*)$ as a test function, obtaining 
		$$
		\begin{gathered}
		q^i % =\dfrac{1}{L}\int_{Y^*_i}|\nabla v^i|^{p-2}\partial_{y_1}v^i dy_1dy_2\qquad\qquad\qquad\qquad\\
		= \dfrac{1}{L}\int_{Y^*_i}|\nabla v^i|^{p-2}\nabla v^i\left((1,0)+\nabla v^i-(1,0)\right) dy_1dy_2=\dfrac{1}{L}\int_{Y^*_i}|\nabla v^i|^p dy_1dy_2>0.
		\end{gathered}
		$$ 
	\end{proof}

\section{The locally periodic case}\label{secgencase}

	In this section, we provide the proof of our main result, Theorem \ref{mainthm}.
%	\begin{theorem}\label{mainthm}
%		Let $u_{\varepsilon}$ be the solution of \eqref{problem} with $f^\varepsilon\in L^{p'}(R^\varepsilon)$ uniformly bounded. Suppose that 
%		$$
%		\hat{f}^\varepsilon(x)=\frac{1}{\varepsilon}\int_0^{\varepsilon G\left(x,\frac{x}{\varepsilon}\right)}f^\varepsilon(x,y)dy
%		$$
%		satisfies
%		$$
%		\hat{f}^\varepsilon\rightharpoonup\hat{f}\mbox{ weakly in }L^{p'}(0,1).
%		$$
%		
%		Then, there exists $u\in W^{1,p}(0,1)$ such that
%		$$
%		\dfrac{L}{|Y^*(x)|\varepsilon}\int_0^{\varepsilon G\left(x,\frac{x}{\varepsilon}\right)}u_{\varepsilon}(x,y) dy \rightharpoonup u \mbox{ weakly in }L^{p}\left(0,1\right)
%		$$
%		with $u$ satisfying 
%		$$
%		\int_0^1 \left\lbrace q(x)|u'|^{p-2}u'\varphi' +r(x)\,|u|^{p-2}u\varphi \right\rbrace dx=\int_0^1\hat{f}\varphi dx,
%		$$
%		for any $\varphi\in W^{1,p}(0,1)$, where
%		\begin{equation*}
%		\begin{gathered}
%		q(x)=\dfrac{1}{L}\int_{Y^*(x)}|\nabla v(x,y_{1},y_{2})|^{p-2}\partial_{y_1} v(x,y_{1},y_{2}) \, dy_1dy_2,\\
%		r(x)= \dfrac{|Y^*(x)|}{L}
%		\end{gathered}
%		\end{equation*}
%		and $v(x,y_{1},y_{2})$ is the solution of the problem 
%		\begin{equation*} 
%		\begin{gathered}
%		\displaystyle\int_{Y^*(x)}\left|\nabla v\right|^{p-2}\nabla v\nabla \varphi \, dy_1dy_2=0, \quad \forall\varphi\in W^{1,p}_{\#}(Y^*(x)),\qquad\left\langle \varphi \right\rangle_{Y^*(x)} = 0\\
%		(v-y_1)\in  W^{1,p}_{\#}(Y^*(x)) \qquad \textrm{ with } \quad \left\langle (v-y_1) \right\rangle_{Y^*(x)} = 0.
%		\end{gathered}
%		\end{equation*}
%	\end{theorem}
	\begin{proof}[Proof of Theorem \ref{mainthm}]
		Using Proposition \ref{uniformlimtation} and Theorem \ref{locallyperiodicconvergence}, there is $u_0\in W^{1,p}(0,1)$ such that, up to subsequences,
		\begin{equation}\label{1160}
		T_\varepsilon^{lp}u_\varepsilon\rightharpoonup \chi u_0\mbox{ weakly in }L^p\left((0,1)\times (0,L)\times (0,G_1)\right),
		\end{equation}
		where $\chi$ is the characteristic function of $(0,1)\times Y^*(x)$.
		
		We show that $u_0$ satisfies the Neumann problem \eqref{homogenizedlimit}. To do this, we use a kind of discretization argument on the oscillating thin domains. 
		We first proceed as in \cite[Theorem 2.3]{AP10} fixing a parameter $\delta>0$ in order to set a function $G^\delta(x,y)$ with the property 
		$0\leq G^\delta(x,y)-G(x,y)\leq \delta$ in $(0,1)\times \mathbb{R}$ and such that the function 
		$G^\delta$ satisfies \hyperref[hypG]{(H)} and is piecewise periodic. 
		
		Let us construct this function. Recall that $G$ is uniformly $C^1$ in each of the domains $(\xi_{i-1},\xi_i)\times \mathbb{R}$. Also, it is periodic in the second variable. In particular, for $\delta>0$ small enough and for a fixed $z\in (\xi_{i-1},\xi_i)$ we have that there exists a small interval $(z-\eta,z+\eta)$ with $\eta$ depending only on $\delta$ such that $|G(x,y)-G(z,y)|+|\partial_y G(x,y) - \partial_y G(z,y)|<\delta/2$ for all $x\in (z-\eta,z+\eta)\cap (\xi_{i-1},\xi_i)$ and for all $y\in\mathbb{R}$. This allows us to select a finite number of points: $\xi_{i-1}=\xi_{i-1}^1<\xi_{i-1}^2<\cdots<\xi_{i-1}^r=\xi_i$ with $\xi_{i-1}^r-\xi_{i-1}^{r-1}<\eta$ in such way that $G^\delta(x,y) = G(\xi^r_{i-1},y) + \delta/2$ defined for $x \in (\xi^r_{i-1},\xi^{r+1}_{i-1})$ and $y \in \mathbb{R}$ satisfies $|\partial_y G^\delta(x,y) - \partial_y G(z,y)|\leq \delta$ in $(\xi_{i-1}^r,\xi_{i-1}^{r+1})\times\mathbb{R}$. Notice that this construction can be done for all $i=1,\dots,N$.
		In particular, if we rename all the constructed points $\xi_i^k$ by $0=z_0<z_1<\dots<z_m=1$, 
		for some $m = m(\delta)$, we get that $G^\delta(x,y)=G_i^\delta(y)$ for $(x,y)\in (z_{i-1},z_i)\times \mathbb{R}$ and $i=1,\dots,m$ 
		is a piecewise $C^1$-function which is $L$-periodic in the second variable $y$.
		
		Finally, we set $G^\delta_\varepsilon(x)=G^\delta(x,x/\varepsilon)$ considering the following domains 
		\begin{equation}
		\begin{gathered}
		R^{\varepsilon,\delta}=\{(x,y):x\in(0,1),0<y<\varepsilon G_\varepsilon^\delta(x) \}.
		\end{gathered}
		\end{equation}
		It follows from Theorem \ref{thmpiecewise} that, for each $\delta>0$ fixed, there exist $u^\delta\in W^{1,p}(0,1)$ and 
		$u_1^{i,\delta}\in  L^p((\xi_{i-1},\xi_i);W^{1,p}_\#(Y^*_i))$ in such way that the
		solutions $u_{\varepsilon,\delta}$ of \eqref{problem} in $R^{\varepsilon,\delta}$ satisfy 
		\begin{equation}\label{convepsilondelta}
		\left\lbrace \begin{array}{llll}
		\mathcal{T}^\delta_\varepsilon u_{\varepsilon,\delta}\to u^\delta\mbox{ strongly in } L^p((z_{i-1},z_i);W^{1,p}(Y^*_i)),\\
		\mathcal{T}^\delta_\varepsilon \left(\partial_x u_{\varepsilon,\delta}\right)\rightharpoonup \partial_xu^\delta+\partial_{y_1}u_1^{i,\delta}(x,y_1,y_2)\mbox{ weakly in } L^p\left((z_{i-1},z_i);W^{1,p}(Y^*_i)\right),\\
		\mathcal{T}^\delta_\varepsilon \left(\partial_y u_{\varepsilon,\delta}\right)\rightharpoonup \partial_{y_2}u_1^{i,\delta}(x,y_1,y_2)\mbox{ weakly in } L^p\left((z_{i-1},z_i);W^{1,p}(Y^*_i)\right),\\
		\mathcal{T}^\delta_\varepsilon(\left|\nabla u_{\varepsilon,\delta}\right|^{p-2}\nabla u_{\varepsilon,\delta})\rightharpoonup q^\delta a_p(\partial_x u^\delta)\mbox{ weakly in } L^p\left((z_{i-1},z_i)\times Y^*_i\right)^2. 
		\end{array}\right.
		\end{equation}
		Also, we have that $u^\delta$ is the solution of the Neumann problem
		\begin{equation} \label{quasihomogenized}
		\int_0^1 \left\lbrace q^\delta(x)|(u^\delta)'|^{p-2}(u^\delta) '\varphi' +r^\delta(x)\,|u^\delta|^{p-2}u^\delta\varphi \right\rbrace dx 
		=\int_0^1\hat{f}\varphi dx, \quad \forall \varphi\in W^{1,p}(0,1),
		\end{equation}
		with 
		\begin{equation}\label{pqdefinition}
		\begin{gathered}
		q^\delta(x)=\dfrac{1}{L}\sum_{i=1}^{N-1}\chi_{I_i}(x)\int_{Y^*_i}|\nabla v^i|^{p-2}\partial_{y_1} v^i \, dy_1dy_2
		\quad \textrm{ and } \quad 
		r^\delta(x)=\sum_{i=1}^{N-1}\chi_{I_i}(x)\dfrac{|Y^*_i|}{L}.
		\end{gathered}
		\end{equation}
		$\chi_{I_i}$ is the characteristic function of $(\xi_{i-1},\xi_i)$ and $v^i$ is the solution of \eqref{auxiliarproblem} in $Y^*_i$ which is given by
		\begin{equation*} 
		Y^*_{i}=\left\lbrace \right(y_1,y_2) \in \mathbb{R}^2 : 0<y_1<L\mbox{ and }0<y_2<G_{i}(y_1)\rbrace.
		\end{equation*}

		Now, we pass to the limit in \eqref{quasihomogenized} as $\delta\to 0$. From Lemma \ref{lemmaappendix}, we have the uniform convergence of $q^\delta$ and $r^\delta$ to $q$ and $r$ where
		\begin{equation}\label{pqdefinitionlimit}
		\begin{gathered}
		q(x)=\dfrac{1}{L}\int_{Y^*(x)}|\nabla v|^{p-2}\partial_{y_1} v \, dy_1dy_2 
		\quad \textrm{ and } \quad 
		r(x)= \dfrac{|Y^*(x)|}{L}.
		\end{gathered}
		\end{equation}
		
		Notice that $q(x)>0$.
		Furthermore, we have the solutions $u^\delta \in W^{1,p}(0,1)$ of \eqref{quasihomogenized} are uniformly bounded in $\delta$. 
		Thus, there exists $u^*\in W^{1,p}(0,1)$ such that 
		$u^\delta\rightharpoonup u^*$ weakly in $W^{1,p}(0,1)\mbox{ and strongly in }L^p(0,1)$.
		Indeed, we have the strong convergence 
		\begin{equation}\label{convfinalthm}
		u^\delta\to u^*\mbox{ in }W^{1,p}(0,1).
		\end{equation}	
		
		To prove this, we use the following norm
		$$
		\|\cdot\|_{L^p_\delta(0,1)}^p=\int_0^1q^\delta |\cdot|^p dx.
		$$
		By Proposition \ref{proposicaoplaplaciano} and \eqref{quasihomogenized}, we get for $\varphi=u^\delta-u^*$ and $p>2$ that 
		\begin{eqnarray*}
			\|(u^\delta)'-(u^*)'\|_{L^p_\delta(0,1)}^p&\leq& c\int_0^1q^\delta \left[a_p\left((u^\delta)'\right)-a_p\left((u^*)'\right)\right]\left[(u^\delta)'-(u^*)'\right]dx\\
			&=&c\int_0^1 (\hat{f}-a_p(u^\delta))(u^\delta-u^*)dx-c\int_0^1q^\delta a_p\left((u^*)'\right)\left[(u^\delta)'-(u^*)'\right]dx\\
			&\to& 0.
		\end{eqnarray*}
		Hence, using the equivalence of norms, we get
		$$
		\|(u^\delta)'-(u^*)'\|_{L^p(0,1)}\leq \|(u^\delta)'-(u^*)'\|_{L^p_\delta(0,1)}\to0,
		$$
		as $\delta\to0$, which implies \eqref{convfinalthm}. Thus, we have that $u^* \in W^{1,p}(0,1)$ satisfies 
		\begin{equation}  \label{homogenizedlimitproof}
		\int_0^1 \left\lbrace q(x)|(u^*)'|^{p-2}(u^*)'\varphi' +r(x)\,|u^*|^{p-2}u^*\varphi \right\rbrace dx=\int_0^1\hat{f}\varphi dx,
		\end{equation}
		for all $\varphi\in W^{1,p}(0,1)$ and for $p\geq 2$. For $1<p<2$, we use similar arguments as in the proof of Theorem \ref{thmpiecewise} when we obtained \eqref{correctorthm0001}.

		To finish the proof, we need to show that $u^*=u_0$ in $(0,1)$ where $u_0$ is given in \eqref{1160}.
		
		Let $\eta$ be a positive small number and let $\varphi\in C_0^\infty(0,1)$. Notice that
		\begin{equation}\label{ineqthmfinal}
		\begin{gathered}
		\int_0^1(u_0-u^*)\varphi dx=\int_0^1\left(u_0-\dfrac{L}{|Y^*(x)|\varepsilon}\int_0^{\varepsilon G_\varepsilon(x)} u_\varepsilon(x,y) dy\right)\varphi(x) dx\\
		+\int_0^1\left(\dfrac{L}{|Y^*(x)|\varepsilon}\int_0^{\varepsilon G_\varepsilon(x)} u_\varepsilon(x,y) -P_{1+\delta/G_0}u_{\varepsilon,\delta}(x,y)dy\right)\varphi(x) dx\\
		+\int_0^1\left(\dfrac{L}{|Y^*(x)|\varepsilon}\int_0^{\varepsilon G_\varepsilon(x)} P_{1+\delta/G_0}u_{\varepsilon,\delta}(x,y)-u^
		\delta(x) dy\right)\varphi(x) dx\\
		+\int_0^1\left(\dfrac{L}{|Y^*(x)|\varepsilon}\int_0^{\varepsilon G_\varepsilon(x)} u^
		\delta(x)-u^*(x) dy\right)\varphi(x) dx,
		\end{gathered}
		\end{equation}
		where $P_{1+\delta/G_0}$ is the operator defined in  \eqref{operatorcomparison}.
		
		Now, due to definition \eqref{operatorcomparison}, notation \eqref{rescaleddomain} and an appropriated  change of variables, we get 
		\begin{equation*}
		\begin{gathered}
		\int_0^1\left(\dfrac{L}{\varepsilon}\int_0^{\varepsilon G_\varepsilon(x)} P_{1+\delta/G_0}u_{\varepsilon,\delta}(x,y)-u^
		\delta(x) dy\right)\varphi(x) dx\leq c |||P_{1+\delta/G_0}u_{\varepsilon,\delta}-u^
		\delta|||_{L^p(R^\varepsilon)}\\
		\leq c|||P_{1+\delta/G_0}u_{\varepsilon,\delta}-u^
		\delta|||_{L^p(R^{\varepsilon,\delta}(1+\delta))}= c|||u_{\varepsilon,\delta}-u^
		\delta|||_{L^p(R^{\varepsilon,\delta})}.
		\end{gathered}
		\end{equation*}
		Thus, we can rewrite \eqref{ineqthmfinal} as
		$$
		\begin{gathered}
		\left|\int_0^1(u_0-u^*)\varphi dx\right|\leq\left|\int_0^1\left(u_0-\dfrac{L}{\varepsilon}\int_0^{\varepsilon G_\varepsilon(x)} u_\varepsilon(x,y) dy\right)\varphi(x) dx\right|\\
		+c|||u_\varepsilon-P_{1+\delta/G_0}u_{\varepsilon,\delta}|||_{L^p(R^\varepsilon)}+c|||u_{\varepsilon,\delta}-u^
		\delta|||_{L^p(R^{\varepsilon,\delta})}+c\|u^\delta-u^*\|_{L^p(0,1)}.
		\end{gathered}
		$$
		
		From \eqref{convepsilondelta} and Remark \ref{corollarycomparison}, we can take $\delta>0$ small enough such that $|||u_\varepsilon-P_{1+\delta/G_0}u_{\varepsilon,\delta}|||_{L^p(R^\varepsilon)}\leq\eta$ and $|||u_{\varepsilon,\delta}-u^\delta|||_{L^p(R^{\varepsilon,\delta})}\leq \eta$ uniformly in $\varepsilon>0$.
		Also, from \eqref{convfinalthm}, we can choose $\varepsilon_1>0$ such that $|||u^*-u^\delta|||_{L^p(0,1)}\leq\eta$ for $0<\varepsilon<\varepsilon_1$. 
		
		Moreover, from \eqref{1160} and Proposition \ref{propLPunfolfingconv}, we have
		$$
		\int_0^1\left(u_0-\dfrac{L}{|Y^*(x)|\varepsilon}\int_0^{\varepsilon G_\varepsilon(x)} u_\varepsilon(x,y) dy\right)\varphi(x) dx\to 0,\quad\mbox{as}\quad \varepsilon\to 0. 
		$$
		Therefore, there exists $\varepsilon_2>0$ such that 
		$$
		\left|\int_0^1\left(u_0-\dfrac{L}{|Y^*(x)|\varepsilon}\int_0^{\varepsilon G_\varepsilon(x)} u_\varepsilon(x,y) dy\right)\varphi(x) dx\right|\leq \eta
		$$ 
		whenever $0<\varepsilon<\varepsilon_2$. Hence, setting $\varepsilon=\min\{\varepsilon_1,\varepsilon_2 \}$ we get 
		$$
		\left|\int_0^1(u_0-u^*)\varphi dx \right|\leq 4\eta
		$$ 
		Since $\varphi$ and $\eta$ are arbitrarily, we conclude that $u^*=u_0$. 
	\end{proof}

\section{Appendix}

In the proof of the main result, we used $q^\delta\to q$ uniformly to obtain \eqref{homogenizedlimitproof}. Recall that $q^\delta$ and $q$ are given by \eqref{pqdefinition} and \eqref{pqdefinitionlimit} respectively. Here we prove such convergence. For this sake, let us first set 
\begin{equation}\label{setAM}
A(M)=\left\lbrace G\in C^1(\mathbb{R}):\, G \mbox{ is }L-\mbox{periodic, }0<G_0\leq G(\cdot)\leq G_1 \textrm{ with } |G'(s)|\leq M\right\rbrace.
\end{equation}
Hence, for any $\bar G \in A(M)$, we can consider the  problem
\begin{equation}\label{problemNappendix}
%\left\lbrace\begin{array}{ll}
%-\Delta_p \bar v=0\mbox{ in }Y^*_{\bar G}\\
%|\nabla \bar v|^{p-2}\nabla \bar v \eta =0\mbox{ on }\partial Y^*_{\bar G},
%\end{array}\right.
\int_{Y^*_{\bar G}}|\nabla\bar v|^{p-2}\nabla\bar v\nabla\varphi dy_1dy_2=0,\quad\forall\varphi\in  W^{1,p}_{\#,0}(Y^*_{\bar G}) 
\end{equation}
where $W^{1,p}_{\#,0}(Y^*_{\bar G})$ is the space of functions $W^{1,p}_{\#}(Y^*_{\bar G})$ with zero average, 
$$
Y^*_{\bar G}=\left\lbrace(y_1,y_2)\in\mathbb{R}^2:0<y_1<L,0<y_2<\bar G(y_2)\right\rbrace
$$
and we are looking for solutions $\bar v$ such that $(\bar v-y_1)\in W^{1,p}_{\#,0}(Y^*_{\bar G})$.

Now, for any $\bar G$, $G \in A(M)$, let us consider the following transformation
\begin{eqnarray*}
	\begin{array}{ccccl}
		L&:&Y^*_{G}&\mapsto& Y^*_{\bar G}\\
		&&(z_1,z_2)&\to &(z_1,F(z_1)z_2)=(y_1,y_2)
	\end{array}
\end{eqnarray*}
where $$F=\dfrac{\bar G}{G}.$$ 

The Jacobian matrix for $L$ is 
$$
JL(z_1,z_2)=\left(\begin{array}{cc}
1&0\\
F'(z_1)z_2&F(z_1)
\end{array}\right)
$$
with ${\rm det}(JL) = F$. 
Also, we can consider
$$
\begin{gathered}
\mathcal{L} \nabla U =\left(\begin{array}{cc}
1 & -\dfrac{F'}{F}z_2\\
0 & 1/F
\end{array}\right)\nabla U= \left(\partial_{z_1}U-\dfrac{F'}{F}z_2\partial_{z_2}U,\dfrac{1}{F}\partial_{z_2}U\right) \quad \textrm{ and } 
\\\mathcal{B}\nabla  U=\left(\partial_{z_1}U+\dfrac{F'z_2}{F}\partial_{z_2}U,- \dfrac{F'z_2}{F}\partial_{z_1}U +\dfrac{1}{F^{2}}\left[1+(z_2 F')^2\right]\partial_{z_2}U\right).
\end{gathered}
$$
It is not difficult to see that $\mathcal{B}=\mathcal{L}^T\mathcal{L}$.

Then, we can use the change of variables given by $L$ to rewrite \eqref{problemNappendix} in the region $Y^*_{G}$ as 
%\begin{equation}\label{problemNappendixchanged}
%\left\lbrace\begin{array}{ll}
%-\dfrac{1}{F}\mbox{div}\left(|\mathcal{L}\nabla \bar v|^{p-2}\mathcal{B}\nabla \bar v\right)=0\mbox{ in }Y^*_{G},\\
%|\mathcal{L}\nabla \bar v|^{p-2}\mathcal{B}\nabla \bar v \nu =0\mbox{ on }\partial Y^*_{G},
%\end{array}\right.
%\end{equation}
%that has the following variational formulation
\begin{equation}\label{problemNappendixchangedvariational}
\int_{Y^*_G}|\mathcal{L}\nabla \bar v|^{p-2}\mathcal{L}\nabla \bar v \mathcal{L}\nabla \left(\dfrac{\varphi}{F}\right) F\, dz_1dz_2=0\,, \forall \varphi\in W^{1,p}_{\#,0}(Y^*_{G}).
\end{equation}
Notice that this problem still has unique solution $\bar v\in W^{1,p}(Y^*_G)$ with $(\bar v-z_1)\in W^{1,p}_{\#,0}(Y^*_{G})$ by Minty-Browder's Theorem.

By the coercivity of \eqref{problemNappendixchangedvariational}, we get
$$
\begin{gathered}
\|\nabla \bar v\|_{L^p(Y^*_G)}^p\leq \int_{Y^*_G}|\mathcal{L}\nabla \bar v|^{p-2}\mathcal{L}\nabla \bar v \mathcal{L}\nabla \left(\dfrac{\bar v}{F}\right) F\, dz_1dz_2\\
=-\int_{Y^*_G}|\mathcal{L}\nabla \bar v|^{p-2}\mathcal{L}\nabla \bar v \mathcal{L}\nabla \left(\dfrac{z_1}{F}\right) F\, dz_1dz_2\\
\leq c \|\mathcal{L}\nabla \bar v\|_{L^p(Y^*_G)}^{p-1} \leq c\|\nabla \bar v\|_{L^p(Y^*_G)}^{p-1},
\end{gathered}
$$
which means that the solutions are uniformly bounded by a constant independent on $\bar G$ and $G$. 

%{\color{red} 
Now, let us compare the solutions of \eqref{problemNappendix} for $\bar G = G$ and \eqref{problemNappendixchangedvariational}. We need to analyze
\begin{equation}\label{eqmain}
\begin{gathered}
\int_{Y^*_G}\left[|\mathcal{L}\nabla \bar v|^{p-2}\mathcal{L}\nabla \bar v-|\nabla v|^{p-2}\nabla v\right](\mathcal{L}\nabla \bar v-\nabla v) dz_1dz_2\\
=\int_{Y^*_G}\left[|\mathcal{L}\nabla \bar v|^{p-2}\mathcal{L}\nabla \bar v-|\nabla v|^{p-2}\nabla v\right](\mathcal{L}\nabla \bar v-(1,0)+(1,0)-\nabla v) dz_1dz_2.
\end{gathered}
\end{equation}
Notice that $\mathcal{L}(1,0)=(1,0)$.
We will distribute the terms finding estimative for each one.

First, observe that for any test function $\varphi\in W^{1,p}_{\#,0}(Y^*_G)$ in \eqref{problemNappendixchangedvariational}, we have
\begin{equation}\label{eq001}
\int_{Y^*_G}|\mathcal{L}\nabla \bar v|^{p-2}\mathcal{L}\nabla \bar v\mathcal{L}\nabla \varphi dz_1dz_2 
= \int_{Y^*_G}|\mathcal{L}\nabla \bar v|^{p-2}\mathcal{L}\nabla \bar v\varphi\left(\dfrac{F'}{F},0\right) dz_1dz_2.
\end{equation}
Now, take $\varphi=(\bar v-z_1)$ in \eqref{eq001}. Then,
\begin{equation}\label{eq01}
\begin{gathered}
\int_{Y^*_G}|\mathcal{L}\nabla \bar v|^{p-2}\mathcal{L}\nabla \bar v\mathcal{L}\nabla (\bar v-z_1) dz_1dz_2=\int_{Y^*_G}|\mathcal{L}\nabla \bar v|^{p-2}\mathcal{L}\nabla \bar v(\bar v-z_1)\left(\dfrac{F'}{F},0\right) dz_1dz_2
\end{gathered}
\end{equation}

On the other side, we can compute 
\begin{equation}\label{eq02}
\begin{gathered}
\int_{Y^*_G}|\mathcal{L}\nabla \bar v|^{p-2}\mathcal{L}\nabla \bar v ((1,0)-\nabla v) dz_1dz_2\\=\int_{Y^*_G}|\mathcal{L}\nabla \bar v|^{p-2}\mathcal{L}\nabla \bar v ((1,0)-\nabla v+\mathcal{L}\nabla v-(1,0)+(1,0)-\mathcal{L}\nabla v) dz_1dz_2\\
=\int_{Y^*_G}|\mathcal{L}\nabla \bar v|^{p-2}\mathcal{L}\nabla \bar v (-\nabla v+\mathcal{L}\nabla v) dz_1dz_2+\int_{Y^*_G}|\mathcal{L}\nabla \bar v|^{p-2}\mathcal{L}\nabla \bar v \mathcal{L}\nabla (z_1 - v) dz_1dz_2\\
=-\int_{Y^*_G}|\mathcal{L}\nabla \bar v|^{p-2}\mathcal{L}\nabla \bar v (\mathcal{L}-I)\nabla v dz_1dz_2+\int_{Y^*_G}|\mathcal{L}\nabla \bar v|^{p-2}\mathcal{L}\nabla \bar v(z_1-v)\left(\dfrac{F'}{F},0\right) dz_1dz_2
\end{gathered}
\end{equation}
by \eqref{eq001} with $\varphi=(z_1-v)$. 

Next, take $(\bar v -z_1)\in W^{1,p}_{\#,0}(Y^*_G)$ as a test function in \eqref{problemNappendix}. Then, 
\begin{equation}\label{eq03}
\int_{Y^*_G}|\nabla v|^{p-2}\nabla v(\nabla\bar v -(1,0)) dz_1dz_2=0.
\end{equation}
Finally, due to \eqref{eq03}, we have 
\begin{equation}\label{eq04}
\begin{gathered}
\int_{Y^*_G}|\nabla v|^{p-2}\nabla v(\mathcal{L}\nabla \bar v -(1,0)) dz_1dz_2\\=\int_{Y^*_G}|\nabla v|^{p-2}\nabla v(\mathcal{L}\nabla \bar v -(1,0)) dz_1dz_2- \int_{Y^*_G}|\nabla v|^{p-2}\nabla v(\nabla \bar v -(1,0)) dz_1dz_2\\
=\int_{Y^*_G}|\nabla v|^{p-2}\nabla v(\mathcal{L}-I)\nabla\bar v dz_1dz_2.
\end{gathered}
\end{equation}

Hence, putting together \eqref{eqmain}, \eqref{eq01}, \eqref{eq02}, \eqref{eq03} and \eqref{eq04}, we obtain
\begin{equation}\label{eq002}
\begin{gathered}
\int_{Y^*_G}\left[|\mathcal{L}\nabla \bar v|^{p-2}\mathcal{L}\nabla \bar v-|\nabla v|^{p-2}\nabla v\right](\mathcal{L}\nabla \bar v-\nabla v) dz_1dz_2
\\=\int_{Y^*_G}|\mathcal{L}\nabla \bar v|^{p-2}\mathcal{L}\nabla \bar v(\bar v-z_1)\left(\dfrac{F'}{F},0\right) dz_1dz_2\\
-\int_{Y^*_G}|\mathcal{L}\nabla \bar v|^{p-2}\mathcal{L}\nabla \bar v (\mathcal{L}-I)\nabla v dz_1dz_2+\int_{Y^*_G}|\mathcal{L}\nabla \bar v|^{p-2}\mathcal{L}\nabla \bar v(z_1-v)\left(\dfrac{F'}{F},0\right) dz_1dz_2\\
- \int_{Y^*_G}|\nabla v|^{p-2}\nabla v(\mathcal{L}-I)\nabla\bar v dz_1dz_2.
\end{gathered}
\end{equation}

Now, one can apply H\"older and Poincar\'e-Wirtinger's inequalities in \eqref{eq002} to obtain
\begin{equation}\label{eq003}
\begin{gathered}
\int_{Y^*_G}\left[|\mathcal{L}\nabla \bar v|^{p-2}\mathcal{L}\nabla \bar v-|\nabla v|^{p-2}\nabla v\right](\mathcal{L}\nabla \bar v-\nabla v) dz_1dz_2\\
\leq \|\mathcal{L}\nabla \bar v\|_{L^p(Y^*_G)}^{p-1}\|\nabla \bar v\|_{L^p(Y^*_G)}\left\|\dfrac{F'}{F}\right\|_{L^\infty }+\|\mathcal{L}\nabla \bar v\|_{L^p(Y^*_G)}^{p-1}\left\|\mathcal{L}-I\right\|_{L^\infty }\|\nabla v \|_{L^p(Y^*_G)}\\
+\|\mathcal{L}\nabla \bar v\|_{L^p(Y^*_G)}^{p-1}\|\nabla   v\|_{L^p(Y^*_G)}\left\|\dfrac{F'}{F}\right\|_{L^\infty }+\|\nabla  v\|_{L^p(Y^*_G)}^{p-1}\left\|\mathcal{L}-I\right\|_{L^\infty }\|\nabla\bar v \|_{L^p(Y^*_G)}.
\end{gathered}
\end{equation}

Note that
\begin{equation}\label{estimF}
\begin{gathered}
\left\|\dfrac{F'}{F}\right\|_{L^\infty}\leq c \|\bar{G}-G\|_{C^1}\mbox{ and }\|\mathcal{L}-I\|_{L^\infty}\leq c \|\bar{G}-G\|_{C^1}.
\end{gathered}
\end{equation}
Also, $\|\nabla  v\|_{L^p(Y^*_G)}$, $\|\nabla \bar v\|_{L^p(Y^*_G)}$, $\|\mathcal{L}\nabla \bar v\|_{L^p(Y^*_G)}$ and $\|\mathcal{L}\nabla v\|_{L^p(Y^*_G)}$ are uniformly bounded. Thus, by \eqref{eq003}
\begin{equation}\label{ineqapp}
\int_{Y^*_G}\left[|\mathcal{L}\nabla \bar v|^{p-2}\mathcal{L}\nabla \bar v-|\nabla v|^{p-2}\nabla v\right](\mathcal{L}\nabla \bar v-\nabla v) dz_1dz_2\leq c \|\bar{G}-G\|_{C^1}.
\end{equation}

If $p\geq 2$, we get from Proposition \ref{proposicaoplaplaciano} and \eqref{ineqapp} that 
\begin{equation*} 
\begin{gathered}
\|\mathcal{L}\nabla \bar v-\nabla v\|^p_{L^p(Y^*_G)}\leq c\int_{Y^*_G}\left[|\mathcal{L}\nabla \bar v|^{p-2}\mathcal{L}\nabla \bar v-|\nabla v|^{p-2}\nabla v\right](\mathcal{L}\nabla \bar v-\nabla v) dz_1dz_2\\
\leq c \|\bar{G}-G\|_{C^1}.
\end{gathered}
\end{equation*}
On the other side, if $1<p<2$, we get from H\"older's inequality, Proposition \ref{proposicaoplaplaciano} and \eqref{ineqapp}, that
\begin{equation*}
\begin{gathered}
\|\mathcal{L}\nabla \bar v-\nabla v\|^p_{L^p(Y^*_G)}\leq c\left\lbrace\int_{Y^*_G}\left[|\mathcal{L}\nabla \bar v|^{p-2}\mathcal{L}\nabla \bar v-|\nabla v|^{p-2}\nabla v\right](\mathcal{L}\nabla \bar v-\nabla v) dz_1dz_2\right\rbrace^{p/2}\\
\qquad\qquad\qquad\qquad\qquad\left[\int_{Y^*_G}(1+|\mathcal{L}\nabla \bar{v}|+|\nabla v|)^p\right]^{(2-p)/2}\\
\leq c \|\bar{G}-G\|_{C^1}^{p/2},
\end{gathered}
\end{equation*}

Therefore, for $1<p<\infty$, we have 
\begin{equation}\label{ineqmainapp}
\|\mathcal{L}\nabla \bar v-\nabla v\|_{L^p(Y^*_G)}\leq c \|\bar{G}-G\|_{C^1}^{\alpha}
\end{equation}
where $\alpha=1/2$ if $1<p<2$ and $\alpha=1/p$ if $p\geq 2$.

Finally, since
\begin{equation*}
\begin{gathered}
\|\nabla \bar{v}-\nabla v\|_{L^p(Y^*_G)}\leq\|\mathcal{L}\nabla \bar v-\nabla \bar v\|_{L^p(Y^*_G)}+ \|\mathcal{L}\nabla \bar v-\nabla v\|_{L^p(Y^*_G)},
\end{gathered}
\end{equation*}
we conclude by \eqref{ineqmainapp} and \eqref{estimF} that
\begin{equation*}
\begin{gathered}
\|\nabla \bar{v}-\nabla v\|_{L^p(Y^*_G)}
\leq c \|\bar{G}-G\|_{C^1}+c \|\bar{G}-G\|_{C^1}^{\alpha}.
\end{gathered}
\end{equation*}
%}
We have the following lemma:

\begin{lemma}\label{lemmaappendix}
	Let us consider the family of admissible functions $G\in A(M)$ for some constant $M>0$ where $A(M)$ is defined by \eqref{setAM}.
	
	Then, for each $\varepsilon>0$, there exists $\delta>0$ such that if $G,\bar{G}\in A(M)$ with $\|\bar{G}-G\|\leq \delta$, then
	$$
	\|\nabla \bar{v}-\nabla v\|_{L^p(Y^*_G)}\leq c(\varepsilon+\varepsilon^{\alpha}),
	$$
	where $\alpha=1/2$ if $1<p<2$ and $\alpha=1/p$ if $p\geq 2$ and $c$ is a constant which depends only on $p,G_{0},G_{1}$. 
	In particular, we have that 
	$$
	|q(\bar{G})-q(G)|\leq c(\varepsilon+\varepsilon^{\alpha}),
	$$
	where
	$$
	q(\bar G)=\int_{Y^*_{\bar{G}}}|\nabla \bar{v}|^{p-2}\partial_{y_1}\bar{v} dy_1dy_2
	$$
	and $\bar{v}$ is the solution of \eqref{problemNappendix} in the region $Y^*_{\bar{G}}$ set by $\bar G$.
\end{lemma}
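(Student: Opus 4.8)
The first conclusion is, in fact, already contained in the chain of estimates carried out above: putting together the identity \eqref{eq002}, the H\"older/Poincar\'e-Wirtinger bound \eqref{eq003}, the elementary estimates \eqref{estimF}, the monotonicity inequalities of Proposition \ref{proposicaoplaplaciano} (which convert the control \eqref{ineqapp} of the monotonicity defect into \eqref{ineqmainapp}), and the triangle inequality $\|\nabla\bar v-\nabla v\|\le\|\mathcal L\nabla\bar v-\nabla\bar v\|+\|\mathcal L\nabla\bar v-\nabla v\|$, one obtains
\[
\|\nabla\bar v-\nabla v\|_{L^p(Y^*_G)}\le c\,\|\bar G-G\|_{C^1}+c\,\|\bar G-G\|_{C^1}^{\alpha}.
\]
Hence the plan for the first part is simply to take $\delta=\varepsilon$, reading the norm $\|\cdot\|$ in the statement as the $C^1$-norm — which is the norm in which the piecewise periodic functions $G^\delta$ of the proof of Theorem \ref{mainthm} are constructed to be $\delta$-close to $G$. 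The single point to be checked is that every constant produced along \eqref{eq002}--\eqref{ineqmainapp} is uniform over the class $A(M)$, and this follows from $G_0\le G,\bar G\le G_1$, $|G'|,|\bar G'|\le M$, the uniform $L^p(Y^*_G)$-bounds on $\nabla v$, $\nabla\bar v$, $\mathcal L\nabla\bar v$, $\mathcal L\nabla v$ recorded above, and \eqref{estimF}.

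For the estimate on $q$, I would not compare the integrands $|\nabla v|^{p-2}\partial_{y_1}v$ directly, but rather pass to the energy form of the coefficient. Testing \eqref{problemNappendix} with the admissible function $\bar v-y_1$ (respectively with $v-z_1$) gives, exactly as in the last display of the proof of Theorem \ref{thmpiecewise},
\[
q(\bar G)=\int_{Y^*_{\bar G}}|\nabla\bar v|^p\,dy_1dy_2,\qquad q(G)=\int_{Y^*_G}|\nabla v|^p\,dz_1dz_2 .
\]
Applying the change of variables $L$ to the first integral, $q(\bar G)=\int_{Y^*_G}|\mathcal L\nabla\bar v|^p\,F\,dz_1dz_2$, where now $\bar v$ denotes the pulled-back corrector on $Y^*_G$, so that
\[
q(\bar G)-q(G)=\int_{Y^*_G}F\bigl(|\mathcal L\nabla\bar v|^p-|\nabla v|^p\bigr)\,dz_1dz_2+\int_{Y^*_G}(F-1)\,|\nabla v|^p\,dz_1dz_2 .
\]
The first term I would bound using $\bigl|\,|a|^p-|b|^p\,\bigr|\le p\,(|a|+|b|)^{p-1}|a-b|$, H\"older's inequality with exponents $p'$ and $p$, the uniform bounds on $\mathcal L\nabla\bar v$ and $\nabla v$, and \eqref{ineqmainapp}, which yields a bound of size $c\,\|\mathcal L\nabla\bar v-\nabla v\|_{L^p(Y^*_G)}\le c\,\|\bar G-G\|_{C^1}^{\alpha}$; the second term is controlled by $|F-1|\le G_0^{-1}\|\bar G-G\|_{L^\infty}$ together with the uniform bound on $\|\nabla v\|_{L^p(Y^*_G)}$, giving a bound of size $c\,\|\bar G-G\|_{C^1}$. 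Taking $\delta=\varepsilon$ then gives $|q(\bar G)-q(G)|\le c(\varepsilon+\varepsilon^\alpha)$.

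The real computational work — the identity \eqref{eq002} and the bound \eqref{ineqapp} of the monotonicity defect by $\|\bar G-G\|_{C^1}$ — has already been carried out before the statement, so the lemma itself is mainly bookkeeping plus attention to uniformity. The one genuinely delicate step, hidden in that preparation, is the passage \eqref{ineqapp}$\to$\eqref{ineqmainapp}, where the two regimes $p\ge2$ and $1<p<2$ diverge: in the subquadratic range one must insert the weight $(1+|\mathcal L\nabla\bar v|+|\nabla v|)^{(p-2)p/2}$ and apply H\"older with exponent $2/p$, exactly as in \eqref{aob2} of the proof of Theorem \ref{thmpiecewise}, which is what degrades the exponent to $\alpha=1/2$. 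The change of variables, the uniform a priori bounds on the correctors, and the dependence of all constants only on $p$, $G_0$, $G_1$ (and $M$) are then routine.
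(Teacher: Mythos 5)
Your proof is correct and, for the gradient estimate, follows exactly the paper's own route: the authors carry out the identity \eqref{eq002}, the bounds \eqref{eq003}--\eqref{ineqapp}, the two-regime monotonicity argument giving \eqref{ineqmainapp}, and the final triangle inequality \emph{before} stating the lemma, so the first conclusion is indeed just the choice $\delta=\varepsilon$ with $\|\cdot\|$ read as the $C^1$-norm (the norm in which $G^\delta$ is constructed close to $G$ in the proof of Theorem \ref{mainthm}). For the estimate on $q$ the paper offers no argument at all, and your derivation --- rewriting $q$ as the $p$-energy of the corrector via the test function $\bar v-y_1$, pulling back by $L$, splitting off the $(F-1)$ term, and using $\bigl||a|^p-|b|^p\bigr|\le p(|a|+|b|)^{p-1}|a-b|$ together with \eqref{ineqmainapp} --- is a correct and complete way to fill that gap; your remark that the constants also depend on $M$ (through \eqref{estimF}) is accurate and slightly more careful than the statement of the lemma itself.
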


\end{document}